\documentclass[a4paper,12pt, reqno]{amsart}

\usepackage[margin=18mm]{geometry}
\usepackage{amssymb, amsmath, amsthm, amscd}

\usepackage[T1]{fontenc}
\usepackage{eucal,mathrsfs,dsfont}
\usepackage{color}
\usepackage{mathtools}
\usepackage{hyperref}
\usepackage{mathrsfs}
\usepackage{amsthm}
\usepackage{calc}
\usepackage{xcolor}

\usepackage{hyperref}


\newcommand{\bk}{\color{black}}

\renewcommand{\leq}{\leqslant}

\renewcommand{\le}{\leqslant}
\renewcommand{\ge}{\geqslant}

\def\wt{\widetilde}

\definecolor{mno}{rgb}{0.5,0.1,0.5}

\newcommand{\R}{\mathds R}

\newcommand{\Pp}{\mathds P}
\newcommand{\Ee}{\mathds E}

\newcommand{\I}{\mathds 1}

\newtheorem{theorem}{Theorem}[section]
\newtheorem{lemma}[theorem]{Lemma}
\newtheorem{proposition}[theorem]{Proposition}

\theoremstyle{definition}
\newtheorem{definition}[theorem]{Definition}
\newtheorem{example}[theorem]{Example}
\newtheorem{remark}[theorem]{Remark}

\newcommand{\ac}[1]{\thelemma.{#1}}
\numberwithin{equation}{section}

\begin{document}
\allowdisplaybreaks
\title[symmetric stable processes on horn-shaped regions] {\bfseries
Two-sided Dirichlet heat estimates of symmetric stable processes on horn-shaped regions}
\author{Xin Chen\qquad Panki Kim \qquad Jian Wang}
\thanks{\emph{X.\ Chen:}
   Department of Mathematics, Shanghai Jiao Tong University, 200240 Shanghai, P.R. China. \texttt{chenxin217@sjtu.edu.cn}}
\thanks{\emph{P.\ Kim:}
 Department of Mathematical Sciences and Research Institute of Mathematics, Seoul National University,
Seoul 08826, South Korea.
\texttt{pkim@snu.ac.kr}}
  \thanks{\emph{J.\ Wang:}
   School of Mathematics and Statistics  \&  Fujian Key Laboratory of Mathematical
Analysis and Applications (FJKLMAA) \&  Center for Applied Mathematics of Fujian Province (FJNU), Fujian Normal University, 350007 Fuzhou, P.R. China. \texttt{jianwang@fjnu.edu.cn}}

\date{}

\maketitle

\begin{abstract} In this paper,
we consider  symmetric
$\alpha$-stable processes on (unbounded) horn-shaped regions which are
non-uniformly $C^{1,1}$ near infinity.
By using probabilistic approaches extensively,
we establish two-sided Dirichlet heat estimates
of
such processes for all time.
  The estimates are very sensitive with respect to the reference function
   corresponding to each horn-shaped region. Our results also cover the case that the associated Dirichlet semigroup is not intrinsically ultracontractive.
A striking observation  from our estimates is that, even when
the associated Dirichlet semigroup is  intrinsically ultracontractive,
 the so-called Varopoulos-type estimates do not hold
  for symmetric stable processes on horn-shaped regions.

  \medskip

\noindent\textbf{Keywords:} Dirichlet heat kernel; fractional
Laplacian; horn-shaped region; L\'evy system
\medskip

\noindent \textbf{MSC
2020:}
60G51; 60G52; 60J25;
60J76.
\end{abstract}
\allowdisplaybreaks

\section{Background and main results}\label{section1}

Dirichlet heat kernel is the fundamental solution of the heat
equation with zero exterior conditions, which plays an important
role in the study of Cauchy or Poisson problems with Dirichlet
conditions. While the research on estimates and properties for the Dirichlet
heat kernel of the Laplacian has a long history and fruitful results
(see \cite{GS} and the references therein),
the corresponding work for the fractional Laplacian or more general non-local operators  was powerfully attracted  and extendedly
developed in recent few years.

\smallskip

Let $\Delta^{\alpha/2}:=-(-\Delta)^{\alpha/2}$ be the fractional Laplacian on $\R^d$ with $\alpha\in (0,2)$, which is the infinitesimal generator of the
 (rotationally) symmetric $\alpha$-stable process $X:=\{X_t,t\ge0; \Pp^x,x\in \R^d\}$. The fractional Laplacian $\Delta^{\alpha/2}$
 is a non-local operator and
can be written in the form
\begin{equation}\label{e1-0}
\Delta^{\alpha/2}f(x)=\lim_{\varepsilon \to0}\int_{\{|y-x|\ge \varepsilon\}}(f(y)-f(x))\frac{c_{d,\alpha}}{|y-x|^{d+\alpha}}\,dy,\quad f\in C_c^\infty(\R^d),
\end{equation}
where
$c_{d,\alpha}$ is a positive constant depending
only on $d$ and $\alpha$, and $C_c^\infty(\R^d)$ is the space of smooth functions with compact support in $\R^d$.
Throughout this paper, we denote by $p(t,x,y)$ the heat kernel of the fractional Laplacian $\Delta^{\alpha/2}$ (or equivalently the transition density function of the symmetric $\alpha$-stable
process $X$) on $\R^d$. It is well known (e.g. see \cite{BG, CK}) that $$p(t,x,y)\simeq
t^{-d/\alpha}\wedge \frac{t}{|x-y|^{d+\alpha}} \quad \text{ for all } (t,x,y) \in (0, \infty) \times \R^d \times \R^d.$$
Here and below, we denote $a \wedge b:=\min\{a, b\}$ and $f\simeq g$ if the quotient $f/g$ remains bounded between two positive constants.

\smallskip

For every open subset $D\subset \R^d$, we denote by $X^D$ the subprocess of $X$ killed
upon leaving $D$. The infinitesimal generator of $X^D$ is the Dirichlet fractional Laplacian
$\Delta^{\alpha/2}|_D$ (the fractional Laplacian with zero exterior condition). It is known (see \cite{CS98}) that $X^D$ has
the transition density $p_D(t,x,y)$ with respect to the Lebesgue measure (which is called the Dirichlet heat kernel) that is
jointly continuous on $(0,\infty)\times D\times D$. The first breakthrough on two-sided estimates of the transition density for the Dirichlet fractional Laplacian (which we will call Dirichlet heat kernel estimates later) was done by the second named author jointly with Zhen-Qing Chen and Renming Song in \cite{CKS1}.

\smallskip

To state the main results in \cite{CKS1} explicitly, we first recall the definition of
uniform
$C^{1,1}$ open set.
An open
set $D$ in $\R^d$ with $d\ge 2$ is said to be
$C^{1,1}$ at $z \in \partial D$, if there are  a localization radius $R>0$ and
a constant $\Lambda>0$ (both of them may depend on $z\in D$) such that there
exist a $C^{1,1}$-function $\psi:=\psi_z: \R^{d-1}\to \R$
satisfying
$\psi (0, \dots, 0)= 0$, $\nabla \psi (0, \dots, 0)=(0, \dots, 0)$,
$\| \nabla \psi  \|_\infty \leq \Lambda$ and
$| \nabla \psi (x)-\nabla \psi (y)| \leq \Lambda |x-y|$ for all $x,y\in \R^{d-1}$,
and an orthonormal coordinate system ${\rm CS}_z$ with its origin
at $z$ such that
$$
B(z, R)\cap D=\{ y:=
(y_1,\wt y)
 \mbox{ in } {\rm CS}_z: |y|< R,
y_1>\psi (\wt y)\}.
$$
The pair $(R, \Lambda)$ is called the $C^{1,1}$ characteristics of $D$
at $z$. An open
set $D$ in $\R^d$ with $d\ge 2$ is said to be a (uniform)
$C^{1,1}$ open set, if there exist $R, \Lambda>0$ such that $D$ is $C^{1,1}$ at every $z \in \partial D$ with the same $C^{1,1}$ characteristics $(R, \Lambda)$ of $D$.
The pair $(R, \Lambda)$ is called the characteristics of the
$C^{1,1}$ open set $D$. It is known that any $C^{1,1}$ open set $D$ with the characteristics $(R, \Lambda)$ satisfies the (uniform) interior ball
condition; that is, there exists $r<R$ such that for every $x\in D$ with $\delta_D(x)\le r$,
it holds that
$B(\xi_{x,r}^*,r)\subset D$, where $\delta_D(x)$ is the  Euclidean
distance between $x$ and $D^c$, and
$\xi_{x,r}^*:=z_x+r(x-z_x)/|x-z_x|$ with $z_x\in \partial D$ such that $|x-z_x|=\delta_D(x)$.

Let $D$ be a $C^{1,1}$ open subset of $\R^d$.
It was shown in \cite[Theorem 1.1]{CKS1} that
\begin{itemize}
\item[(i)] For every $T>0$, on $(0,T]\times D\times D$,
\begin{equation}\label{hk-s}p_D(t,x,y)\simeq p(t,x,y) \left(\frac{\delta_D(x)^{\alpha/2}}{\sqrt{t}}\wedge1\right)\left(\frac{\delta_D(y)^{\alpha/2}}{\sqrt{t}}\wedge1\right).\end{equation}
\item[(ii)] Suppose in addition that $D $ is bounded.
Then, for  every $T>0$, on $(T,\infty]\times D\times D$,
 \begin{equation}\label{hk-s1}p_D(t,x,y)\simeq \delta_D(x)^{\alpha/2} \delta_D(y)^{\alpha/2} e^{-\lambda_D t},\end{equation}
where $\lambda_D> 0$ is the smallest eigenvalue of the Dirichlet fractional Laplacian
$(-\Delta)^{\alpha/2}|_D$.
\end{itemize}

 (i) says that, until any finite time, the Dirichlet heat
kernel $p_D(t,x,y)$ is comparable with the global heat kernel $p(t,x,y)$ multiplied by
some weighted functions $\frac{\delta_D(x)^{\alpha/2}}{\sqrt{t}}\wedge1$ and $\frac{\delta_D(y)^{\alpha/2}}{\sqrt{t}}\wedge1$, which are determined by the
dependency
 between time and position of the points $x,y\in D$. The uniform $C^{1,1}$-property of the open set $D$ plays a key role in the proof of (i).
On the other hand,
the estimate of $p_D(t,x,y)$ for large time
given in (ii) is based on the result
(i) and  the so-called intrinsic ultracontractivity  of $p_D(t,x,y)$, i.e.,
$p_D(t,x,y)\le c
\phi_1(x)\phi_1(y)e^{-\lambda_D t}$, where
$\phi_1$ is the ground state (i.e., the positive
eigenfunction corresponding to the first eigenvalue $\lambda_D$)
and satisfies that $\phi_1(x)\simeq \delta_D(x)^{\alpha/2}$.
 The notion of intrinsic ultracontractivity was
first introduced by Davies and Simon in \cite{DS}.

The idea and the approach in \cite{CKS1} later were extensively  adopted to study Dirichlet heat kernel estimates for  censored stable-like processes in \cite{CKS-1}, for relativistic stable processes in \cite{CKS0}, for $\Delta^{\alpha/2}+\Delta^{\beta/2}$ in \cite{CKS-2}, for $\Delta+\Delta^{\alpha/2}$ in  \cite{CKS-3}, for subordinate Brownian motions with Gaussian components in \cite{CKS3}, for unimodal L\'evy processes in \cite{BK}, for a large class of symmetric pure jump Markov processes dominated by isotropic unimodal L\'evy processes with weak scaling conditions in \cite{GKK, KK}, and so on.

As mentioned above, the uniform $C^{1,1}$-property of $D$ is crucial for the estimate \eqref{hk-s}. When $D$ has lower regularity, \eqref{hk-s} may not be available but
Dirichlet heat kernel estimates can be established in terms of the survival probability $\Pp^x(\tau_D>t)$ instead of $\frac{\delta_D(x)^{\alpha/2}}{\sqrt{t}}\wedge1$, where $\tau_D$ is the first exit time  from $D$ of the process $X$, i.e., $\tau_D=\inf\{t>0: X_t\notin D\}$.
That is, in these cases one would expect that for any $T>0$, on $(0,T]\times D\times D$,
\begin{equation}\label{e:va}p_D(t,x,y)
\asymp p(t,x,y)\Pp^x(\tau_D>t)\Pp^y(\tau_D>t), \quad x,y\in D,\,\,0<t\le
T.\end{equation}\eqref{e:va} are called the Varopoulos-type estimates in the literature, and they can be traced back to the paper \cite{Var} by Varopoulos, where \eqref{e:va} are proved to be satisfied for Dirichlet heat kernels of a divergence and
nondivergence form elliptic operator (even with time-dependent coefficients)  on bounded Lipschitz domains.
Nowadays, \eqref{e:va} have been obtained for a quite large class of discontinuous processes.
See
\cite[Theorem 1]{BGR0} for Dirichlet heat kernel estimates of symmetric $\alpha$-stable process when $D$ is $\kappa$-fat (including domain
above the graph of a Lipschitz function), and see
\cite[Theorem 1.3 and Corollary 1.4]{CKS} and  \cite[Theorems 2.22 and 2.23]{CKSV}  for the corresponding results for  rotationally symmetric L\'evy processes and more general jump processes with critical killings, respectively.
On the other hand, as indicated above,
 the estimate \eqref{hk-s1} for large time
  is a direct consequence of the intrinsic ultracontractivity of the associated Dirichlet semigroup, which is satisfied when $C^{1,1}$ open set $D$ is bounded. Indeed,
  the intrinsic ultracontractivity holds for symmetric $\alpha$-stable process
  on any bounded open set $D$; see \cite{K,CWi14}.

\smallskip

 When $D$ is unbounded, \eqref{hk-s1} would fail. For example, it was proved in \cite[Theorem 1.2]{CT} that when $D$ is a half-space-like $C^{1,1}$ open set of $\R^d$, \eqref{hk-s} holds for all $(t,x,y)\in (0,\infty)\times D\times D$. See \cite{CT} for more details and \cite{CKS-u1, CKS-u2, CKS-u3, CKS-u4, Ki} for related developments on other (general) symmetric jump processes.

\ \

\noindent {\bf Notation}\,\, We will use the symbol ``$:=$'' to denote a definition,
which is read as ``is defined to be''.
In this paper,
for $a,b\in \R$ we denote $a\wedge b:=\min\{a,b\}$ and $a\vee b:=\max\{a,b\}$.  We also use the convention $0^{-1}=+\infty$.
We write
$h(s)\simeq f(s)$,
if there exist constants $c_{1},c_{2}>0$ such that
$
c_{1}f(s)\leq h(s)\leq c_{2}f(s)
$
for the specified range of the argument $s$.
Similarly, we write $h(s)\asymp f(s)g(s)$,
if there exist constants $c_{1},c_{2},c_{3},c_{4}>0$ such that
$
f(c_{1}s)g(c_{2}s)\leq h(s)\leq f(c_{3}s)g(c_{4}s)
$
for the specified range of $s$.
Upper case letters
 with subscripts $C_i$, $i=0,1,2,  \dots$, denote constants
that will be fixed throughout the paper.
Letters
 $C_{i,j,\cdot}$,
 $C_{i,j}$, $c_{i,j,\cdot}$, $i,j=0,1,2,  \dots$ with subscripts denote constants from Lemma $i.j$ or Proposition $i.j$ or the equation $(i,j)$, which are also fixed throughout the paper.
Lower case letters $c$'s without subscripts denote strictly positive
constants  whose values
are unimportant and which  may change even within a line, while values of lower case letters with subscripts
$c_i, i=0,1,2,  \dots$, are fixed in each proof,
and the labeling of these constants starts anew in each proof.
$c_i=c_i(a,b,c,\ldots)$, $i=0,1,2,  \dots$, denote  constants depending on $a, b, c, \ldots$.
The dependence on the dimension $d \ge 2$
and the index $\alpha\in (0,2)$
may not be mentioned explicitly.
Without any mention, the constants $C, C_{\cdot}, C_i, C_{i,j}, C_{i,j,\cdot}, c, c_{\cdot}, c_i,  c_{i,j,\cdot}$ are independent of $x,y\in D$ and $t>0$.
For $x \in D$ we use $z_x$ to denote a point $z_x$ in $\partial D$ such that $|x-z_x|=\delta_D(x)$.
For a Borel subset $V$ in $\R^d$, $|V|$ denotes  the Lebesgue measure of $V$.
We use the convention that $\inf\emptyset=\infty$ and $\sup \emptyset =0$.
\subsection{Setting and main result}\label{subsection1}
The aim of this paper is to study two-sided  Dirichlet heat kernel estimates of symmetric
$\alpha$-stable processes on horn-shaped regions (see below for the definition). We emphasis that horn-shaped regions are
non-uniformly $C^{1,1}$ near infinity
and usually unbounded, so the corresponding Dirichlet heat kernel estimates go beyond the scope of all the papers quoted above.

 In fact,  due to
 the non-uniform $C^{1,1}$-property
 of horn-shaped regions, new ideas and much more efforts are required to achieve  the sharp Dirichlet heat kernel estimates. Furthermore,
  on the one hand, our two-sided Dirichlet heat kernel estimates are
 for full time.
On the other hand,
our results cover the case that the associated Dirichlet semigroup is not intrinsically ultracontractive. To the best of our knowledge, this is the first result on explicit estimates for Dirichlet heat kernel on non-uniformly $C^{1,1}$ and unbounded domains. Even we did not find the corresponding results for Brownian motions in the literature.

\ \

 Throughout our paper,
  we always let  $f:\R\to (0,\infty)$ be a continuous function satisfying  the following conditions:
\begin{align}
&\text{$f(-t)\equiv f(0)$ for $t>0$ and $f\in C^{1,1} ((0,\infty))$}; \label{e:f1}\\
&\text{$f$ is non-increasing on $(0,\infty)$ with $\lim_{r\to\infty}f(r)=0$};\label{e:f2}\\
&\text{for any $c\ge1$, $f(cs)\simeq f(s)$ on $\R$.
\label{e:f3}}
\end{align}
Note that the above
properties imply that $f(s-2)  \le c f(s) $ for all $s$.
The function $f$ is served as the reference function for the horn-shaped region, which will be defined explicitly below.

\smallskip

Let $d\ge 2$, and write $x=(x_1,\tilde x)\in \R^d$, where $\tilde x=(x_2,x_3,\cdots, x_d)$.
For any $a>0$, denote
$D^a_f:=\{x\in \R^d:
x_1>a, |\tilde x|< f(x_1)\}.$

\begin{definition}
For any $d\ge2$, let $D$ be an open set of $\R^d$.

\noindent (1) We say that $D$ is a horn-shaped region with the
reference function $f$, if there exists $M  \ge 2 f(0)$ such that
\begin{itemize}
\item[(i)]
$D \cap  \{x\in \R^d:
x_1<M\}$ is bounded;
\item[(ii)] $\{x\in D:
x_1>M\}=D^{M}_f$;
\item[(iii)]
there exist $c_* \in (0, 1]$ and  $\Lambda >0$ such that
{\it for all } $x \in D_f^{M}$,
$D$ is $C^{1,1}$ at $z_x\in \partial D_f^{M}$ with the  characteristics
$(c_*f(x_1), \bk \Lambda)$.
\end{itemize}

\noindent
(2) We say that $D$ is a horn-shaped $C^{1,1}$ region with the
reference function $f$, if  $D$ is a horn-shaped region with the
reference function $f$ and there exist $c_* \in (0, 1]$ and  $\Lambda >0$ such that for all $x \in D$,
$D$ is $C^{1,1}$ at $z_x\in \partial D$ with the  characteristics
$(c_*f(x_1), \bk \Lambda)$.
\end{definition}
See Figure \ref{pic:1} for a horn-shaped $C^{1,1}$ region
$D$ when $d=2$.
\begin{figure}[!h]
	\centering
	\includegraphics[width=0.7\columnwidth]{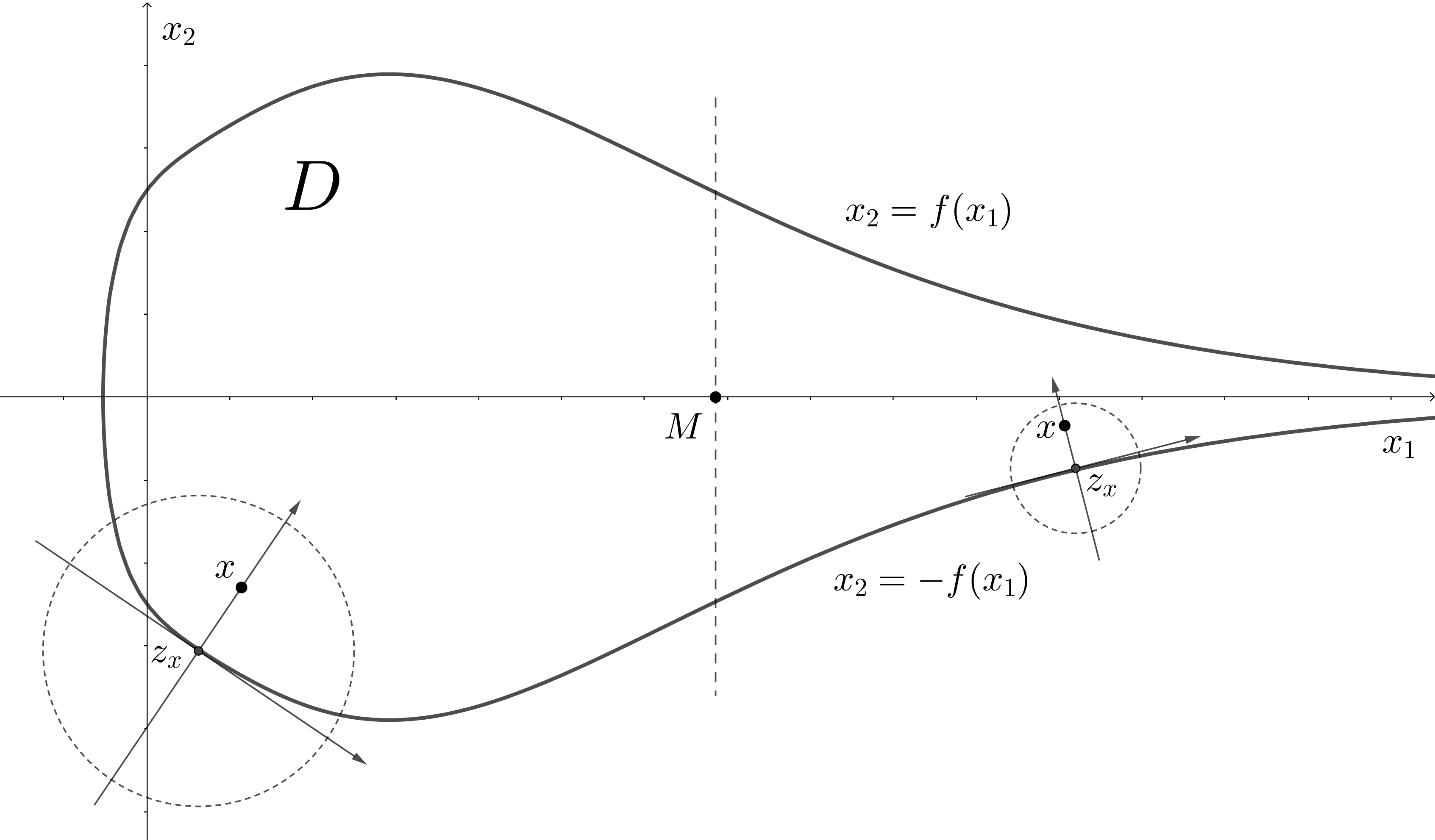}
	\caption{A horn-shaped $C^{1,1}$ region in $\R^2$.}
	 \label{pic:1}
\end{figure}
\begin{remark}\label{r:hc11}
It is easy to see that, for every horn-shaped region $D$ with the
reference function $f$, there exist horn-shaped $C^{1,1}$ regions $U_1$ and  $U_2$ with the
same reference function $f$ such that $U_1 \subset D  \subset U_2$, and $\delta_{U_1}(x)= \delta_{U_2}(x)=  \delta_{D}(x)$ for $x \in D^{M}_f$ with some constant $M>0$.
\end{remark}

For both mathematical and physical backgrounds
on the study of analytic properties related to horn-shaped regions,  readers
are referred to
\cite{B, BB, BD, Be, CKW, CL, DS, LPZ}.
We note that the properties \eqref{e:f1}--\eqref{e:f3} of the reference function $f$ essentially are also imposed in \cite{B,BB,BD, CL,DS, LPZ}, when explicit two-sided estimates for Dirichlet eigenfunctions for horn-shaped regions are concerned.

\smallskip

In the following, we fix a $C^{1,1}$ horn-shaped region $D$ with the
reference function $f$, and set \begin{align}\label{e:DPsi}
\Psi(t,x):=\frac{\delta_D(x)^{\alpha/2}\left(f(x_1)^{\alpha/2}\wedge t^{1/2}\right)}
 {t\wedge1}\wedge 1,\quad x\in D, t>0.
\end{align}
The function $\Psi(t,x)$ will be used to describe the behavior of Dirichlet heat kernels near the boundary of $D$.
Note that, by the definition of $D$, there exists  a  constant $c_0>0$ such that $\delta_D(x)\le c_0f(x_1)$ for all $x\in
D$.
Thus,  there exist $c_1,c_2>0$ such that for all $x\in D$ and $t>0$,
\begin{align*}
\Psi(t,x)\simeq
& \begin{cases}
\displaystyle 1 &\text{ if } \delta_D(x)\ge c_1t^{1/\alpha}; \\
\displaystyle
\frac{\delta_D(x)^{\alpha/2}}{\sqrt{t}}   & \text{ if } \delta_D(x)\le
c_1t^{1/\alpha}\le c_2f(x_1); \\
\displaystyle
  \frac{\delta_D(x)^{\alpha/2}
f(x_1)^{\alpha/2}}{t\wedge1} & \text{ if }
c_1 t^{1/\alpha}\ge c_2 f(x_1).\end{cases}
\end{align*}
We also set
\begin{equation}\label{e1-1}
\phi(x):=\frac{\delta_D(x)^{\alpha/2}f(x_1)^{\alpha/2} }{(1+|x|)^{d+\alpha}},\quad x\in D,
\end{equation}
which is comparable to  the ground state
 of Dirichlet fractional Laplacian $(-\Delta)^{\alpha/2}|_D$
 for the horn-shaped region $D$;
see \cite[Theorem 1 and Proposition 1]{Kw} or \cite[Theorem 6.1]{CKW} for more details.

For
any fixed
constant $c>0$, let
$t_0(x):=t_0(c, x)\in (0,\infty)$, which is defined for all $x\in D$, such that
\begin{align}
\label{e:t0}
e^{-ct_0(x) f(x_1)^{-\alpha}}= t_0(x)(1+|x|)^{-(d+\alpha-1)}, \quad x \in D.
\end{align}
Since the function $t\mapsto e^{-c_0f(x_1)^{-\alpha}t}$ is continuous and strictly
decreasing on $(0,\infty)$ with values on $(0,1)$
 and the function $t\mapsto
t(1+|x|)^{-(d+\alpha-1)}$ is continuous and strictly increasing on $(0,\infty)$ with values on
$(0,\infty)$,
$t_0(x)$ exists and is unique for all $x\in D$.
The functions $e^{-c_0f(x_1)^{-\alpha}t}$ and $t(1+|x|)^{-(d+\alpha-1)}$ come from estimates of
the survival probability $\Pp^x(\tau_D> t)$; see Lemma \ref{e:lm2} below.
One can see that there is a constant $c_1>0$ such that for all $x\in D$,
$f(x_1)^{\alpha}\le c_1 t_0(x).$
Usually it is not easy to obtain the explicit value of $t_0(x)$;
however, we possibly can get explicit estimates of $t_0(x)$
for all $x \in D$
under some mild assumption on the reference function $f$. For example, if $f(r)\ge
c(1+r)^{- p}$ for some constants $c$ and $p>0$, then
$t_0(x)\simeq f(x_1)^\alpha
\log(2+|x|)$ for all $x\in D$.

\ \

The main result of this paper is as follows.

\begin{theorem}\label{Main} Suppose that $d\ge2$ and  $D$ is a $C^{1,1}$ horn-shaped region of $\R^d$
associated  with
the reference function $f$ satisfying \eqref{e:f1}, \eqref{e:f2} and \eqref{e:f3}.
Let
$p_D(t, x, y)$  be the transition density of killed symmetric $\alpha$-stable process $X^D$ with $\alpha\in (0,2)$.
Then,
there exist constants $c_{1.3.0}, c_{1.3.1}>0$
such that
the following two statements hold with
$t_0( \cdot ):=t_0(c_{1.3.0}, \cdot)$.
\smallskip

\noindent
{\rm(1)}
 For any $x,y\in D$ and any $0<t\le c_{1.3.1}(f(x_1) \vee f(y_1))^\alpha\le 1$,
\begin{equation}\label{e:hk1} p_D(t,x,y)\simeq  p(t,x,y)\Psi(t,x)\Psi(t,y).  \end{equation}

\smallskip

\noindent
{\rm(2)}
Suppose in addition that $f(s)\ge c(1+s)^{-p}$ on $(0,\infty)$ for some $c,p>0$, and the function $s \mapsto f(s)^{\alpha}\log(2+s)$ is comparable to some
monotone function $g$ on $(0,\infty)$
$($i.e. $g(s)\simeq f(s)^\alpha \log(2+s)$$)$.
\begin{itemize}\item[(i)] If  $g$ is
non-increasing on $(0,\infty)$ so that $\lim\limits_{s\to\infty} g(s)=0$, then
there exist positive constants
$c_{1.3.i}$ $(2\le i \le 10)$
such that
 for any $x,y\in D$ and any $ c_{1.3.1} (f(x_1)\vee f(y_1))^\alpha\le t\le c_{1.3.2} (t_0(x)\vee t_0(y))
( \le c_{1.3.2} \|t_0\|_\infty<\infty )$,
\begin{equation}\label{e:hk2}\begin{split}\qquad p_D(t,x,y)
 \asymp p(t,x,y) \Psi(t,x)\Psi(t,y) \exp\left\{-t (f(x_1)\vee f(y_1))^{-\alpha} \right\};\end{split}\end{equation}
and for any $x,y\in D$ and any $t\ge c_{1.3.2} (t_0(x)\vee t_0(y))$,
\begin{equation}\label{e:hk3}\begin{split}
\qquad&c_{1.3.3} \phi(x)\phi(y) \max\Big\{\int_{0}^{c_{1.3.4} s_1(c_{1.3.5}t)}
f(s)^{d-1}e^{-c_{1.3.6}tf(s)^{-\alpha}}\,ds, e^{-c_{1.3.6} t}\Big\}  \\
\qquad&\le p_D(t,x,y) \\
\qquad&\le
c_{1.3.7}\phi(x)\phi(y) \max\Big\{\int_{0}^{c_{1.3.8}s_1(c_{1.3.9}t)}
f(s)^{d-1}e^{-c_{1.3.10}tf(s)^{-\alpha}}\,ds, e^{-c_{1.3.10} t}\Big\},
\end{split}\end{equation}
where $s_1(t)=g^{-1}(t)\vee 2$
and $g^{-1}(t)=\inf\{s\ge0: g(s)\le t\}$ for $t>0$.
\item[(ii)] If  $g$ is
non-decreasing on $(0,\infty)$ so that $\lim\limits_{s\to\infty} g(s)>0$, then
there exists  a constant $c_{1.3.2}>0$ such that
 for any $x,y\in D$ and any $ c_{1.3.1}((f(x_1) \vee f(y_1))^\alpha\le t\le c_{1.3.2} (t_0(x)\wedge t_0(y))$,
\begin{equation}\label{e:hk4}\qquad p_D(t,x,y) \asymp p(t,x,y) \Psi(t,x)\Psi(t,y) \exp\{-t (f(x_1)\vee f(y_1))^{-\alpha} \};\end{equation} and for any $x,y\in D$
and any $t\ge c_{1.3.2} (t_0(x)\wedge t_0(y))
(\ge c_{1.3.2} \inf_{z\in D} t_0(z) >0)$,
\begin{equation}\label{e:hk5}\qquad p_D(t,x,y)\asymp \phi(x)\phi(y) e^{-t}.\end{equation}
 \end{itemize}
 \end{theorem}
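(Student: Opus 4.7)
The plan is to split the proof into three regimes---short time, medium time and long time---and to obtain each via probabilistic techniques (L\'evy system formula, boundary exit estimates, semigroup chaining), using as fundamental input a sharp two-sided estimate of the survival probability $\Pp^x(\tau_D>t)$. The shape of $\Psi(t,x)$ in \eqref{e:DPsi} already encodes three geometric regimes (bulk, boundary layer of width $t^{1/\alpha}$, and ``pinched'' scale $f(x_1)$), and one expects $\Pp^x(\tau_D>t)\simeq \Psi(t,x)\exp\{-ctf(x_1)^{-\alpha}\}$ up to the crossover time $t_0(x)$ from \eqref{e:t0}, beyond which the decay is governed by the ground state $\phi$ in \eqref{e1-1}. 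These survival estimates dictate both the form of $\Psi$ and the exponential factor appearing in \eqref{e:hk2}--\eqref{e:hk4}.

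For part (1), when $t\le c(f(x_1)\vee f(y_1))^\alpha\le 1$, the time scale is too short for the process to feel the non-uniformity of the boundary. I would sandwich $D$ between horn-shaped $C^{1,1}$ regions $U_1\subset D\subset U_2$ via Remark \ref{r:hc11} and locally rescale so that the patch around $x$ of radius $\simeq f(x_1)$ looks like a $C^{1,1}$ domain with fixed characteristics. The standard $C^{1,1}$ short-time bound \eqref{hk-s} from \cite{CKS1} then applies on this patch; the upper bound on $p_D(t,x,y)$ follows from $p_D\le p$ combined with a Meyer-type decomposition and the survival estimate, and the lower bound uses a three-point chain $x\to \xi^*_x \to \xi^*_y\to y$ through interior reference points at depth $\simeq f(x_1)$, combined with the interior ball condition and the free heat kernel lower bound.

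In the medium range $c(f(x_1)\vee f(y_1))^\alpha \le t\le c(t_0(x)\vee t_0(y))$ treated in (2)(i)--(ii), the new feature is the exponential factor $\exp\{-t(f(x_1)\vee f(y_1))^{-\alpha}\}$, recording the probability that a path remains trapped in the narrow neck around $x$ or $y$ over a time of order $t$. The upper bound would be obtained by iterating the short-time bound from (1) through the Chapman--Kolmogorov identity at steps of size $\simeq f(x_1)^\alpha$, collecting one exponential factor per step from the survival probability estimate; the matching lower bound follows by concatenating short-time lower bounds along a broken path confined to the narrow horn, with the semigroup property and the L\'evy system handling the endpoints.

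The long-time case $t\ge c(t_0(x)\vee t_0(y))$ (resp.\ $t\ge c(t_0(x)\wedge t_0(y))$) is structurally the most novel. Here the heat kernel factorizes as $\phi(x)\phi(y)$ times a global ``reservoir'' quantity: the integral $\int_0^{c s_1(t)} f(s)^{d-1}\exp\{-ctf(s)^{-\alpha}\}\,ds$ arises from summing, over cross-sections of the horn at axial coordinate $s$, the volume $\simeq f(s)^{d-1}$ weighted by the local survival cost $\exp\{-ctf(s)^{-\alpha}\}$; the cutoff $s_1(t)=g^{-1}(t)\vee 2$ appears because for $s\gg g^{-1}(t)$ the exponential cost overwhelms the available algebraic room, while the competing term $e^{-ct}$ accounts for paths concentrated in the bounded body of $D$. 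For the upper bound I would use the three-fold decomposition $p_D(t,x,y)=\int p_D(\tfrac{t}{3},x,z)\,p_D(\tfrac{t}{3},z,z')\,p_D(\tfrac{t}{3},z',y)\,dz\,dz'$ estimated via the boundary factor and the survival probability, which produces the integrand and cutoff automatically; for the lower bound, test paths that drift from $x$ to an optimal cross-section $s\le s_1(t)$, wait there for time comparable to $t$, and return to $y$, suffice after optimizing in $s$. The principal obstacle throughout is that all classical tools---interior ball condition, boundary Harnack principle, $C^{1,1}$ heat kernel---carry constants depending on the local scale $f(x_1)$, so every invocation must be rescaled and the scale-dependence tracked carefully; it is precisely this tracking that produces the non-standard integrand $f(s)^{d-1}\exp\{-ctf(s)^{-\alpha}\}$ in \eqref{e:hk3} rather than a clean Gaussian-like correction, and that forces the dichotomy between the monotone cases (i) and (ii).
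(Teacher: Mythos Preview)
Your three-regime architecture and the identification of the survival probability as the central input match the paper's strategy exactly, and your geometric interpretation of the integral in \eqref{e:hk3} (cross-sectional volume $f(s)^{d-1}$ times local survival cost) is precisely how the paper builds the lower bound in that regime. However, two of your upper-bound arguments are underspecified in ways that the paper has to work harder to repair.

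\textbf{Medium range.} You propose to obtain the upper bound by iterating the short-time estimate at steps of size $\simeq f(x_1)^\alpha$ and ``collecting one exponential factor per step''. This works for the lower bound (restrict each intermediate point to a small ball $V_y\subset D$ and the factors genuinely multiply), but for the upper bound it fails: the Chapman--Kolmogorov integral is over all of $D$, and an intermediate point $z_i$ that lands in the thick part of the horn contributes no exponential factor from $\Psi$. The paper instead first proves a global survival bound $\Pp^x(\tau_D>t)\le c\,\Psi(t,x)\min\{e^{-c f(x_1)^{-\alpha}t}+t(1+|x|)^{-(d+\alpha-1)},\,e^{-ct}\}$ (Lemma \ref{e:lm2}), where the exponential comes from the uniform killing rate $\inf_{z\in U}\int_{U^c}|z-y|^{-d-\alpha}dy$ on a half-ball $U$, and then inserts this bound once into a two-fold split $\int_{V_1}+\int_{V_2}$ according to whether $|z-y|\gtrless |x-y|/2$. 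No iteration is needed or helpful on the upper side.

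\textbf{Long time, case (i).} A three-fold decomposition does not suffice for the upper bound. The survival estimate above gives only $\Pp^z(\tau_D>t)\le c\,\Psi(t,z)(t(1+|z|)^{-(d+\alpha-1)})^{q}$ with some $q\le 1$ in this regime, which is too weak to produce the factor $(1+|z|)^{-(d+\alpha)}$ required for $\phi(z)$. The paper bootstraps: it iterates the two-fold split $N$ times (with $N$ chosen so that $(N-1)q(d+\alpha-1)-dp\ge d+\alpha$) to upgrade the spatial decay of $p_D(Nt,z,u)$ from $(1+|z|)^{-q(d+\alpha-1)+dp}$ to $(1+|z|)^{-(d+\alpha)}$, and only then integrates against the cross-sectional factor $K(t,z)=e^{-ctf(z_1)^{-\alpha}}+t(1+|z|)^{-(d+\alpha-1)}$ to obtain the integral over $[0,s_1(t)]$. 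Your single three-fold step would leave you short of the full $\phi(x)\phi(y)$ factor. The non-decreasing case (ii) is even more delicate: the paper introduces two further thresholds $s_2(t),s_3(t)$ and handles five separate spatial configurations of $(x,y)$ relative to them.
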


\begin{remark}\label{r1-4}
Let us give some remarks on Theorem \ref{Main}.

\noindent
(i)
 It is clear from Remark \ref{r:hc11} and
the proof of Theorem \ref{Main} that,
for horn-shaped region $D$
(not necessarily $C^{1,1}$ near the origin),
   the conclusions of Theorem \ref{Main} still hold true for all $x,y\in D$ with $|x|\vee |y|$ large enough.

\noindent
(ii)
When
$0<t\le  c_1(f(x_1)^\alpha \wedge f(y_1)^\alpha)$,
$p_D(t,x,y)$ satisfies \eqref{e:hk1}, which is of the same form as \eqref{hk-s}; that is, $p_D(t,x,y)$ is comparable with the global heat kernel $p(t,x,y)$ multiplied by
 weighted functions $\Psi(t,x)$ and $\Psi(t,y)$, which are comparable to
 $\frac{\delta_D(x)^{\alpha/2}}{\sqrt{t}}\wedge1$ and $\frac{\delta_D(y)^{\alpha/2}}{\sqrt{t}}\wedge1$ respectively.
  This assertion is reasonable since $C^{1,1}$ horn-shaped region $D$ enjoys the \lq\lq
  semi-uniform\rq\rq\, interior ball condition in the sense that for any $x\in D$ and $r\in (0, c_*f(x_1))$ (with possibly small $c_*$), $B(\xi_{x,r}^*,r)\subset D$ with $\xi_{x,r}^*=z_x+r(x-z_x)/|x-z_x|$.

\noindent
(iii) For  $t\ge  c_1(f(x_1)^\alpha \vee f(y_1)^\alpha)$, estimates for $p_D(t,x,y)$ heavily rely on the asymptotic
 property of the reference function $f$. According to \cite[Theorem 5]{Kw}, under assumptions of case (i) in (2) the associated Dirichlet
 semigroup $(P_t^D)_{t\ge0}$ is intrinsically ultracontractive. Note that $s_1(t)=2$ for large $t$ under assumptions of case (i) in (2). Hence, similar to \eqref{hk-s1}, the estimate indicated in \eqref{e:hk3} for $t\ge 1$ essentially is a direct consequence of the intrinsic ultracontractivity of $(P_t^D)_{t\ge0}$. However, when $c_1(f(x_1)^\alpha \vee f(y_1)^\alpha)\le
 t\le 1$, estimates for $p_D(t,x,y)$ are much more delicate.

\noindent
(iv)
 It will be shown in Lemma \ref{e:lm2} that the
following upper bound on survival probability
 holds true: for any $x\in D$ and $t>0$,
 \begin{equation}\label{e:eeffcc}
 \Pp^x(\tau_D>t)\le c_1\Psi(t,x)\min\Big\{e^{-c_2f(x_1)^{-\alpha}t}+t(1+|x|)^{-(d+\alpha-1)},
e^{-c_2t}\Big\}.
 \end{equation}
In particular, when
$t = T_0:=c_{1.3.2} \|t_0\|_\infty<\infty$
and $10T_0<|x| \le 2|y|$,
\eqref{e:eeffcc} implies that
$$
p(T_0,x,y)\Pp^x(\tau_D>T_0)\Pp^y(\tau_D>T_0)\le c_3(T_0)
\frac{\phi(x)\phi(y) |x|  }{|y|^{d+\alpha-1}}.
$$
On the other hand,
\eqref{e:hk3} implies that
$
p_D(T_0,x,y) \asymp
\phi(x)\phi(y) $.
Therefore, the so-called Varopoulos-type estimates  \eqref{e:va}
 do not hold true
under assumptions of case (i) in (2), which is different from \cite[Theorem 1.1]{CKS1} and
 \cite[Theorem 1]{BGR0}.

\noindent
(v)
 Under assumptions of case (ii) in (2), the associated Dirichlet
 semigroup $(P^D_t)_{t\ge0}$ is not intrinsically ultracontractive, see
 also \cite[Theorem 5]{Kw}. Though \eqref{e:hk4} is of the same form as that for \eqref{e:hk2},
 the ranges of time variable are different; that is, $c_2 (t_0(x)\wedge t_0(y))\ge 1$ in \eqref{e:hk4}, while $c_2 (t_0(x)\vee t_0(y))\le 1$ in \eqref{e:hk2}.
 Also
 by this reason, the estimates \eqref{e:hk3} and \eqref{e:hk5} are different too, even both of them enjoy the same form (by neglecting constants in the exponential term) when $t\to\infty$.
\end{remark}

The proof of Theorem \ref{Main} is
completely different from
those in \cite{CKS1} and  \cite{CT},  where two-sided Dirichlet heat kernel estimates for fractional Laplacians in
 uniformly $C^{1,1}$ open sets and half-space-like open sets
 were established respectively.
For example,
because of the non-uniformity on  $C^{1,1}$ characteristics,  the boundary Harnack principle
can not be
applied
to $C^{1,1}$ horn-shaped regions,
and so
the approach of \cite[Theorem 1.1 (i)]{CKS1} does not work in the present setting.
In order to obtain
Dirichlet heat kernel estimates of horn-shaped
 regions, we need to take into accounts
 carefully the interaction between jumping kernel of symmetric $\alpha$-stable processes and the characterization (heavily depending on the reference function $f$) of the horn-shaped region.
 Roughly speaking, the proof of Theorem \ref{Main} is split into three cases according to different ranges of time and space.
 (1) When $0<t\le  c_1  (f(x_1)\vee f(y_1))^\alpha$,
we make use of the Chapman-Kolmogorov equation and a general formula for upper bounds of Dirichlet heat kernels
(see \cite[Lemma 1.10]{BK}, \cite[Lemma 5.1]{GKK} and \cite[Lemma 3.1]{CKS3}).
 Note that, in this case the estimates for exit probability (see Lemma \ref{l5-4-2})  are different from those implied by
 \eqref{hk-s} when  $c_1  (f(x_1)\wedge f(y_1))^\alpha \le t\le  c_1  (f(x_1)\vee f(y_1))^\alpha$.
(2) When
 $c_1(f(x_1)\vee f(y_1))^\alpha\le t\le c_2(t_0(x)\vee t_0(y))$ or $c_1(f(x_1)\vee f(y_1))^\alpha\le t\le c_2(t_0(x)\wedge t_0(y))$,
 we will adopt the chain argument to
derive lower bounds and
 apply the split technique
 combined with the survival probability
 \eqref{l3-1-0}
 to obtain upper bounds.
 In particular, in arguments for both cases above,
 instead of the boundary Harnack principle,
we make use of the L\'evy system. (3) When $t\ge c_2(t_0(x)\vee t_0(y))$ or $t\ge c_2(t_0(x)\wedge t_0(y))$,
the dominant behaviour (with the largest probability)
of the killed process taking time $t$ from $x$ to $y$ is that, the process jumps form $x$ to the origin, and then jumps to $y$ after spending more than $t/2$
at a
neighborhood of origin or
at another neighborhood inside $D$ with the largest survival probability.
This gives us the intuitive meanings of \eqref{e:hk3} and \eqref{e:hk5}.
In this case, lower bounds are derived by using assertions in cases (1) and (2); however, the proofs of upper bounds are much more involved. In particular, we will use the iteration arguments based on the survival probability.

\subsection{ Relation with intrinsic ultracontractivity}\label{section1.2}
Recall that
in the present setting the Dirichlet semigroup  $(P^D_t)_{t\ge0}$ is
intrinsically ultracontractive,
if for every $t>0$ there is a constant $C_{D, t}>0$ such that
\begin{equation}\label{e:iu}p_D(t,x,y)\le C_{D,t}\phi(x)\phi(y),\quad x,y\in D, \end{equation}
where $\phi$ is defined by \eqref{e1-1}
that is comparable with the ground state of $(P^D_t)_{t\ge0}$.

The intrinsic ultracontractivity of Markov semigroups (including Dirichlet semigroups and Feyman-Kac semigroups) has been intensively established for various L\'evy type processes. For more details, see \cite{CWi14, CW16} and the references therein.
The intrinsic ultracontractivity and two-sided estimates of ground state for
symmetric $\alpha$-stable processes and more general symmetric jump processes on unbounded open sets were
investigated in \cite{Kw} and \cite{CKW}, respectively.
We note that the two-sided Dirichlet heat kernel estimates are much more complex than estimates of ground state.
Informally, to obtain
Dirichlet heat kernel estimates
we need to consider the relationship between time and space carefully;
for ground state estimates we only just take time $t=1$ and make use of estimates for $p(1,x,y)$;
see \cite[Sections 5 and 6]{CKW}.

In the following, we deduce
 explicit estimates for the intrinsic ultracontractivity under assumptions in (i) of (2) in Theorem \ref{Main}, by directly applying two-sided Dirichlet heat kernel estimates.  Recall that $g(s)\simeq
f(s)^{\alpha}\log(2+s)$.

\begin{proposition}\label{p:IU}
Under assumptions in {\rm(i)} of {\rm(2)} in Theorem $\ref{Main}$, \eqref{e:iu} holds with
$$C_{D,t}=
c_{\ac{2}}
\begin{cases}
\displaystyle t^{-2-d/\alpha}(1+g^{-1}(c_{\ac{3}} t))^{2d+2\alpha},&\,\,0<t\le c_{\ac{1}}(t_0(x)\vee t_0(y));\\
\displaystyle \max\Big\{\int_{0}^{c_{\ac{4}}s_1(c_{\ac{5}}t)}
f(s)^{d-1}e^{-c_{\ac{6}}tf(s)^{-\alpha}}\,ds, e^{-c_{\ac{7}} t}\Big\},&\,\,t> c_{\ac{1}}(t_0(x)\vee t_0(y)).
\end{cases}
$$
\end{proposition}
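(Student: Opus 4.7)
The plan is to separate the two $t$-regimes appearing in the statement. For $t>c_{1.3.2}(t_0(x)\vee t_0(y))$, the second case of $C_{D,t}$ is literally the upper bound in \eqref{e:hk3}, so nothing further is needed once Theorem~\ref{Main}(2)(i) is invoked. The substantive work is the range $0<t\le c_{1.3.2}(t_0(x)\vee t_0(y))$, where we must convert the estimates \eqref{e:hk1} (valid on $0<t\le c_{1.3.1}(f(x_1)\vee f(y_1))^{\alpha}$) and \eqref{e:hk2} (valid on the complementary subinterval) into a bound of the form $C_{D,t}\phi(x)\phi(y)$ with $C_{D,t}$ as claimed.

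On this range, the exponential factor in \eqref{e:hk2} is $\leq 1$, so in both subcases one has
$$p_D(t,x,y)\le c\,p(t,x,y)\Psi(t,x)\Psi(t,y).$$
From the definition \eqref{e:DPsi} of $\Psi$ and \eqref{e1-1} of $\phi$, one reads off the pointwise comparison
$$\frac{\Psi(t,x)}{\phi(x)}\le \frac{(1+|x|)^{d+\alpha}}{t\wedge 1},\qquad \frac{\Psi(t,y)}{\phi(y)}\le \frac{(1+|y|)^{d+\alpha}}{t\wedge 1},$$
so the whole task reduces to controlling $p(t,x,y)(1+|x|)^{d+\alpha}(1+|y|)^{d+\alpha}$ by $c\,t^{-d/\alpha}(1+g^{-1}(c't))^{2(d+\alpha)}$ on the prescribed range.

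For this, assume without loss of generality that $t_0(x)\ge t_0(y)$. Since $t_0(x)\simeq g(x_1)$ and $g$ is comparable to a non-increasing function vanishing at infinity, $t\le c_{1.3.2}t_0(x)$ forces $g(x_1)\ge c't$, hence $x_1\le g^{-1}(c't)$ and therefore $1+|x|\le C(1+g^{-1}(c't))$. The point $y$ is not controlled directly; instead I use the triangle inequality $(1+|y|)^{d+\alpha}\le C((1+|x|)^{d+\alpha}+|x-y|^{d+\alpha})$ together with the global estimate $p(t,x,y)\simeq t^{-d/\alpha}\wedge (t|x-y|^{-(d+\alpha)})$, which yields
$$p(t,x,y)(1+|y|)^{d+\alpha}\le C\bigl(t^{-d/\alpha}(1+|x|)^{d+\alpha}+t\bigr).$$
Since $t\le c_{1.3.2}\|t_0\|_{\infty}<\infty$ under the hypotheses of case~(i), treating $t\le 1$ and $1<t\le \|t_0\|_{\infty}$ separately shows that the additive $t$ is absorbed into the first term, so $p(t,x,y)(1+|y|)^{d+\alpha}\le C\,t^{-d/\alpha}(1+g^{-1}(c't))^{d+\alpha}$. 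Assembling everything gives
$$\frac{p_D(t,x,y)}{\phi(x)\phi(y)}\le \frac{c(1+g^{-1}(c't))^{2(d+\alpha)}}{(t\wedge 1)^{2}\,t^{d/\alpha}}\le c\,t^{-2-d/\alpha}(1+g^{-1}(c't))^{2d+2\alpha},$$
as required (bounded factors in $t$ being absorbed into the constant when $t>1$).

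The main obstacle, as isolated above, is that the hypothesis $t\le c_{1.3.2}(t_0(x)\vee t_0(y))$ only controls one of the two points through $g^{-1}$; the polynomial factor at the other point must be traded against the spatial decay of the free heat kernel $p(t,x,y)$ in $|x-y|$. Once this exchange is carried out via the triangle inequality and the two bounds $p\le ct^{-d/\alpha}$ and $p\le ct|x-y|^{-(d+\alpha)}$, the remaining manipulations are routine algebra with the explicit expressions for $\Psi$ and $\phi$, and the finiteness of $\|t_0\|_{\infty}$ in case~(i) is exactly what allows us to pass freely between the $t<1$ and $t\ge 1$ subregimes inside the ``small-$t$'' range.
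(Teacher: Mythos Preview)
Your proposal is correct and follows essentially the same approach as the paper: both invoke the upper bounds \eqref{e:hk1}--\eqref{e:hk3} from Theorem~\ref{Main}, reduce to controlling $p(t,x,y)(1+|x|)^{d+\alpha}(1+|y|)^{d+\alpha}$, and use $t\lesssim t_0(\cdot)\simeq g(\cdot)$ together with the monotonicity of $g$ to bound the ``small'' point by $1+g^{-1}(c't)$. The only difference is presentational: the paper assumes $x_1\ge y_1$ (so that $|y|$ is the controlled point) and then splits into the three cases $|x|\sim|y|$, $|x|\ge 2|y|$, $|x|\le|y|/2$, applying respectively $p\le ct^{-d/\alpha}$ and $p\le ct|x-y|^{-(d+\alpha)}$; your triangle-inequality device $(1+|y|)^{d+\alpha}\le C\bigl((1+|x|)^{d+\alpha}+|x-y|^{d+\alpha}\bigr)$ packages these two heat-kernel bounds into a single line and avoids the case analysis, which is a tidy shortcut but not a different idea.
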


\begin{proof}
According to \eqref{e:hk1} and \eqref{e:hk2}, there are constants $c_0, c_1>0$ such that
$p_D(t,x,y)\le c_1p(t,x,y)\Psi(t,x)\Psi(t,y)$ for any $x,y\in D$ and $0<t\le c_0(t_0(x)\vee t_0(y))\le 1$.

In the following, without loss of generality, we
may assume that $x,y\in D$ with $x_1\ge y_1$.
According to the
non-increasing property of the function $g$ and
$\lim_{s\to\infty}g(s)=0$ as well as $t_0(y)\simeq g(|y|)$,  $t\le c_2t_0(y)$ for some $c_2>0$ implies that
$|y|\le c_3g^{-1}(c_4 t)$. In particular,
\begin{equation}\label{e:key}
(1+|y|)^{d+\alpha}\le
\left(1+c_3g^{-1}(c_4 t)\right)^{d+\alpha}.
\end{equation}
Thus, if
$|y|/2\le |x|\le 2|y|$, then, for $0<t\le c_0(t_0(x)\vee t_0(y))\le 1$,
\begin{align*}p_D(t,x,y)\le &c_5 t^{-d/\alpha}
\frac{\delta_D(x)^{\alpha/2}(f(x_1)^{\alpha/2}\wedge t^{1/2})}{t}
\frac{\delta_D(y)^{\alpha/2}
(f(y_1)^{\alpha/2}\wedge t^{1/2})}{t}\\
\le& c_{6} t^{-2-d/\alpha}(1+g^{-1}(c_7 t))^{2d+2\alpha} \frac{\delta_D(x)^{\alpha/2}
f(x_1)^{\alpha/2}}{(1+|x|)^{d+\alpha}}\frac{\delta_D(y)^{\alpha/2}f(y_1)^{\alpha/2}}{(1+|y|)^{d+\alpha}}\\
=& c_{6} t^{-2-d/\alpha}(1+g^{-1}(c_7 t))^{2d+2\alpha}\phi(x)\phi(y),\end{align*} where in the second inequality we used the fact that
$|y|/2\le |x|\le 2|y|$ and \eqref{e:key};
if $|x|\ge 2|y|$, then, for $0<t\le c_0(t_0(x)\vee t_0(y))$, we can argue as follows
\begin{align*}p_D(t,x,y)\le &   \frac{c_{8} t}{(1+|x|)^{d+\alpha}}
\frac{\delta_D(x)^{\alpha/2}(f(x_1)^{\alpha/2}\wedge t^{1/2})}{t}
\frac{\delta_D(y)^{\alpha/2}
(f(y_1)^{\alpha/2}\wedge t^{1/2})}{t}\\
\le& c_{9} t^{-1}(1+ g^{-1}(c_{10}t))^{d+\alpha} \frac{\delta_D(x)^{\alpha/2}
f(x_1)^{\alpha/2}}{(1+|x|)^{d+\alpha}}\frac{\delta_D(y)^{\alpha/2}f(y_1)^{\alpha/2}}{(1+|y|)^{d+\alpha}}\\
=& c_{9} t^{-1}(1+g^{-1}(c_{10}t))^{d+\alpha}\phi(x)\phi(y),\end{align*} where the first inequality follows from
the fact that $|x|\ge2|y|$, and
the second inequality is due to \eqref{e:key}.
Similarly, we can prove that if $|x|\le |y|/2$, then, for $0<t\le c_0(t_0(x)\vee t_0(y))$,
$$p_D(t,x,y)\le c_{11} t^{-1}(1+g^{-1}(c_{12}t))^{d+\alpha}\phi(x)\phi(y).$$

Combining all the
estimates above with \eqref{e:hk3}, we can obtain that \eqref{e:iu} holds for all $x,y\in D$ and $t>0$ with the desired estimates for $C_{D,t}$.
 \end{proof}
 We would like to mention that the arguments above (in particular, \eqref{e:key}) fail, under assumptions in (ii) of (2) in Theorem \ref{Main}, i.e., when the function $g(s)$ is
 non-decreasing on $(0,\infty)$.

\subsection{A toy example}

In this part,
we present the following example to illustrate how powerful Theorem \ref{Main} is.

\begin{example}\label{exam} Let $f(s)=\log^{-\theta}(2+s)$ with $\theta>0$ for all $s\in[0,\infty)$. For any $x,y\in D$, set $t_1(x,y)=\log^{-\theta \alpha}(e+(|x|\wedge |y|))$ and $t_2(x,y)= \log^{-(\theta \alpha-1)}(e+(|x|\wedge |y|)).$  Then, we have the following two statements.

\noindent
(i) Assume that $\theta>1/\alpha$. Then,
there exist positive constants $c_{1.6.1}$, $c_{1.6.2}$ and $c_{1.6.3}$ such that
for all $x,y\in D$,
\begin{align*}
&p_D(t,x,y)\asymp  \\
 &\begin{cases}\displaystyle p(t,x,y)
\bigg(\frac{\delta_D(x)^{\alpha/2}\left(\log^{-\theta\alpha/2}(e+|x|)\wedge t^{1/2}\right)}
 {t}\wedge 1\bigg)\bigg(\frac{\delta_D(y)^{\alpha/2}\left(\log^{-\theta\alpha/2}(e+|y|)\wedge t^{1/2}\right)}
 {t}\wedge 1\bigg)\\
\qquad\qquad \qquad\qquad \qquad\qquad\qquad\qquad\qquad\qquad\qquad\qquad \text{for all }0<t\le c_{1.6.1}t_1(x,y); \\[2pt]
\displaystyle p(t,x,y) \frac{\delta_D(x)^{\alpha/2}\log^{-\theta\alpha/2}(e+|x|)}{t}\frac{\delta_D(y)^{\alpha/2}\log^{-\theta\alpha/2}(e+|y|)}{t} \exp(-t\log^{\theta\alpha}(e+(|x|\wedge|y|)))&\\
\qquad\qquad \qquad\qquad \qquad\qquad\qquad\qquad\qquad\qquad\qquad\qquad \text{for all } c_{1.6.1}t_1(x,y)< t\le
c_{1.6.2} t_2(x,y);\\[2pt]
\displaystyle  \frac{\delta_D(x)^{\alpha/2}\log^{-\theta\alpha/2}(e+|x|)}{(1+|x|)^{d+\alpha}}\frac{\delta_D(y)^{\alpha/2}
\log^{-\theta\alpha/2}(e+|y|)}{(1+|y|)^{d+\alpha}}
\exp(t^{-1/(\theta\alpha-1)}),& \\
\qquad\qquad \qquad\qquad \qquad\qquad\qquad\qquad\qquad\qquad\qquad\qquad \text{for all } c_{1.6.2}t_2(x,y)< t\le c_{1.6.3};\\[2pt]
\displaystyle
\frac{\delta_D(x)^{\alpha/2}\log^{-\theta\alpha/2}(e+|x|)}{(1+|x|)^{d+\alpha}}
\frac{\delta_D(y)^{\alpha/2}\log^{-\theta\alpha/2}(e+|y|)}{(1+|y|)^{d+\alpha}}
\exp(-t), \\
\qquad\qquad  \qquad\qquad\qquad\qquad\qquad\qquad\qquad\qquad\qquad\qquad \text{for all } t> c_{1.6.3}.
\end{cases}\end{align*}

\noindent
(ii) Assume that $\theta\le 1/\alpha$. Then,
there exist positive constants $c_{1.6.4}$ and $c_{1.6.5}$ such that
for all $x,y\in D$,
\begin{align*}&p_D(t,x,y)\asymp\\
 &\begin{cases} \displaystyle
\bigg(\frac{\delta_D(x)^{\alpha/2}\left(\log^{-\theta\alpha/2}(e+|x|)\wedge t^{1/2}\right)}
 {t}\wedge 1\bigg)\bigg(\frac{\delta_D(y)^{\alpha/2}\left(\log^{-\theta\alpha/2}(e+|y|)\wedge t^{1/2}\right)}
 {t}\wedge 1\bigg)\\
\qquad\qquad  \qquad\qquad\qquad\qquad\qquad\qquad\qquad\qquad\qquad\qquad \text{for all } 0<t\le c_{1.6.4}t_1(x,y);\\[2pt]
\displaystyle  p(t,x,y)\frac{\delta_D(x)^{\alpha/2}\log^{-\theta\alpha/2}(e+|x|)}{t}\frac{  \delta_D(y)^{\alpha/2}\log^{-\theta\alpha/2}(e+|y|)}{t} \exp\big(-t\log^{\theta\alpha}(e+(|x|\wedge|y|))\big)\\
 \qquad\qquad \qquad\qquad\qquad\qquad\qquad\qquad\qquad\qquad\qquad\qquad \text{for all } c_{1.6.4}t_1(x,y)< t\le
 c_{1.6.5}t_2(x,y);\\[2pt]
 \displaystyle
\frac{  \delta_D(x)^{\alpha/2}\log^{-\theta\alpha/2}(e+|x|)}{(1+|x|)^{d+\alpha}}
\frac{ \delta_D(y)^{\alpha/2}\log^{-\theta\alpha/2}(e+|y|)}{(1+|y|)^{d+\alpha}}
\exp(-t) \\
 \qquad\qquad \qquad\qquad\qquad\qquad\qquad\qquad\qquad\qquad\qquad\qquad \text{for all } t> c_{1.6.5}t_2(x,y).
\end{cases}\end{align*}
 \end{example}
\begin{proof} This  directly follows from Theorem \ref{Main}. Here we give some details on the case that $\theta>1/\alpha$ and
$c_{1.6.2}t_2(x,y)\le t\le c_{1.6.3}$.
For any $t>0$, define
 \begin{align*}
s_1(t)=\inf\{s>0: f(s)^\alpha\log(2+s)\le t\}\vee 2.
\end{align*}
 Then, for $0<t\le 1$,
 $$s_1(t)\asymp \exp\big({t^{-1/(\theta\alpha-1)}}\big).$$
 Hence, for any $c_i>0$ $(1\le i\le 3)$
 and $t\in (0,1]$,
 $$
  \int_0^{c_1s_1(c_2t)}f(s)^{d-1}e^{-c_3tf(s)^{-\alpha}}\,ds
  \asymp \exp\big({t^{-1/(\theta\alpha-1)}}\big).
$$

Indeed, it is clear that for all $t\in (0,1]$,
$$\int_0
 ^{c_1s_1(c_2t)}
 f(s)^{d-1}e^{-c_3tf(s)^{-\alpha}}\,ds\le   (\log 2)^{-\theta(d-1)}\int_{0}^{c_1s_1(c_2t)}\,ds \le c_{4}\exp({c_{5}t^{-1/(\theta\alpha-1)}}).$$
On the other hand, noting that $c_1s_1(c_2t)\ge c_6\exp({c_7 t^{-1/(\theta\alpha-1)}})$ for all $t\in (0,1]$ with some $c_6,c_7>0$ that satisfies $2c_3c_7^{\theta\alpha-1}\le 1$, and also that the function
$s\mapsto f(s)^{d-1}e^{-c_3tf(s)^{-\alpha}}$ is decreasing on $(0,\infty)$, we have
\begin{align*}&\int_{0}^{c_1s_1(c_2t)}
f(s)^{d-1}e^{-c_3tf(s)^{-\alpha}}\,ds\\
&\ge   \int_{0}^{c_6\exp({c_7 t^{-1/(\theta\alpha-1)}})}
f(s)^{d-1}e^{-c_3tf(s)^{-\alpha}}\,ds\\
 &\ge \log^{-\theta(d-1)}\left(2+ c_6\exp({c_7 t^{-1/(\theta\alpha-1)}})\right) \cdot \exp\left(-c_3t \log^{\alpha\theta}(2+ c_6e^{c_7 t^{-1/(\theta\alpha-1)}})\right) \cdot c_6\exp({c_7 t^{-1/(\theta\alpha-1)}})\\
& \ge c_{8}t^{\theta(d-1)/(\theta\alpha-1)} \exp( -c_9t-c_3 c_7^{\theta \alpha}t^{-1/(\theta\alpha-1)})\cdot \exp({c_7 t^{-1/(\theta\alpha-1)}})\\
&\ge c_{10}t^{\theta(d-1)/(\theta\alpha-1)}  \exp\left({\frac{c_7}{2} t^{-1/(\theta\alpha-1)}}\right)\ge  \exp({c_{11} t^{-1/(\theta\alpha-1)}})
\end{align*} for all $t\in (0,1]$.

  With these at hand, we can get the required assertions in Example $\ref{exam}$. \end{proof}

  Note that,
  for this example,
  the associated Dirichlet semigroup  $(P^D_t)_{t\ge0}$ is
intrinsically ultracontractive, if and only if $\theta>1/\alpha$; see \cite[Example 2]{Kw} or \cite[Theorem 1.1(1)]{CKW}.
On the other hand, it is easy to see that $\limsup_{|x|,|y|\to  \infty}t_2(x,y)=0$, if and only if
$\theta>1/\alpha$.
This explains why there is a threshold at $\theta=1/\alpha$ for two-sided estimates of $p_D(t,x,y)$.

\bigskip

The rest of this paper is arranged as follows. The next section serves as  preparations for main proofs. Results in
Sections 2 will be frequently used in the proof of Theorem \ref{Main}. In particular,
upper bound estimates of survival probabilities for full time are presented here. Sections 3, 4 and 5 are devoted to the proof of Theorem \ref{Main}, according to different ranges of time. Proof of
Theorem \ref{Main} and further remarks are briefly given in Section 6.

\section{Preparations}

\subsection{Preliminary estimates}
In this part, we collect some (mostly known) results which
will be frequently used in proofs of  our paper.
Throughout this paper, let $X:=\{X_t,t\ge0; \Pp^x,x\in \R^d\}$ be a (rotationally) symmetric $\alpha$-stable process in $\R^d$ with $d \ge 2$, whose transition density is denoted by  $p(t,x,y)$. For any open subset $U$, let
$X^U$ be
 the subprocess of $X$ killed
upon leaving $U$, whose transition density is denoted by $p_U(t,x,y)$.
Let $\tau_U:=\inf\{t\ge 0: X_t\notin U\}$ be the first exit time from $U$ for the process $X$. It
is well known (cf. see \cite[Lemma 3.2]{CKS1}) that, for any
$\kappa_1,\kappa_2>0$, there exists a constant $c_1:=c_1(\kappa_1,\kappa_2)>0$ such that for
all $x\in \R^d$ and $t>0$,
\begin{equation}\label{l5-1-2}
\Pp^x\left(\tau_{B(x,\kappa_1t^{1/\alpha})}>\kappa_2t\right)\ge c_1.
\end{equation}
Recall that the L\'evy system of $X$ describes the behaviors of jumps for the process $X$.
In particular, given a non-negative function $f:\R_+\times \R^d\times \R^d \to \R_+$ with
$f(s,x,x)=0$ for all $s>0$ and $x\in \R^d$, it holds for any stopping time $\tau$ that
\begin{equation}\label{e2-6}
\Ee^x\left[\sum_{s\le \tau}f\left(s,X_{s-},X_s\right)\right]=
\Ee^x\int_0^\tau\int_{\R^d}f\left(s,X_s,y\right)\frac{c_{d,\alpha}}{|X_s-y|^{d+\alpha}}\,dy\,ds.
\end{equation}
We refer the reader to \cite[Lemma 4.7]{CK} for more details about the property of L\'evy system.
On the other hand, according to \cite[Lemma 2]{Kw}, we have
\begin{lemma}\label{l5-3}
There exists a constant $c_{\ac{1}}\!>0$ such that for any open set $U\subset
\R^d$, $x\in U$ and $t>0$,
\begin{equation}\label{l5-3-1}
\Pp^x\left(\tau_U>
t\right)\le \exp\left(-c_{\ac{1}}\eta_Ut\right),
\end{equation}
where $\eta_U:=\inf_{x\in
U}\displaystyle\int_{U^c}{|x-z|^{-d-\alpha}\,}dz.$
\end{lemma}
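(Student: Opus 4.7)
The plan is to derive a uniform (in $x\in U$) polynomial-type upper bound on the survival probability by exploiting the L\'evy system, and then to promote it to the required exponential bound by iterating over a fine partition of $[0,t]$ via the Markov property and sending the mesh to zero.

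First, I would apply the L\'evy system formula \eqref{e2-6} with the function $f(s,y,z):=\I_{[0,h]}(s)\I_{U^c}(z)$ and the stopping time $\tau_U\wedge h$. Since the process starting in $U$ can only reach $U^c$ by a single jump (occurring at time $\tau_U$), the sum on the left-hand side equals $\I_{\{\tau_U\le h\}}$, so the left-hand side is $\Pp^x(\tau_U\le h)$. On the right-hand side, for every $X_s\in U$ the inner spatial integral is bounded below by $c_{d,\alpha}\eta_U$, by the very definition of $\eta_U$. Combining this with $\tau_U\wedge h\ge h\,\I_{\{\tau_U>h\}}$, I obtain
\begin{equation*}
\Pp^x(\tau_U\le h)\;\ge\;c_{d,\alpha}\,\eta_U\,\Ee^x[\tau_U\wedge h]\;\ge\;c_{d,\alpha}\,\eta_U\,h\,\Pp^x(\tau_U>h),
\end{equation*}
which rearranges to the uniform one-step estimate $\sup_{x\in U}\Pp^x(\tau_U>h)\le(1+c_{d,\alpha}\eta_U h)^{-1}$ for every $h>0$.

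Next, for any integer $n\ge1$, I would apply the Markov property successively at the times $kt/n$, $k=1,\dots,n-1$. On the event $\{\tau_U>(k-1)t/n\}$ the position $X_{(k-1)t/n}$ lies in $U$, so each conditional factor is bounded by $\sup_{y\in U}\Pp^y(\tau_U>t/n)$. An $n$-fold iteration therefore yields
\begin{equation*}
\Pp^x(\tau_U>t)\;\le\;\Big(\sup_{y\in U}\Pp^y(\tau_U>t/n)\Big)^{n}\;\le\;\Big(1+\tfrac{c_{d,\alpha}\eta_U t}{n}\Big)^{-n}.
\end{equation*}
Letting $n\to\infty$ and using the classical limit $(1+a/n)^{-n}\to e^{-a}$ gives $\Pp^x(\tau_U>t)\le e^{-c_{d,\alpha}\eta_U t}$, which is the stated inequality with $c_{\ac{1}}=c_{d,\alpha}$.

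The main (essentially only) subtlety is ensuring that the one-step bound depends on $U$ solely through the single scalar $\eta_U$, which is precisely the role of the pointwise infimum in its definition; once that is in place the remainder is a soft Markov iteration followed by the elementary limit. Note that when $\eta_U=0$ the conclusion degenerates to the trivial bound $\Pp^x(\tau_U>t)\le1$, consistent with the fact that the process need not ever leave such a set.
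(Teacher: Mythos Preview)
The paper does not supply its own proof here; it simply cites \cite[Lemma~2]{Kw}. Your self-contained argument via the L\'evy system and Markov-property iteration is correct and in fact yields the explicit constant $c_{d,\alpha}$.

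Two small points are worth cleaning up. First, your test function $f(s,y,z)=\I_{[0,h]}(s)\I_{U^c}(z)$ does not vanish on the diagonal, as required in the formulation \eqref{e2-6}; use instead $f(s,y,z)=\I_U(y)\I_{U^c}(z)$, which does satisfy $f(s,x,x)=0$ and leaves the right-hand side of the identity unchanged because $X_s\in U$ for $s<\tau_U$. Second, the assertion that the stable process exits $U$ only by a jump is stronger than you need: for your inequality it suffices that the left-hand side of the L\'evy identity is bounded above by $\Pp^x(\tau_U\le h)$, and this holds simply because the sum can record at most the single exit jump and vanishes on $\{\tau_U>h\}$. Both corrections are cosmetic; the structure of the argument is sound.
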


Throughout the remainder of this paper, let $f: \R\to (0,\infty)$ satisfy \eqref{e:f1}, \eqref{e:f2} and \eqref{e:f3}. For fixed
constants $c_* \in (0,
1/5]$ and  $\Lambda >0$, let
$D$ be a horn-shaped $C^{1,1}$ region with the
reference function $f$
so that for all $x \in D$,  $D$ is $C^{1,1}$ at $z_x\in \partial D$ with the  characteristics
$(
5c_*f(x_1), \Lambda)$.

To save notations in the proofs, without loss of generality, we
may assume that the following conditions are satisfied:
\medskip

\noindent
(i) $f(0)\le 2^{-2}$, and for all $x\in D$, $\delta_D(x)\le 2^{-1}$;

\noindent
(ii) $D \cap  \{x\in \R^d:
x_1<2\}  \subset B(0, 2)$, and
$\{x\in D:
x_1>2\}=D^{2}_f$;

\noindent
(iii)(non-uniform) Interior ball condition:\,
for every $x\in D$ and $0<r\le 5c_*f(x_1)$,
$B(\xi^*_{x,r},r)\subset D$, where
$\xi^*_{x,r}:=z_x+r({x-z_x})/{|x-z_x|}$.
\medskip

We remark here that, clearly the arguments below work for general $C^{1,1}$ horn-shape regions without the additional assumptions (i)--(iii).

\begin{lemma}\label{L:2.2} There exists a constant $c_{\ac{1}}>0$ such that for all $x\in D$, $0<t\le c_{\ac{1}} f(x_1)^\alpha$ and $\lambda_i>0$ $(i=1,2,3)$, there is a constant $c_{\ac{2}}:=c_{\ac{2}}(c_{\ac{1}}, \lambda_1, \lambda_2,\lambda_3)$ so that when $\delta_D(x)\ge \lambda_1 t^{1/\alpha}$,
$p_D(t,x,y)\ge c_{\ac{2}} t^{-d/\alpha}$
holds for all $y\in D$ with $\delta_D(y)\ge  \lambda_2 t^{1/\alpha}$ and $|x-y|\le \lambda_3 t^{1/\alpha}$.
\end{lemma}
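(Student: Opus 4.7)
The plan is to prove the lemma by a standard near-diagonal lower bound for the Dirichlet heat kernel of symmetric $\alpha$-stable processes on uniform $C^{1,1}$ open sets (cf. \cite[Theorem 1.1(i)]{CKS1}), applied to a local $C^{1,1}$ subdomain $U \subset D$ of diameter $\simeq f(x_1)$. The hypothesis $t \le c_{\ac{1}} f(x_1)^\alpha$ ensures that $t^{1/\alpha}$ is much smaller than the local $C^{1,1}$ characteristic scale $c_* f(x_1)$ of $D$, so after rescaling $U$ by a factor $\simeq f(x_1)$ we obtain a $C^{1,1}$ open set with \emph{universal} characteristics for which the classical Dirichlet heat kernel lower bound applies with constants not depending on $x$.

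I would first fix $c_{\ac{1}} \in (0,1]$ small enough that $(1+\lambda_1+\lambda_2+\lambda_3)\,t^{1/\alpha} \le c_* f(x_1)/2$ whenever $t \le c_{\ac{1}} f(x_1)^\alpha$. Property \eqref{e:f3} then yields $f(y_1) \simeq f(x_1)$ uniformly over all $y$ satisfying the hypotheses. To construct $U$: if $\delta_D(x) \ge 2 c_* f(x_1)$, take $U := B(x, c_* f(x_1)) \subset D$; otherwise, use the $C^{1,1}$-property of $\partial D$ at $z_x$ to choose a local orthonormal coordinate system in which
\[
D \cap B(z_x, 5 c_* f(x_1)) \;=\; \{w : w_1 > \psi(\widetilde w)\} \cap B(z_x, 5 c_* f(x_1))
\]
for some $C^{1,1}$ function $\psi$ with $\|\nabla\psi\|_\infty \le \Lambda$ and $\mathrm{Lip}(\nabla\psi) \le \Lambda/(5 c_* f(x_1))$, and set $U := \{w_1 > \psi(\widetilde w)\} \cap B(z_x, 4 c_* f(x_1))$. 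In either case $U \subset D$ is a bounded $C^{1,1}$ open set, and the smallness of $c_{\ac{1}}$ ensures $x, y \in U$ with $\delta_U(x) = \delta_D(x)$ and $\delta_U(y) = \delta_D(y)$, because both $x$ and $y$ are at distance at least $c_* f(x_1)$ from the auxiliary spherical part of $\partial U$.

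Using the scaling identity $p_U(t, w_1, w_2) = R^{-d}\, p_{U/R}(t/R^\alpha, w_1/R, w_2/R)$ with $R := c_* f(x_1)$, and noting that $U/R$ is a bounded $C^{1,1}$ open set whose characteristics depend only on $\Lambda$, \cite[Theorem 1.1(i)]{CKS1} together with $t/R^\alpha \le c_{\ac{1}}/c_*^\alpha \le 1$ gives
\[
p_D(t,x,y) \;\ge\; p_U(t,x,y) \;\ge\; c_0\, p(t,x,y)\left(1 \wedge \frac{\delta_U(x)^{\alpha/2}}{\sqrt{t}}\right)\left(1 \wedge \frac{\delta_U(y)^{\alpha/2}}{\sqrt{t}}\right),
\]
with $c_0$ independent of $x$, $y$ and $t$. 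Since $|x-y| \le \lambda_3 t^{1/\alpha}$ gives $p(t,x,y) \ge c\, t^{-d/\alpha}$, and since $\delta_U(x) \ge \lambda_1 t^{1/\alpha}$, $\delta_U(y) \ge \lambda_2 t^{1/\alpha}$ bound the two boundary factors below by positive constants depending only on $\lambda_1,\lambda_2$, we conclude $p_D(t,x,y) \ge c_{\ac{2}}\, t^{-d/\alpha}$. The main technical obstacle is ensuring uniformity of the CKS1 constants as $f(x_1)$ varies over $D$; this is precisely what the above scaling argument resolves, and it is the reason the characteristics $(5 c_* f(x_1), \Lambda)$ of $D$ are written with $f(x_1)$ absorbing all $x$-dependence and $\Lambda$ fixed.
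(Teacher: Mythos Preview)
Your approach---localize to a $C^{1,1}$ subdomain of scale $\simeq f(x_1)$, rescale, and invoke the CKS1 lower bound---is essentially what the paper does (it simply refers to the argument of \cite[Proposition~3.3]{CKS1} or \cite[Proposition~3.6]{CKS}, noting the local characteristics $(5c_*f(x_1),\Lambda)$). The rescaling observation that makes the constants uniform in $x$ is the right one.

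There is, however, a genuine quantifier mismatch. The lemma asserts a single $c_{\ac{1}}$ valid \emph{for all} $\lambda_1,\lambda_2,\lambda_3>0$, whereas you choose $c_{\ac{1}}$ so that $(1+\lambda_1+\lambda_2+\lambda_3)t^{1/\alpha}\le c_*f(x_1)/2$, i.e.\ $c_{\ac{1}}$ depends on the $\lambda_i$. With $c_{\ac{1}}$ fixed independently of $\lambda_3$, the point $y$ (with $|x-y|\le \lambda_3 t^{1/\alpha}$) need not lie in your single chart $U$ once $\lambda_3$ is large, and then $\delta_U(y)=\delta_D(y)$ fails. The CKS1 proof the paper cites handles this via a chain (parabolic Harnack) argument: subdivide $[0,t]$ into $n\asymp (1+\lambda_3)^\alpha$ pieces and connect $x$ to $y$ through intermediate points at mutual distance $\le (t/n)^{1/\alpha}$, each step living in a ball contained in $D$ of radius $\simeq (t/n)^{1/\alpha}$; the resulting $c_{\ac{2}}$ depends on $\lambda_i$ but $c_{\ac{1}}$ does not. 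Your single-domain argument proves the (slightly weaker) variant with $c_{\ac{1}}=c_{\ac{1}}(\lambda_1,\lambda_2,\lambda_3)$, which incidentally suffices for every application in the paper, but not the statement as written.

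A minor correction: with the paper's convention the characteristics $(5c_*f(x_1),\Lambda)$ mean $\mathrm{Lip}(\nabla\psi)\le \Lambda$, not $\Lambda/(5c_*f(x_1))$. After rescaling by $R=c_*f(x_1)$ one gets $\mathrm{Lip}(\nabla\tilde\psi)\le R\Lambda\le \Lambda$ (using $f\le 1$), so your uniformity conclusion still holds, but the displayed bound on $\mathrm{Lip}(\nabla\psi)$ should be adjusted.
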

\begin{proof}Since for any $x\in D$,
$D$ is $C^{1,1}$ at $z_x\in \partial D$ with
the characteristics $(5c_*f(x_1),\Lambda)$, the desired assertion
can be proven by the arguments for the proof of
\cite[Proposition 3.3]{CKS1} or \cite[Proposition 3.6]{CKS}. \end{proof}

The next lemma is partially  motivated by \cite[Lemma 5.4]{KK} and \cite[Lemma 7.4]{GKK}.
\begin{lemma}\label{l5-1}
For every   $\lambda\in (0,1]$, there exists a constant $c_{\ac{1}}:=c_{\ac{1}}(\lambda)>0$ such that for all  $t>0$ and $x\in
D$ with $0<t^{1/\alpha}\le c_*f(x_1)$, there is $\xi^t_x\in D$ so that $B(\xi^t_x, 4\lambda t^{1/\alpha})\subset D$ and
\begin{equation}\label{l5-1-1}
\int_{B(\xi^t_{x}, 2\lambda t^{1/\alpha})}p_D(t,x,z)\,dz\ge c_{\ac{1}}
\left(\frac{\delta_D(x)^{\alpha/2}}{\sqrt{t}}\wedge 1\right),
\end{equation}
where
$$\xi_x^t:=
\begin{cases}
\xi^*_{x,4\lambda t^{1/\alpha}}=z_x+4\lambda
t^{1/\alpha}({x-z_x})/{|x-z_x|} &\text{ when } \delta_D(x)\le 4\lambda t^{1/\alpha};\\
x &\text{ when } \delta_D(x)> 4\lambda t^{1/\alpha}.
\end{cases}
$$
\end{lemma}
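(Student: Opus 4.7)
The plan is to split the analysis according to the two branches in the definition of $\xi_x^t$, corresponding to whether $x$ is ``deep inside'' or ``near the boundary'' of $D$. In both regimes the first task is to verify $B(\xi_x^t, 4\lambda t^{1/\alpha}) \subset D$. When $\delta_D(x) > 4\lambda t^{1/\alpha}$, we have $\xi_x^t = x$ and $B(x, 4\lambda t^{1/\alpha}) \subset B(x, \delta_D(x)) \subset D$ trivially. When $\delta_D(x) \le 4\lambda t^{1/\alpha}$, the radius $r := 4\lambda t^{1/\alpha}$ satisfies $r \le 4c_* f(x_1) < 5c_* f(x_1)$, so the non-uniform interior ball condition (iii) from the standing assumptions, applied at $z_x$, yields $B(\xi_x^t, r) \subset D$.

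In the interior case the target $(\delta_D(x)^{\alpha/2}/\sqrt{t}) \wedge 1$ is of order $1$, and the claim reduces to Lemma~\ref{L:2.2}. Indeed, any $z \in B(x, 2\lambda t^{1/\alpha})$ satisfies $\delta_D(z) \ge \delta_D(x) - 2\lambda t^{1/\alpha} > 2\lambda t^{1/\alpha}$ and $|x-z| \le 2\lambda t^{1/\alpha}$, so Lemma~\ref{L:2.2} (with $\lambda_1 = 4\lambda$, $\lambda_2 = 2\lambda$, $\lambda_3 = 2\lambda$) delivers $p_D(t, x, z) \ge c\, t^{-d/\alpha}$. Integration over a ball of volume $\simeq t^{d/\alpha}$ then produces the required uniform positive constant lower bound.

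For the near-boundary case I would use an inner-ball-plus-scaling device. Set $r := 4\lambda t^{1/\alpha}$ and $B_0 := B(\xi_x^t, r) \subset D$; because $B_0$ is tangent to $\partial D$ at $z_x$, one has $\delta_{B_0}(x) = \delta_D(x)$, and any $z$ in the target ball $B(\xi_x^t, r/2)$ satisfies $\delta_{B_0}(z) \ge r/2$. By domain monotonicity $p_D(t, x, z) \ge p_{B_0}(t, x, z)$, and the scaling of the symmetric $\alpha$-stable process gives
\[
p_{B_0}(t, x, z) = r^{-d}\, p_{B(0,1)}\!\left(t/r^\alpha,\, (x - \xi_x^t)/r,\, (z - \xi_x^t)/r\right).
\]
Crucially $t/r^\alpha = (4\lambda)^{-\alpha}$ depends only on $\lambda$, so \cite[Theorem~1.1(i)]{CKS1} applied to the unit ball at this fixed time contributes a boundary factor $\simeq (\delta_D(x)/r)^{\alpha/2}$ for the rescaled $x$, a factor $\simeq 1$ for the rescaled $z$, and a free-heat-kernel factor $\simeq 1$; hence $p_{B_0}(t, x, z) \ge c\, t^{-d/\alpha}\, \delta_D(x)^{\alpha/2}/\sqrt{t}$ uniformly for $z$ in the target, and integration over $B(\xi_x^t, 2\lambda t^{1/\alpha})$ produces the advertised lower bound.

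The main obstacle is that the inner ball $B_0$ shrinks with $t$ and its $C^{1,1}$ characteristics degenerate as $t \to 0$; a naive application of the boundary estimate of \cite{CKS1} directly on $B_0$ would give comparison constants depending on $t$ and $x_1$. The scaling reduction to $B(0,1)$ at the fixed time $(4\lambda)^{-\alpha}$ is precisely the device that sidesteps this issue and yields the asserted constant $c_{\ac{1}}(\lambda)$ depending only on $\lambda$ (and on $d,\alpha$).
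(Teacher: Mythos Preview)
Your proof is correct and follows essentially the same approach as the paper: split into the interior and near-boundary cases, and in the latter compare with the Dirichlet heat kernel of an interior ball tangent at $z_x$ and invoke the \cite{CKS1} estimate. The only cosmetic differences are that in the interior case the paper uses the survival-probability lower bound \eqref{l5-1-2} instead of Lemma~\ref{L:2.2}, and in the near-boundary case the paper works with the slightly larger ball $B(\tilde\xi_x^t,5\lambda t^{1/\alpha})$ and cites \eqref{hk-s} directly (the scale-invariance you spell out explicitly is implicit there, since $t/(5\lambda t^{1/\alpha})^\alpha$ is a fixed constant depending only on $\lambda$).
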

\begin{proof}
Fix $\lambda\in (0,1]$.
If
$\delta_D(x)> 4\lambda t^{1/\alpha}$, then $B(x,4\lambda
t^{1/\alpha})\subset D$, and so
\begin{align*}
\int_{B(x, 2\lambda t^{1/\alpha})}p_D(t,x,z)\,dz&
\ge \int_{B(x, 2\lambda t^{1/\alpha})}p_{B(x, 2\lambda t^{1/\alpha})}(t,x,z)\,dz=\Pp^x\big(\tau_{B(x, 2\lambda t^{1/\alpha})}>t\big)\ge c_4,
\end{align*}
where the last inequality follows from \eqref{l5-1-2} with
$\kappa_1=2\lambda$ and $\kappa_2=1$. Thus \eqref{l5-1-1} holds for this case.

Now, we turn to the case that $\delta_D(x)\le 4\lambda t^{1/\alpha}$.
Since $5\lambda t^{1/\alpha}\le 5c_*f(x_1)$, according to the (non-uniform) interior ball condition of $D$,
 $B(\xi^t_{x}, 4\lambda t^{1/\alpha})\subset D$ and $B(\tilde\xi^t_{x}, 5\lambda
t^{1/\alpha})\subset D$ with
$\tilde \xi^t_{x}:=\xi^*_{x,5\lambda t^{1/\alpha}}=z_x+5\lambda
t^{1/\alpha}{x-z_x}/{|x-z_x|}.$
In particular,  $B(\xi^t_{x},2\lambda
t^{1/\alpha})\subset B(\tilde \xi^t_{x},
5\lambda
t^{1/\alpha})\subset D$.
Since $\delta_D(x)\le 4\lambda t^{1/\alpha}$, we have  $x\in B(\tilde \xi^t_{x},
5\lambda t^{1/\alpha})$ with $\delta_{B(\tilde \xi^t_{x},
5\lambda t^{1/\alpha})}(x)= \delta_D(x)$, and, for any $z\in B(\xi^t_{x},2\lambda
t^{1/\alpha})$, $|x-z|\le|x-\xi^t_{x}|+|\xi^t_{x}-z|\le c_1 t^{1/\alpha}$ and $\delta_{B(\tilde \xi^t_{x},
5\lambda t^{1/\alpha})}(z)\ge 2\lambda t^{1/\alpha}$.
Thus, according to \eqref{hk-s},
\begin{align*}
&\int_{B(\xi^t_{x}, 2\lambda t^{1/\alpha})}p_D(t,x,z)\,dz \ge
\int_{B(\xi^t_{x}, 2\lambda t^{1/\alpha})}p_{B(\tilde \xi^t_{x},
5\lambda t^{1/\alpha})}(t,x,z)\,dz\\
&\ge
c_2\frac{\delta_{B(\tilde \xi^t_{x},
5\lambda t^{1/\alpha})}(x)^{\alpha/2}}{\sqrt{t}}
t^{-d/\alpha}
\int_{B(\xi^t_{x}, 2\lambda t^{1/\alpha})}\frac{\delta_{B(\tilde \xi^t_{x},
5\lambda t^{1/\alpha})}(z)^{\alpha/2}}{\sqrt{t}}\,dz\ge c_3\frac{\delta_D(x)^{\alpha/2}}{\sqrt{t}}.
\end{align*}
\end{proof}

\begin{lemma}\label{l5-2}
There exist constants $c_{\ac{1}}\in (0,1)$ and $ c_{\ac{2}}>0$ such that for all  $t>0$  and $x\in D$ with $0<t \le
c_{\ac{1}}f(x_1)^\alpha$,
\begin{equation}\label{l5-2-1}
\Pp^x\big(\tau_D>t\big)\le
c_{\ac{2}}\left(\frac{\delta_D(x)^{\alpha/2}}{\sqrt{t}}\wedge1\right).
\end{equation}
\end{lemma}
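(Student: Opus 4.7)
\textbf{Proof plan for Lemma \ref{l5-2}.} When $\delta_D(x) \ge t^{1/\alpha}$ the right-hand side equals $1$ and the inequality is trivial, so I will focus on the regime $\delta_D(x) < t^{1/\alpha}$. Choosing $c_{\ac{1}}$ small enough that the hypothesis $t \le c_{\ac{1}} f(x_1)^\alpha$ forces $t^{1/\alpha}$ to be much smaller than the $C^{1,1}$ localization radius $5c_\ast f(x_1)$ at $z_x$, all scales relevant to the event $\{\tau_D > t\}$ are confined to the patch $B(z_x, 5c_\ast f(x_1))$ where $D$ is described by a $C^{1,1}$ graph. The strategy is to reduce the estimate to the known uniformly $C^{1,1}$ survival bound through a local comparison with a half-space-like reference set.

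In a local orthonormal coordinate system centered at $z_x$ one has the graph representation $D \cap B(z_x, 5c_\ast f(x_1)) = \{y_1 > \psi(\tilde y)\} \cap B(z_x, 5c_\ast f(x_1))$ for some $\psi \in C^{1,1}$ with $\psi(0) = \nabla \psi(0) = 0$ and gradient Lipschitz constant at most $\Lambda$. Extending $\psi$ by a standard cutoff to a globally $C^{1,1}$ function $\tilde\psi$ on $\mathbb{R}^{d-1}$ with comparable constants and setting $\tilde D_x := \{y_1 > \tilde\psi(\tilde y)\}$, one obtains a half-space-like $C^{1,1}$ open set whose characteristics, after the natural rescaling by $f(x_1)$, are uniformly bounded in $x$ (because $f(x_1) \le f(0) < \infty$). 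Integrating the two-sided heat kernel estimate from \cite[Theorem 1.2]{CT} then yields, with a constant $C$ independent of $x$,
$$
\Pp^y(\tau_{\tilde D_x} > s) \le C \bigl( \delta_{\tilde D_x}(y)^{\alpha/2}/\sqrt{s} \wedge 1 \bigr), \qquad y \in \tilde D_x,\ s > 0,
$$
and by construction $\delta_{\tilde D_x}(x) = \delta_D(x)$ since $D$ and $\tilde D_x$ coincide on $B(z_x, 5c_\ast f(x_1))$.

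Setting $R_0 := 2 c_\ast f(x_1)$ and $T := \tau_{B(x, R_0)}$, the inclusion $B(x, R_0) \subset B(z_x, 5c_\ast f(x_1))$ holds provided $c_{\ac{1}}$ is small. Splitting
$$
\Pp^x(\tau_D > t) \le \Pp^x(\tau_D > t,\, T \ge t) + \Pp^x(\tau_D > t,\, T < t),
$$
the first term is controlled by $\Pp^x(\tau_{\tilde D_x} > t) \le C (\delta_D(x)^{\alpha/2}/\sqrt{t})$, since on $\{T \ge t\}$ the trajectory stays in the patch where $D$ and $\tilde D_x$ agree. The main obstacle is the complementary ``large-jump'' term: the naive estimate $\Pp^x(T < t) \le C t/R_0^\alpha \le C c_{\ac{1}}$ is merely $O(1)$, whereas the required control is $O(\delta_D(x)^{\alpha/2}/\sqrt{t})$. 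To close this gap, I plan to apply the strong Markov property at $T$ together with the L\'evy system \eqref{e2-6} to express
$$
\Pp^x(\tau_D > t,\, T < t) = \Ee^x\!\int_0^{T \wedge t}\!\!\int_{D \setminus B(x, R_0)} \frac{c_{d,\alpha}\,\Pp^y(\tau_D > t - s)}{|X_s - y|^{d+\alpha}}\, dy\, ds,
$$
insert a tentative bound of the form $\Pp^y(\tau_D > t-s) \le K (\delta_D(y)^{\alpha/2}/\sqrt{t-s} \wedge 1)$, and bootstrap. The delicate point, which I anticipate being the hardest part, is extracting a factor of $\delta_D(x)^{\alpha/2}/\sqrt{t}$ (rather than only $\sqrt{t}/R_0^\alpha$) out of the resulting integral: this requires combining the horn-shaped control $\delta_D(y) \le c_0 f(y_1)$ with the geometric localization $|X_s - y| \gtrsim R_0$ in a way that the small parameter $c_{\ac{1}}$ is used to force self-improvement rather than simply producing a uniform constant, and it may be necessary to iterate the inequality itself (not just the trivial bound) in order to propagate the correct boundary weight.
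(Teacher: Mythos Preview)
Your bootstrap plan for the large-jump term has a genuine gap: it cannot produce the factor $\delta_D(x)^{\alpha/2}$. In the L\'evy-system expression the only $x$-dependence enters through the law of the path up to $T\wedge t$; inserting $\Pp^y(\tau_D>t-s)\le K\bigl(\delta_D(y)^{\alpha/2}/\sqrt{t-s}\wedge 1\bigr)$ yields boundary decay in the \emph{jump target} $y$, not in $x$, and iterating against the trivial bound only produces a contraction of order $t/R_0^\alpha$ times a uniform constant, leaving an $O(1)$ remainder rather than $O(\delta_D(x)^{\alpha/2}/\sqrt t)$. The boundary weight at $x$ has to come from an exit-time estimate at the starting point; no self-improvement scheme can manufacture it from survival bounds at $y$.

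The paper's route is much shorter and sidesteps both the reference-set construction and the bootstrap. Localize at scale $t^{1/\alpha}$ rather than $f(x_1)$: with $V_t:=B(z_x,2c_3 t^{1/\alpha})\cap D$, the local $C^{1,1}$ structure (via \cite[Theorem~2.6]{KK}) gives $\Ee^x[\tau_{V_t}]\le c\,t^{1/2}\delta_D(x)^{\alpha/2}$. Then
\[
\Pp^x(\tau_D>t)\le \Pp^x(\tau_{V_t}\ge t)+\Pp^x(X_{\tau_{V_t}}\in D),
\]
and \emph{both} summands are bounded by $c\,t^{-1}\Ee^x[\tau_{V_t}]$ --- the first by Markov's inequality, the second via $\Pp^x(X_{\tau_{V_t}}\in B(z_x,2c_3 t^{1/\alpha})^c)\le c\,t^{-1}\Ee^x[\tau_{V_t}]$ from \cite[Lemma~2.4]{CKS-u4}. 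Your decomposition can be repaired in the same spirit: drop the bootstrap and bound the second term directly by $\Pp^x\bigl(X_{\tau_{D\cap B(x,R_0)}}\in B(x,R_0)^c\bigr)\le c\,R_0^{-\alpha}\Ee^x[\tau_{D\cap B(x,R_0)}]\le c\,\delta_D(x)^{\alpha/2}/R_0^{\alpha/2}$, which suffices since $R_0^{\alpha}\ge c\,t$.
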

\begin{proof}
It suffices to prove
\eqref{l5-2-1} for the case $\delta_D(x)\le c_1t^{1/\alpha}$ with
arbitrary fixed $c_1>0$.

On the one hand, note that for any $x\in D$, $D$ is $C^{1,1}$ at $z_x\in
\partial D$ with the characteristics
$(5c_* f(x_1), \Lambda)$. We can follow the proof of \cite[(2.11) in Theorem
2.6]{KK} to
find
constants
$c_2,c_3 \in (0, 1)$
such that for every $x\in D$ and $0<t\le
c_2
f(x_1)^\alpha$
with $\delta_D(x)\le c_3t^{1/\alpha}$,
\begin{equation}\label{l5-2-3}
\Ee^x[\tau_{V_t}]\le c_4t^{1/2}\delta_D(x)^{\alpha/2},
\end{equation}
where  $V_t:=B(z_x,2c_3 t^{1/\alpha})\cap D$.

On the other hand, according to
\cite[Lemma 2.4]{CKS-u4}, it holds that for every
$t>0$ and $x\in D$ with with $\delta_D(x)\le c_3t^{1/\alpha}$,
\begin{equation}\label{l5-2-4}
\Pp^x(X_{\tau_{V_t}}\in D) \le \Pp^x(X_{\tau_{V_t}}\in B(z_x,2c_3 t^{1/\alpha})^c)\le c_5t^{-1}\Ee^x[\tau_{V_t}].
\end{equation}

Combining both estimates above together
yields that, for any $x\in D$ and $0<t\le c_2 f(x_1)^\alpha$ with $\delta_D(x)\le  c_3 t^{1/\alpha}$, (by noting that $f\le 2^{-2}$),
\begin{align*}
\Pp^x(\tau_D>t)&=\Pp^x(\tau_{V_t}\ge t)+\Pp^x(\tau_D>t>\tau_{V_t})\le \Pp^x(\tau_{V_t}\ge t)+\Pp^x(X_{\tau_{V_t}}\in D)\\
&\le
c_6t^{-1}\Ee^x[\tau_{V_t}]\le
c_7\frac{\delta_D(x)^{\alpha/2}}{\sqrt{t}},
\end{align*}proving the desired assertion.
\end{proof}

\begin{lemma}\label{l5-2-0}
For all $\lambda\in(0,1]$, there exist constants $c_{\ac{1}}:=c_{\ac{1}}(\lambda)$ and $c_{\ac{2}}:=c_{\ac{2}}(\lambda)\in (0,1)$ such that for any $t>0$ and $x\in
D$ with $0<t \le c_{\ac{1}}f(x_1)^\alpha$ and
$\delta_D(x)\le \lambda t^{1/\alpha}$,
\begin{equation}\label{l5-2-2}
\Pp^x\big(\tau_{B(z_x,10\lambda t^{1/\alpha})\cap D}>t\big)\ge
c_{\ac{2}}\frac{\delta_D(x)^{\alpha/2}}{\sqrt{t}}.
\end{equation}
\end{lemma}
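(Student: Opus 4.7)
The plan is to recycle the inner ball constructed in the proof of Lemma \ref{l5-1}: once one notices that this inner ball already sits inside $B(z_x,10\lambda t^{1/\alpha})\cap D$, a simple domain-monotonicity argument produces the required lower bound on the localized survival probability.

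Concretely, first I would choose $c_{\ac{1}}(\lambda)$ small enough (say $c_{\ac{1}}(\lambda)\le c_*^\alpha$) so that for $0<t\le c_{\ac{1}}(\lambda) f(x_1)^\alpha$ with $\lambda\in(0,1]$ we have $5\lambda t^{1/\alpha}\le 5c_* f(x_1)$. The non-uniform interior ball condition at $z_x$ with radius $5\lambda t^{1/\alpha}$ then yields the point
\[
\tilde\xi_x^t := z_x + 5\lambda t^{1/\alpha}\frac{x-z_x}{|x-z_x|},\qquad B(\tilde\xi_x^t,5\lambda t^{1/\alpha})\subset D.
\]
Since $|\tilde\xi_x^t-z_x|=5\lambda t^{1/\alpha}$, the triangle inequality forces $B(\tilde\xi_x^t,5\lambda t^{1/\alpha})\subset B(z_x,10\lambda t^{1/\alpha})$. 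Combining these two inclusions gives the key containment
\[
U := B(\tilde\xi_x^t,5\lambda t^{1/\alpha})\;\subset\; B(z_x,10\lambda t^{1/\alpha})\cap D.
\]

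Next, by domain monotonicity of killed transition densities, $p_U(t,x,z)\le p_{B(z_x,10\lambda t^{1/\alpha})\cap D}(t,x,z)$, and integrating gives
\[
\Pp^x\big(\tau_{B(z_x,10\lambda t^{1/\alpha})\cap D}>t\big)\;\ge\;\Pp^x(\tau_U>t)\;\ge\;\int_{B(\xi_x^t,2\lambda t^{1/\alpha})} p_U(t,x,z)\,dz.
\]
Because $\delta_D(x)\le \lambda t^{1/\alpha}<5\lambda t^{1/\alpha}$ and $x$ lies on the segment from $z_x$ to $\tilde\xi_x^t$, one directly checks $\delta_U(x)=\delta_D(x)$; likewise $\delta_U(z)\ge 2\lambda t^{1/\alpha}$ for $z$ in the sub-ball $B(\xi_x^t,2\lambda t^{1/\alpha})\subset U$ (shifting the constants in the definition of $\xi_x^t$ a bit if needed, exactly as in the proof of Lemma \ref{l5-1}). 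The ball $U$ is a $C^{1,1}$ open set with characteristics $(5\lambda t^{1/\alpha},\Lambda')$, so the short-time two-sided estimate \eqref{hk-s} applied to $p_U$ (after rescaling by $t^{1/\alpha}$, so that \eqref{hk-s} is used on a ball of fixed radius $\asymp\lambda$ up to fixed time $\asymp 1$) yields
\[
p_U(t,x,z)\;\ge\; c(\lambda)\,t^{-d/\alpha}\frac{\delta_D(x)^{\alpha/2}}{\sqrt{t}}\qquad\text{for } z\in B(\xi_x^t,2\lambda t^{1/\alpha}).
\]
Integrating over $B(\xi_x^t,2\lambda t^{1/\alpha})$, whose volume is of order $t^{d/\alpha}$, produces the desired bound $c_{\ac{2}}(\lambda)\,\delta_D(x)^{\alpha/2}/\sqrt{t}$.

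There is no real obstacle: the entire content is already present inside the proof of Lemma \ref{l5-1}, and the only new input is the purely geometric observation that the inner ball $B(\tilde\xi_x^t,5\lambda t^{1/\alpha})$ is contained in $B(z_x,10\lambda t^{1/\alpha})\cap D$. The constant $c_{\ac{1}}(\lambda)$ appears only through the requirement $5\lambda t^{1/\alpha}\le 5c_* f(x_1)$, which reflects the non-uniform $C^{1,1}$ characteristic of the horn-shaped region and explains why the threshold on $t$ scales like $f(x_1)^\alpha$.
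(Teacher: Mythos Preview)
Your argument is correct. The paper's own proof is a one-line reference to \cite[Lemma 5.2]{KK}, relying on the local $C^{1,1}$ characteristics $(5c_*f(x_1),\Lambda)$ at $z_x$; what you have written is essentially an explicit, self-contained version of that argument, reusing the interior-ball construction from the proof of Lemma~\ref{l5-1} together with the scaling of the $\alpha$-stable process to apply \eqref{hk-s} on the ball $U=B(\tilde\xi_x^t,5\lambda t^{1/\alpha})$. The only genuinely new step relative to Lemma~\ref{l5-1} is the geometric containment $U\subset B(z_x,10\lambda t^{1/\alpha})\cap D$, and your verification that $\delta_U(x)=\delta_D(x)$ (since $|x-\tilde\xi_x^t|=5\lambda t^{1/\alpha}-\delta_D(x)$) is correct. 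So your approach and the cited one coincide in substance; yours has the advantage of being readable in place without consulting \cite{KK}.
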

\begin{proof}
This follows from
the proof of \cite[Lemma 5.2]{KK},
thanks to the fact that for any $x\in D$,
$D$ is $C^{1,1}$ at $z_x\in \partial D$ with
the characteristics $(5c_*f(x_1),\Lambda)$.
\end{proof}

\begin{lemma}\label{l5-4-2}
There exist constants $c_{\ac{1}} \in (0, 1)$ and $c_{\ac{2}}>0$ such that for all $t>0$
and
$x\in D$  with $\delta_D(x)\le
c_{\ac{1}}t^{1/\alpha}$,
\begin{equation}\label{e:l5-4-2}
\Ee^x\big[\tau_{B(z_x,c_{2.6.1}(t^{1/\alpha}\wedge 1))\cap D}\big]
\le
c_{\ac{2}}\delta_D(x)^{\alpha/2}\big(f(x_1)^{\alpha/2}\wedge t^{1/2}
\big).
\end{equation}
\end{lemma}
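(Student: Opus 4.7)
The plan is to treat two regimes separately according to whether $t$ is small or large compared to $f(x_1)^\alpha$. Set $V:=B(z_x,c_{2.6.1}(t^{1/\alpha}\wedge 1))\cap D$ and fix a small constant $c_0\in(0,1)$ so that $t\le c_0f(x_1)^\alpha$ ensures the hypotheses of Lemma~\ref{l5-2} and of \eqref{l5-2-3} are met, with $c_{2.6.1}$ chosen small enough to guarantee $V\subset V_t$ whenever the first regime holds. In the first regime $t\le c_0 f(x_1)^\alpha$, since $t\le c_0 f(0)^\alpha\le 1$, one has $t^{1/\alpha}\wedge 1=t^{1/\alpha}$, so $V$ is contained in the set $V_t$ appearing in the proof of Lemma~\ref{l5-2}; the bound \eqref{l5-2-3} then gives
\[
\Ee^x[\tau_V]\le \Ee^x[\tau_{V_t}]\le c\,t^{1/2}\delta_D(x)^{\alpha/2},
\]
which is the desired estimate because $t^{1/2}\simeq f(x_1)^{\alpha/2}\wedge t^{1/2}$ in this regime.

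For the complementary regime $t>c_0 f(x_1)^\alpha$, in which $f(x_1)^{\alpha/2}\wedge t^{1/2}\simeq f(x_1)^{\alpha/2}$, I would use the representation
\begin{align*}
\Ee^x[\tau_V]=\int_0^{c_0 f(x_1)^\alpha}\Pp^x(\tau_V>s)\,ds+\int_{c_0 f(x_1)^\alpha}^\infty \Pp^x(\tau_V>s)\,ds=:I_1+I_2.
\end{align*}
For $I_1$, the bound $\Pp^x(\tau_V>s)\le \Pp^x(\tau_D>s)\le c\,\delta_D(x)^{\alpha/2}s^{-1/2}$ furnished by Lemma~\ref{l5-2} integrates to $I_1\le c'\delta_D(x)^{\alpha/2}f(x_1)^{\alpha/2}$. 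For $I_2$, the Markov property at time $c_0 f(x_1)^\alpha$ gives, for $u\ge 0$,
\begin{align*}
\Pp^x(\tau_V>c_0 f(x_1)^\alpha+u)&\le \Pp^x(\tau_V>c_0 f(x_1)^\alpha)\sup_{y\in V}\Pp^y(\tau_V>u)\\
&\le c\,\frac{\delta_D(x)^{\alpha/2}}{f(x_1)^{\alpha/2}}\exp(-c'\eta_V u),
\end{align*}
where Lemma~\ref{l5-2} controls the first factor and Lemma~\ref{l5-3} controls the second. Once the lower bound $\eta_V\ge cf(x_1)^{-\alpha}$ is established, integration in $u$ yields $I_2\le c''\delta_D(x)^{\alpha/2}f(x_1)^{\alpha/2}$, which combined with $I_1$ completes the argument.

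The main obstacle is the uniform geometric lower bound $\eta_V:=\inf_{z\in V}\int_{V^c}|z-w|^{-d-\alpha}\,dw\ge cf(x_1)^{-\alpha}$. Since $V^c\supset D^c$, it suffices to show that $\int_{D^c}|z-w|^{-d-\alpha}\,dw\gtrsim f(x_1)^{-\alpha}$ uniformly in $z\in V$. For any such $z$, the triangle inequality together with $\delta_D(x)\le 1/2$ and $c_{2.6.1}\le 1$ gives $|z-x|\le |z-z_x|+\delta_D(x)\le 2$, hence $|z_1-x_1|\le 2$, and then $f(z_1)\simeq f(x_1)$ follows from \eqref{e:f3} and the consequence $f(s-2)\le cf(s)$ noted after \eqref{e:f3}; in particular $\delta_D(z)\le f(z_1)\simeq f(x_1)$, and similarly $f((z_z)_1)\simeq f(x_1)$ for a closest boundary point $z_z\in\partial D$ to $z$. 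The $C^{1,1}$ property of $D$ at $z_z$ with characteristic $(5c_*f((z_z)_1),\Lambda)$ supplies an exterior ball $B(q,R_0)\subset D^c$ tangent to $\partial D$ at $z_z$ with $R_0\simeq f((z_z)_1)\simeq f(x_1)$, and every point of $B(q,R_0)$ lies within distance $\delta_D(z)+2R_0\simeq f(x_1)$ of $z$; therefore
\begin{align*}
\int_{D^c}\frac{dw}{|z-w|^{d+\alpha}}\ge \int_{B(q,R_0)}\frac{dw}{|z-w|^{d+\alpha}}\ge \frac{|B(q,R_0)|}{(\delta_D(z)+2R_0)^{d+\alpha}}\ge cf(x_1)^{-\alpha},
\end{align*}
which is the required bound. (When $x_1$ is so small that the horn picture degenerates, $f(x_1)\simeq f(0)>0$ is a positive constant and the conclusion is immediate.)
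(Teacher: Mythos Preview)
Your proof is correct. The first regime $t\le c_0 f(x_1)^\alpha$ is handled exactly as in the paper, via \eqref{l5-2-3}. In the second regime $t>c_0 f(x_1)^\alpha$, however, your route diverges from the paper's: the paper simply enlarges the ball to $B(z_x,c_{2.6.1})\cap D$ and invokes \cite[Lemma~6.2]{CKW} as a black box to obtain $\Ee^x[\tau_{B(z_x,c_{2.6.1})\cap D}]\le c\,\delta_D(x)^{\alpha/2}f(x_1-2)^{\alpha/2}$, whereas you give a self-contained argument by splitting $\Ee^x[\tau_V]=\int_0^\infty\Pp^x(\tau_V>s)\,ds$, controlling the small-$s$ part by Lemma~\ref{l5-2} and the tail by the Markov property combined with Lemmas~\ref{l5-2} and~\ref{l5-3}. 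The exterior-ball computation you use to verify $\eta_V\ge c f(x_1)^{-\alpha}$ is essentially the content of \eqref{l5-3-1--} (which the paper records later, citing \cite[Proposition~2.8]{CKW}). Your approach trades the external citation for tools already developed in this section, at the cost of a slightly longer argument; the paper's is shorter but less transparent about why the horn geometry enters.
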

\begin{proof}
Let $c_2, c_3 \in (0,1]$ be the constants in \eqref{l5-2-3}, and set $c_{\ac{1}}=c_3$.
When $0<t\le c_2f(x_1)^{\alpha}$,  \eqref{e:l5-4-2} follows from
\eqref{l5-2-3}.
 If $t>c_2f(x_1)^{\alpha}$, then,
according to
\cite[Lemma 6.2]{CKW}
 (by choosing $c_{\ac{1}}$ small if necessary),
\begin{align*}
\Ee^x\big[\tau_{B(z_x,c_{\ac{1}}(t^{1/\alpha}\wedge 1))\cap D}\big]&\le
\Ee^x\big[\tau_{B(z_x,c_{\ac{1}})\cap D}\big]\le
c_4\delta_D(x)^{\alpha/2}f(x_1-2)^{\alpha}.
\end{align*}
Combining both estimates above with the fact that $f(x_1-2)\le
c_5f(x_1)$ for $x\in D$ immediately yields
\eqref{e:l5-4-2}.
\end{proof}

Recall that $\Psi(t,x)$ is  defined in \eqref{e:DPsi}.
\begin{lemma}\label{l:new}
There exists a constant $c_{\ac{1}}>0$ such that for
every $x, y \in D$ and $t>0$ with $t^{1/\alpha} \le 2 |x-y|$,
$$
p_D(t,x,y) \le c_{\ac{1}} \frac{t}
{|x-y|^{d+\alpha}}\Psi(t,x).$$
\end{lemma}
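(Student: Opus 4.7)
My plan is to treat the two regimes $\delta_D(x)\ge c_{2.6.1}(t^{1/\alpha}\wedge 1)$ and $\delta_D(x)<c_{2.6.1}(t^{1/\alpha}\wedge 1)$ separately. In the easy regime, the horn-shape condition $\delta_D(x)\le c_*f(x_1)$ combined with $\delta_D(x)\ge c_{2.6.1}(t^{1/\alpha}\wedge 1)$ forces $f(x_1)\gtrsim t^{1/\alpha}\wedge 1$ as well, so a direct substitution into the definition of $\Psi$ shows $\Psi(t,x)\ge c_0>0$; the claim then follows from the trivial estimate $p_D\le p$ together with $p(t,x,y)\le c\,t/|x-y|^{d+\alpha}$, which is valid because $t^{1/\alpha}\le 2|x-y|$.

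For the main regime, set $r_0:=c_{2.6.1}(t^{1/\alpha}\wedge 1)$, chosen small enough (decreasing $c_{2.6.1}$ if necessary) so that $r_0\le|x-y|/8$. Applying the strong Markov property at $\tau_U$ with $U:=B(z_x,r_0)\cap D$, together with $p_D\le p$ and the L\'evy system, gives
\begin{equation*}
p_D(t,x,y)\le \Ee^x\int_0^{\tau_U\wedge t}\int_{D\setminus U}p(t-s,z,y)\frac{c_{d,\alpha}\,dz\,ds}{|X_s-z|^{d+\alpha}}.
\end{equation*}
Since $r_0\le|x-y|/8$, every $X_s\in U$ satisfies $|X_s-y|\ge 3|x-y|/4$. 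I will split the $z$-integration at $|z-y|=|x-y|/2$. In the ``close-to-$y$'' part, $|X_s-z|\ge|x-y|/4$ combined with $\int p(t-s,\cdot,y)\,dz\le 1$ yields a contribution of $\le c|x-y|^{-(d+\alpha)}\Ee^x[\tau_U]$. In the ``far-from-$y$'' part, the pointwise bound $p(t-s,z,y)\le c\,t/|x-y|^{d+\alpha}$ (valid because $|z-y|\ge|x-y|/2\ge(t-s)^{1/\alpha}/4$) reduces the task to controlling $\Ee^x\int_0^{\tau_U\wedge t}\int_{D\setminus U}|X_s-z|^{-(d+\alpha)}c_{d,\alpha}\,dz\,ds$. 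Decomposing this integral at the radius $2r_0$ around $z_x$: the outer piece $|z-z_x|\ge 2r_0$ admits the uniform lower bound $|X_s-z|\ge r_0$, giving $\int\le cr_0^{-\alpha}$, whereas the annular piece $r_0\le|z-z_x|<2r_0$ is absorbed through the L\'evy-system identity $\Ee^x\int_0^{\tau_U}\int_A|X_s-z|^{-(d+\alpha)}c_{d,\alpha}\,dz\,ds = \Pp^x(X_{\tau_U}\in A)\le 1$. Assembling and invoking Lemma~\ref{l5-4-2} for $\Ee^x[\tau_U]\le c\delta_D(x)^{\alpha/2}(f(x_1)^{\alpha/2}\wedge t^{1/2})$, together with the identification $r_0^{-\alpha}\simeq(t\wedge 1)^{-1}$, matches the definition of $\Psi(t,x)$ and yields the claim.

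The principal obstacle is the singular behaviour of $|X_s-z|^{-(d+\alpha)}$ when $X_s$ approaches $\partial U$; the resolution is to perform the strong Markov step at $\tau_V$ with $V:=B(z_x,r_0/2)\cap D\subset U$ rather than at $\tau_U$, which cleanly bounds $|X_s-z|\ge r_0/2$ whenever $X_s\in V$ and $z\in D\setminus U$, leaving only the annular contribution $z\in U\setminus V$ to be tamed by the L\'evy-system probabilistic identity. The most delicate bookkeeping — and the step most sensitive to the specific horn-shape geometry — is verifying that the resulting annular probability is compatible with (indeed absorbed into) the main $\Psi$-weighted estimate, exploiting the narrowness of $D$ near $x$ so that most jumps out of $V$ exit $D$ entirely rather than landing in the portion of the annulus that lies in $D$.
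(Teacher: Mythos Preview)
Your overall strategy---stop the process at the exit time of a small ball around $z_x$, apply the L\'evy system, split according to where the jump lands, and invoke Lemma~\ref{l5-4-2}---is exactly the content of the paper's proof, which however packages the first step by quoting the ready-made inequality \cite[Lemma 5.1]{GKK} (equivalently \cite[Lemma 1.10]{BK} or \cite[Lemma 3.1]{CKS3}). That lemma yields directly
\[
p_D(t,x,y)\le \Pp^x(X_{\tau_{V_1}}\in V_2)\sup_{s\le t,\,z\in V_2}p_D(s,z,y)+c\,(t\wedge\Ee^x[\tau_{V_1}])\sup_{(v,z)\in V_1\times V_3}|v-z|^{-(d+\alpha)},
\]
with $V_1=B(z_x,r_0)\cap D$, $V_3=\{|z-x|\ge|x-y|/2\}$, $V_2=D\setminus(V_1\cup V_3)$; the first factor is then bounded by $\Pp^x(X_{\tau_{V_1}}\in D)\le c\,r_0^{-\alpha}\Ee^x[\tau_{V_1}]$ via \eqref{l5-2-4}, and both terms are controlled through Lemma~\ref{l5-4-2}.

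Your hand-made version is correct up to the annular step, but there you leave a real gap: the crude bound $\Pp^x(X_{\tau_U}\in A)\le 1$ produces only $ct/|x-y|^{d+\alpha}$, missing the factor $\Psi(t,x)$, which can be arbitrarily small. The resolution is \emph{not} a geometric ``narrowness of $D$'' argument. Simply observe that your entire far-from-$y$ integral, \emph{before} any outer/annular split, already satisfies, by the L\'evy system identity itself,
\[
\Ee^x\!\int_0^{\tau_U\wedge t}\!\int_{D\setminus U}\frac{c_{d,\alpha}}{|X_s-z|^{d+\alpha}}\,dz\,ds
=\Pp^x\bigl(X_{\tau_U}\in D\setminus U,\ \tau_U\le t\bigr)
\le\Pp^x(X_{\tau_U}\in D),
\]
and the right-hand side is at most $c\,r_0^{-\alpha}\Ee^x[\tau_U]\le c\,\Psi(t,x)$ by the exit-probability estimate \eqref{l5-2-4} together with Lemma~\ref{l5-4-2}. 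This bound is domain-independent---no horn geometry is needed beyond what is already encoded in $\Ee^x[\tau_U]$. It also makes your outer/annular decomposition, and the passage from $U$ to the smaller set $V$, entirely unnecessary; this is precisely how the paper (through the cited lemma) handles the corresponding term.
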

\begin{proof}
(i) Case 1: $\delta_D(x)\ge 2^{-4}t^{1/\alpha}$.
For any $x,y\in D$ and $t>0$,
$$
p_D(t,x,y) \le p(t,x,y)  \le \frac{c_1t}{|x-y|^{d+\alpha}} \le   \frac{c_2t}{|x-y|^{d+\alpha}}\left(
\frac{\delta_D(x)^{\alpha/2}\left(f(x_1)^{\alpha/2}\wedge t^{1/2}\right)}{t}\wedge1\right),$$
where in the last inequality we used the fact that $2^{-4}t^{1/\alpha}\le \delta_D(x)\le c_3f(x_1)$ for all $x\in D$.

(ii) Case 2: $\delta_D(x)\le 2^{-4}t^{1/\alpha}$. Without loss of generality, we
may assume that the constant $c_{2.6.1}$ in Lemma \ref{l5-4-2} is smaller than $2^{-4}$.
For fixed $x,y \in D$ such that $t^{1/\alpha} \le 2 |x-y|$,  let $V_1=B(z_x,
c_4 c_{2.6.1} (t^{1/\alpha} \wedge 1)) \cap D$ with $c_4\in (0,1)$ small enough, $V_3=\{z\in D: |z-x|\ge |x-y|/2\}$
and $V_2=D\backslash (V_1\cup V_3)$. Since $|z-x|\ge |x-y|/2\ge t^{1/\alpha}/4$ for all $z\in V_3$ and $c_{2.6.1}\le 2^{-4}$, we have  $dist(V_1,V_3)>0$. Then, by
\cite[Lemma 5.1]{GKK} (see \cite[Lemma 1.10]{BK} and \cite[Lemma 3.1]{CKS3}
for the proof)
we find that \begin{align*}
p_D(t,x,y) &\le \Pp^x(\tau_{V_1}\in V_2)\sup_{0\le s\le t, z\in
V_2}p_D(s,z,y) +c_5(t\wedge \Ee^x [\tau_{V_1}])\sup_{v\in V_1,z\in V_3}
\frac{1}{|v-z|^{d+\alpha}}\\
&\le c_6\left(\frac{\Ee^x[\tau_{V_1}]}{t
\wedge1}\wedge1\right)\sup_{0\le s\le t, z\in V_2}p_D(s,z,y) + c_5\left(\frac{\Ee^x[\tau_{V_1}]}{t}\wedge1\right)\sup_{v\in V_1,z\in V_3}\frac{t}{|v-z|^{d+\alpha}}\\
&\le \frac{c_7t}{|x-y|^{d+\alpha}}\left(\frac{\Ee^x[\tau_{V_1}]}{t\wedge1}\wedge1\right)\le \frac{c_8t}{|x-y|^{d+\alpha}}\left(
\frac{\delta_D(x)^{\alpha/2}\left(f(x_1)^{\alpha/2}\wedge t^{1/2}\right)}{t\wedge1}\wedge1\right),
\end{align*}
where the second inequality is due to \eqref{l5-2-4},
in the third inequality we
used the facts that
$$p_D(s,z,y)\le p(s,z,y)\le \frac{c_9s}{|z-y|^{d+\alpha}}\le
\frac{c_{10}t}{|x-y|^{d+\alpha}},\quad z\in V_2, 0<s\le t$$ (thanks to $|z-x|\le |x-y|/2$ for all
$z\in V_2$) and
\begin{align*}
\sup_{v\in V_1, z\in V_3}\frac{1}{|v-z|^{d+\alpha}}\le
\sup_{v\in V_1, z\in V_3}\frac{1}{(|z-x|-|v-x|)^{d+\alpha}}\le\frac{1}{{(|x-y|}/{2}-2^{-4}t^{1/\alpha})^{d+\alpha}}\le
\frac{c_{11}}{|x-y|^{d+\alpha}},
\end{align*} (thanks to the fact that $|x-y|\ge t^{1/\alpha}/2$), and  the fourth inequality follows from
 \eqref{e:l5-4-2}.
 The proof is complete.
\end{proof}

\subsection{Estimate of the survival probability}
In this part, we will present the following
estimate for the survival probability, which extends Lemma \ref{l5-2} for all
$t>0$.

\begin{lemma}\label{e:lm2} There are positive constants $c_{\ac{1}}$ and $c_{\ac{2}}$ such that for any $t>0$ and $x\in D$,
\begin{equation}\label{l3-1-0}
\begin{split}
\Pp^x(\tau_D >
t) \le c_{\ac{1}} \Psi(t,x)\min\Big\{e^{- c_{\ac{2}}f(x_1)^{-\alpha}t}+t(1+|x|)^{-(d+\alpha-1)},
e^{-c_{\ac{2}}t}\Big\}.
\end{split}
\end{equation}
\end{lemma}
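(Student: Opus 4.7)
The plan is to prove the two bounds inside the minimum separately, i.e.\ to establish
$(\mathrm{I})\ \Pp^x(\tau_D>t)\le c\,\Psi(t,x)\,e^{-ct}$ and
$(\mathrm{II})\ \Pp^x(\tau_D>t)\le c\,\Psi(t,x)\,(e^{-cf(x_1)^{-\alpha}t}+t(1+|x|)^{-(d+\alpha-1)})$,
and then take the smaller. In the small-time regime $0<t\le c_{2.2.1}f(x_1)^{\alpha}$ of Lemma \ref{l5-2}, we already have $\Pp^x(\tau_D>t)\le c\Psi(t,x)$; since $t$ is then bounded above by $c\,f(0)^{\alpha}$, every additional multiplicative factor in $(\mathrm{I})$ and $(\mathrm{II})$ is bounded below by a positive constant, and both bounds follow immediately.

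For the global exponential $(\mathrm{I})$ on the long-time regime $t>c_{2.2.1}f(x_1)^{\alpha}$, I would first observe that $\eta_D:=\inf_{y\in D}\int_{D^{c}}|y-z|^{-d-\alpha}dz\ge c_\star>0$: every $y\in D$ has points of $D^{c}$ within a bounded distance (since cross-sections of $D$ have diameter $\le 2f(0)\le 1/2$), which makes that integral uniformly positive. Lemma \ref{l5-3} then yields $\sup_{y\in D}\Pp^y(\tau_D>s)\le e^{-cs}$. Before combining, I would establish the baseline estimate $\Pp^x(\tau_D>s)\le C_0\Psi(s,x)$ for all $s>0$ and $x\in D$, by Markov iteration over time intervals of length $c_{2.2.1}f(x_1)^{\alpha}$ starting from Lemma \ref{l5-2} and absorbing the per-step loss in the $\Psi$-factor into the local exponential decay furnished by Lemma \ref{l5-3} applied to $U_x:=B(x,c_*f(x_1))\cap D$. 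Finally, from the Markov property at time $t/2$,
\[
\Pp^x(\tau_D>t)\le \Pp^x(\tau_D>t/2)\cdot\sup_{y\in D}\Pp^y(\tau_D>t/2)\le C_0\Psi(t/2,x)\,e^{-ct/2}\le C'\Psi(t,x)\,e^{-ct/2},
\]
where I used the slow variation $\Psi(t/2,x)\asymp \Psi(t,x)$, which is immediate from the three cases defining $\Psi$.

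For the bound $(\mathrm{II})$, the decomposition is $\Pp^x(\tau_D>t)\le \Pp^x(\tau_{U_x}>t)+\Pp^x(\tau_{U_x}\le t,\tau_D>t)$ with the same local set $U_x=B(x,c_*f(x_1))\cap D$. Because $\mathrm{diam}(U_x)\lesssim f(x_1)$ we have $\eta_{U_x}\gtrsim f(x_1)^{-\alpha}$, so Lemma \ref{l5-3} gives $\Pp^x(\tau_{U_x}>t)\le e^{-cf(x_1)^{-\alpha}t}$; combined with the baseline from the previous paragraph this produces the first piece of $(\mathrm{II})$. For the complementary event I would apply the strong Markov property at $\tau_{U_x}$ together with the L\'evy system \eqref{e2-6} to get
\[
\Pp^x(\tau_{U_x}\le t,\tau_D>t)=\Ee^x\int_{0}^{\tau_{U_x}\wedge t}\int_{D\setminus U_x}\Pp^z(\tau_D>t-s)\,\frac{c_{d,\alpha}}{|X_s-z|^{d+\alpha}}\,dz\,ds.
\]
I would split $D\setminus U_x$ into the near region $\{c_*f(x_1)\le |z-x|\lesssim 1+|x|\}$ and the far region $\{|z-x|\gtrsim 1+|x|\}$. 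Using $\Pp^z(\tau_D>t-s)\le 1$ the near-region contribution is of order $f(x_1)^{-\alpha}\,\Ee^x[\tau_{U_x}\wedge t]$, which together with Lemma \ref{l5-4-2} (giving $\Ee^x[\tau_{U_x}\wedge t]\lesssim (t\wedge 1)\Psi(t,x)$) yields another $\Psi(t,x)\,e^{-cf(x_1)^{-\alpha}t}$ factor. For the far region, the integral
\[
\sup_{y\in U_x}\int_{D,\ |z-x|\gtrsim 1+|x|}|y-z|^{-d-\alpha}\,dz\ \lesssim\ (1+|x|)^{-(d+\alpha-1)}
\]
is obtained by integrating separately over the bounded part of $D$ and over the tube part, where one gains an extra factor of $(1+|x|)$ by integrating along the horn axis (cross-sections have volume $\lesssim f(z_1)^{d-1}\le f(0)^{d-1}$). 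Multiplying by $\Ee^x[\tau_{U_x}\wedge t]\lesssim t\,\Psi(t,x)$ produces the $t(1+|x|)^{-(d+\alpha-1)}\Psi(t,x)$ term of $(\mathrm{II})$.

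The step I expect to be the main obstacle is producing the sharp geometric decay $(1+|x|)^{-(d+\alpha-1)}$ in the far-jump integral: the naive one-jump rate only gives $|x|^{-(d+\alpha)}$, and one must carefully account for the length-along-the-horn integration to gain the extra factor of $(1+|x|)$ while still enforcing that the destination lies in $D$. A secondary subtlety is the baseline $\Pp^x(\tau_D>t)\le C_0\Psi(t,x)$ used in both (I) and (II): a naive Markov iteration on intervals of length $f(x_1)^{\alpha}$ loses a factor $f(x_1)^{-\alpha}$ per step in the $\Psi$-factor, so one must balance the iteration with the local rate $f(x_1)^{-\alpha}$ of Lemma \ref{l5-3} on $U_x$ to keep a uniform constant.
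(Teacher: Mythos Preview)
Your argument for $(\mathrm{II})$ has a genuine gap in the near-region estimate. After the L\'evy-system decomposition with $U_x=B(x,c_*f(x_1))\cap D$, you bound the near-region contribution by $f(x_1)^{-\alpha}\,\Ee^x[\tau_{U_x}\wedge t]$ using $\Pp^z(\tau_D>t-s)\le 1$, and then claim this ``yields another $\Psi(t,x)\,e^{-cf(x_1)^{-\alpha}t}$ factor''. It does not: that product is of order $f(x_1)^{-\alpha}\cdot\delta_D(x)^{\alpha/2}f(x_1)^{\alpha/2}$ for $t\gtrsim f(x_1)^{\alpha}$, which is a constant when $\delta_D(x)\sim f(x_1)$ and carries no decay in $t$ whatsoever. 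The point is that once the process jumps out of $U_x$ it may land at a point $z$ with $|z-x|\sim 1$, so $f(z_1)\sim f(x_1)$, and you have simply transferred the problem to $z$ without gaining any exponential factor. Bounding $\Pp^z(\tau_D>t-s)$ by $1$ throws away exactly the decay you are trying to prove; an iteration would be needed, but none is set up. The same circularity affects your ``baseline'' $\Pp^x(\tau_D>s)\le C_0\Psi(s,x)$, which you invoke in both $(\mathrm{I})$ and the first piece of $(\mathrm{II})$: this inequality is essentially equivalent to the lemma itself (the decay factor inside the minimum is bounded), and the Markov iteration you sketch does not close because each step loses $f(x_1)^{-\alpha}$ while the per-step gain from Lemma~\ref{l5-3} on $U_x$ is only $e^{-c}$, not $e^{-cf(x_1)^{-\alpha}}$ on the whole of $D$.

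The paper avoids this circularity by a two-pass structure. First it proves the bound \emph{without} the boundary weight $\Psi(t,x)$, namely $\Pp^x(\tau_D>t)\le c\,L(x,t)$ with $L(x,t)=\min\{e^{-cf(x_1)^{-\alpha}t}+t(1+|x|)^{-(d+\alpha-1)},\,e^{-ct}\}$, using the \emph{large} localization set $U=B(x,|x|/2)\cap D$: then exiting $U$ inside $D$ forces $|X_{\tau_U}-x|\ge |x|/2$, and the thin-tube volume bound $|B(x,|x|/3)\cap D|\lesssim f(x_1)^{d-1}|x|$ gives the $(1+|x|)^{-(d+\alpha-1)}$ term directly. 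Only afterwards does it insert $\Psi(t,x)$, now using the \emph{small} boundary set $V_1=B(z_x,c(t^{1/\alpha}\wedge 1))\cap D$, Lemma~\ref{l5-4-2} for $\Ee^x[\tau_{V_1}]$, and---crucially---the already-established first-pass bound $\Pp^z(\tau_D>t/2)\le cL(z,t/2)\sim cL(x,t/2)$ for the continuation from $z\in V_2$ near $x$. Your far-jump observation (gaining one power of $1+|x|$ from the horn's cross-sectional thinness) is correct and is exactly how the paper gets $(1+|x|)^{-(d+\alpha-1)}$; what is missing is this separation into a $\Psi$-free step and a bootstrap step.
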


\begin{proof}
(i) We will first show that for all $t>0$ and $x\in D$,
\begin{equation}\label{l3-1-1}
\Pp^x(\tau_D> t)\le
c_1\min\Big\{e^{-c_2f(x_1)^{-\alpha}t}+t(1+|x|)^{-(d+\alpha-1)},
e^{-c_2t}\Big\}.
\end{equation}
By \cite[(2.10) in Proposition 2.8]{CKW} and the fact that
$\delta_D(x)\le c_3f(x_1)$ for all $x\in D$, we know that for any $U\subset D$ and $z\in U$,
\begin{equation}\label{l5-3-1--}
 \int_{U^c}\frac{1}{|z-y|^{d+\alpha}}\,dy\ge
\int_{D^c} \frac{1}{|z-y|^{d+\alpha}}\,dy\ge
c_4\delta_D(z)^{-\alpha}\ge c_5f(z_1)^{-\alpha}.
\end{equation}
In particular, by \eqref{l5-3-1}, for all $t>0$ and $x\in D$,
$
\Pp^x(\tau_D> t) \le  e^{-c_6 t}.
$
Thus,
in order to verify \eqref{l3-1-1},
we only need to prove that for all $t>0$ and $x
\in D$ with $|x|$ large enough,
\begin{equation}\label{l3-1-1-a}
\Pp^x(\tau_D> t)\le c_1\Big(
e^{-c_2tf(x_1)^{-\alpha}}+t(1+|x|)^{-(d+\alpha-1)}\Big).\end{equation}

For any $x\in D$ with $|x|$ large enough, let $U=B(x,|x|/2)\cap D$. Then,
for $t>0$,
\begin{align*}&\Pp^x\left(\tau_D> t\right)=\Pp^x\left(\tau_U> t\right)+\Pp^x\left(\tau_{D}>
t\ge
\tau_{U}\right)\\
&\le \Pp^x\left(\tau_U> t\right)+\Pp^x\left(X_{\tau_U}\in D, \tau_U\le
t, X_t\in B(x,|x|/3)\cap D\right)  +\Pp^x\left(X_t\in B(x,|x|/3)^c\cap D\right)\\
&=:I_1+I_2+I_3.\end{align*}

First, by \eqref{l5-3-1} and \eqref{l5-3-1--},
\begin{align*}I_1\le  \Pp^x(\tau_U> t)\le  \exp\Big(-c_7t\inf_{z\in D:|z|>|x|/2}f(z_1)^{-\alpha}\Big)
 \le \exp\left(-c_8 f(x_1)^{-\alpha}t\right),\end{align*}where the last
inequality above is due to \eqref{e:f3}.

Second, due to the strong Markov property and \eqref{e:f3},
\begin{align*}I_2&\le
\Ee^x\left[\Pp^{X_{\tau_U}}\left(X_{t-\tau_U}\in B(x,|x|/3)\cap D\right):
\tau_U\le t,X_{\tau_U}\in D
\right]\\
&\le \sup_{0<s\le t,z\in U^c\cap D}\Pp^z(X_s\in B(x,|x|/3)\cap D)\le \sup_{0<s\le t, |z-x|\ge|x|/2}\int_{B(x,|x|/3)\cap D}p(s,z,y)\,dy\\
&\le
\frac{c_{9}t}{|x|^{d+\alpha}}|B(x,|x|/3)\cap D|\le \frac{c_{10}f(x_1)^{d-1}t}{(1+|x|)^{d+\alpha-1}}\le
\frac{c_{11}t}{(1+|x|)^{d+\alpha-1}},\end{align*} where in the
fourth inequality we used the fact that $|z-y|\ge |x|/6$ for any
$x,y,z\in \R^d$ with $|z-x|\ge |x|/2$ and $|y-x|\le |x|/3$ (and
so $p(s,z,y)\le c_{12}s|x|^{-d-\alpha}$ for all $s>0$).

Third,
it holds that
\begin{align*}
I_3&\le \int_{B(x,|x|/3)^c\cap D}p(t,x,z)\,dz \le
\int_{B(x,|x|/3)^c\cap D}\frac{c_{13}t}{|x-z|^{d+\alpha}}\,dz\\
&\le c_{14}
t
\int_{|x|/3}^\infty
\frac{1+f(s)^{d-1}}
{s^{d+\alpha}}\,ds
\le
\frac{c_{15}t}{(1+|x|)^{d+\alpha-1}}.
\end{align*}

Combining all the estimates above, we prove \eqref{l3-1-1-a}, and so
\eqref{l3-1-1} holds true.

(ii) In the following, we set
$$L(x,t)= \min\Big\{e^{-c_2tf(x_1)^{-\alpha}}+t(1+|x|)^{-(d+\alpha-1)}, e^{-c_2t}\Big\}.$$
 We first consider the case $\delta_D(x)\le
c_{2.6.1}  t^{1/\alpha} $ (where $c_{2.6.1}>0$ is the constant in Lemma \ref{l5-4-2}).
Letting  $V_1=B(z_x, c_{2.6.1} (t^{1/\alpha}\wedge
4^{-1}))\cap D$,  we have
\begin{align*}
\Pp^x(\tau_D>
t)\le&\Pp^x(\tau_{V_1}>t/2, \tau_D> t)+\Pp^x(0<\tau_{V_1}\le t/2, X_{\tau_{V_1}}\in D, \tau_D> t)=:J_1+J_2. \end{align*} By the strong Markov property,
\eqref{e:l5-4-2}
and \eqref{l3-1-1}, we get
\begin{align*}
J_1&=\Ee^x[
\I_{\{\tau_{V_1}>t/2\}}\Pp^{X_{t/2}}(\tau_D>t/2)]\le\Pp^x(\tau_{V_1}>t/2)\sup_{z\in V_1}\Pp^z(\tau_D>t/2)\\
&\le c_{16}\left(\frac{\Ee^x[\tau_{V_1}]}{t} \wedge1\right)\sup_{z\in V_1}\Pp^z(\tau_D>t/2)
\le c_{17}\Psi(t,x)\sup_{z\in V_1}L(z,t/2).
\end{align*}

Let $V_3=\{z\in D:|z-x|\ge 1+ |x|/2 \}$ and
$V_2=D\setminus (V_1\cup V_3)$.
If $z \in B(z_x,2^{-1})$, then
$|z-x| \le |z-z_x| +\delta_D(x) \le 2^{-1}+2^{-1} =1$, which implies that $dist(V_1,V_3)>0$.
(Here we note that $\delta_D(x)\le 1/2$ for all $x\in D$
by our assumption).
Using $V_1$, $V_2$ and $V_3$, we bound $J_2$ as
\begin{align*}
J_2
=
& \Ee^x\left[ \Pp^{X_{\tau_{V_1}}}(\tau_D>t/2): 0<\tau_{V_1}\le t/2,X_{\tau_{V_1}}\in V_2\right] \\
&+\Ee^x\left[ \Pp^{X_{\tau_{V_1}}}(\tau_D>t/2):0<\tau_{V_1} \le t/2,X_{\tau_{V_1}}\in V_3\right]=:J_{2,1}+J_{2,2}.\end{align*}
We find that
\begin{align*}
J_{2,1}&\le \Pp^x(X_{\tau_{V_1}}\in V_2) \sup_{z\in V_2}\Pp^z(\tau_D>t/2)\le c_{18}\left(\frac{\Ee^x[\tau_{V_1}]}{t\wedge1} \wedge1\right)\sup_{z\in V_2}L(z,t/2)\\
&\le
c_{19} \Psi(t,x)\sup_{z\in V_2} L(z,t/2),
\end{align*}
where the second inequality above follows from
\eqref{l5-2-4}
and \eqref{l3-1-1}, and the last inequality is
due to \eqref{e:l5-4-2}.

For $J_{2,2}$,  we use the L\'evy system
\eqref{e2-6},
\eqref{l3-1-1} and \eqref{e:l5-4-2} again and obtain that
\begin{align*}
J_{2,2}&\le
c_{20} e^{-c_2t} \Ee^x\Big[\int_0^{\tau_{V_1}\wedge (t/2)}\int_{V_3}\frac{1}{|X_s^{V_1}-z|^{d+\alpha}}\,dz\,ds\Big]\\
&\le c_{21}e^{-c_2t}(\Ee^x[\tau_{V_1}]\wedge
t)\int_{V_3}\frac{dz}{|x-z|^{d+\alpha}}
\le c_{22} e^{-c_2t}\left({\delta_D(x)^{\alpha/2}
 (f(x_1)^{\alpha/2}\wedge t^{1/2} )}\wedge
t\right) \int_{1+|x|/2}^\infty\frac{ds}
{s^{d+\alpha}} \\
&\le c_{23} \Big(\frac{\delta_D(x)^{\alpha/2}
\left(f(x_1)^{\alpha/2}\wedge t^{1/2}\right)}{t}\wedge
1\Big)\frac{t e^{-c_2t}}{(1+|x|)^{d+\alpha-1}}\le c_{24}\Psi(t,x)\min\Big\{\frac{t}{(1+|x|)^{d+\alpha-1}},
e^{-c_2t/2}\Big\}, \end{align*} where in the second inequality we used the fact that for any $y\in V_1$ and $z\in V_3$,
$$|y-z|\ge |z-x|-|x-y|\ge |z-x|-|x-z_x|-|y-z_x|\ge |x-z|-1/2-1/4\ge |x-z|/4.$$

Note that $|z|\le {3|x|}/{2}+1$ for every $z\in V_1\cup V_2$.
Then, by the fact that $f(s-2)\le c_{25}f(s)$ for all $s>0$ and
\eqref{e:f3},
$$\sup_{z\in V_1\cup V_2} L(z,t/2)\le c_{26}\min\Big\{e^{-c_{27}tf(x_1)^{-\alpha}}+t(1+|x|)^{-(d+\alpha-1)}, e^{-c_{27}t}\Big\}.$$
Therefore, the desired assertion \eqref{l3-1-0} for the case
$\delta_D(x)\le c_{2.6.1}
t^{1/\alpha}$
follows from all the estimates
above.

Next, we turn to the case that $\delta_D(x)\ge c_{2.6.1} t^{1/\alpha}$
(which is possible only when $t^{1/\alpha}\le c_{27}f(x_1)$, thanks to the fact that $\delta_D(x)\le c_{28} f(x_1)$ for all $x\in D$).
Then, according to \eqref{l3-1-1}, we have
\begin{align*}
\Pp^x(\tau_D>t)&\le c_{1}\min\left\{e^{-c_{2}f(x_1)^{-\alpha}t}+
t(1+|x|)^{-(d+\alpha-1)},e^{-c_{2}t}\right\}\\
&\le c_{29}\Psi(t,x)\min\left\{e^{-c_{2}f(x_1)^{-\alpha}t}+
t(1+|x|)^{-(d+\alpha-1)},e^{-c_{2}t}\right\},
\end{align*}where in the second inequality we used the fact that
$t^{1/\alpha}\le c_{27}f(x_1)$. Thus, we establish \eqref{l3-1-0}
for all $x\in D$. The proof is complete.
\end{proof}

\ \

From the next section to Section \ref{section5}, we will prove
Theorem \ref{Main}, which is exactly split into three cases according to different ranges of time $t$. By the
symmetry of $p_D(t,x,y)$ with respect to $(x,y)$, without loss of generality,
{\it we will assume
that $x_1\ge y_1$ throughout Sections $3$--$5$.}

\section{Case I: $t\le C_0f(y_1)^\alpha$ for some small constant $C_0>0$}
In this section, we will consider the case that $0<t\le
C_0f(y_1)^{\alpha}$, where $C_0\in (0,1)$ is a small positive constant to be fixed later.

\subsection{Near diagonal estimates, i.e.,
$|x-y|\le t^{1/\alpha}$.
}
\begin{lemma}\label{l2-2}{\bf (Lower bound)}\,
There exist constants $c_{3.1.1},\,c_{3.1,2}\in (0,1)$ such that for all $t>0$ and $x,y\in
D$ with $0<t\le c_{3.1.1} f(y_1)^\alpha$ and $|x-y|\le
 t^{1/\alpha}$,
 $$ p_D(t,x,y)\ge
c_{3.1.2}t^{-d/\alpha}\left(\frac{\delta_D(x)^{\alpha/2}}{\sqrt{t}}\wedge
1\right) \left(\frac{\delta_D(y)^{\alpha/2}}{\sqrt{t}}\wedge
1\right).
$$
\end{lemma}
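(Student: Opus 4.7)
The plan is to use the standard three-step Chapman--Kolmogorov chain argument, combining the interior-ball estimate of Lemma \ref{l5-1} at the endpoints with the uniform lower bound of Lemma \ref{L:2.2} on the middle piece. More precisely, writing $t' = t/3$ and using the semigroup identity twice, I would start from
\begin{equation*}
p_D(t,x,y) \ge \int_{B_x}\!\int_{B_y} p_D(t', x, z_1)\, p_D(t', z_1, z_2)\, p_D(t', z_2, y)\, dz_2\, dz_1,
\end{equation*}
where $B_x = B(\xi_x^{t'}, 2\lambda (t')^{1/\alpha})$ and $B_y = B(\xi_y^{t'}, 2\lambda (t')^{1/\alpha})$ are the interior balls provided by Lemma \ref{l5-1} for a small fixed $\lambda$. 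That lemma already yields
\begin{equation*}
\int_{B_x} p_D(t',x,z_1)\,dz_1 \ge c\!\left(\frac{\delta_D(x)^{\alpha/2}}{\sqrt{t'}}\wedge 1\right),\qquad \int_{B_y} p_D(t',z_2,y)\,dz_2 \ge c\!\left(\frac{\delta_D(y)^{\alpha/2}}{\sqrt{t'}}\wedge 1\right),
\end{equation*}
by the symmetry of $p_D$.

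The key is to show $p_D(t', z_1, z_2) \ge c (t')^{-d/\alpha}$ uniformly for $z_1 \in B_x$ and $z_2 \in B_y$, using Lemma \ref{L:2.2}. I would verify its three hypotheses in turn: (a) $\delta_D(z_i) \ge 2\lambda (t')^{1/\alpha}$ since $B(\xi^{t'}_{\cdot}, 4\lambda (t')^{1/\alpha})\subset D$; (b) the distance bound $|z_1 - z_2| \le C t^{1/\alpha}$ follows from the triangle inequality together with $|\xi_x^{t'} - x| \le 4\lambda (t')^{1/\alpha} + \delta_D(x)$ (and similarly for $y$) and the assumption $|x-y|\le t^{1/\alpha}$, once one observes that $\delta_D(x), \delta_D(y) \le c_0 f(y_1)$ and $t^{1/\alpha} \le c_{3.1.1}^{1/\alpha} f(y_1)$; (c) the reference function condition $t' \le c_{\ac{1}} f((z_i)_1)^\alpha$, which is the only delicate step.

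For (c), since $x_1 \ge y_1$, $|x-y|\le t^{1/\alpha}\le c_{3.1.1}^{1/\alpha} f(y_1) \le 1/2$ under our normalization $f(0)\le 2^{-2}$, and the balls $B_x, B_y$ have radius at most $6\lambda (t')^{1/\alpha}$, every $(z_i)_1$ lies within a bounded distance (less than $2$) of $y_1$. Combining monotonicity of $f$ with the shift property $f(s-2)\le c f(s)$ noted after \eqref{e:f3} then gives $f((z_i)_1) \simeq f(y_1)$, so choosing $c_{3.1.1}>0$ small enough (depending on the Lemma \ref{L:2.2} constant $c_{\ac{1}}$ and on $\lambda$) secures $t'\le c_{\ac{1}} f((z_i)_1)^\alpha$. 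I expect this uniformity-of-$f$ verification to be the only genuinely nontrivial point; the rest is bookkeeping.

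With all three pieces in place, pulling the uniform lower bound $c(t')^{-d/\alpha}$ on the middle factor outside the integral decouples the triple integral into the product of the two endpoint estimates from Lemma \ref{l5-1}, yielding
\begin{equation*}
p_D(t,x,y) \ge c\, (t')^{-d/\alpha} \left(\frac{\delta_D(x)^{\alpha/2}}{\sqrt{t'}}\wedge 1\right)\left(\frac{\delta_D(y)^{\alpha/2}}{\sqrt{t'}}\wedge 1\right).
\end{equation*}
Since $t' = t/3$, absorbing the constants gives the desired bound. The only nonroutine input is the proper calibration of $c_{3.1.1}$ and $\lambda$ so that Lemmas \ref{l5-1} and \ref{L:2.2} both apply to all $z_1, z_2$ in the chosen balls simultaneously; the chain argument itself is parallel to the one underlying \cite[Proposition 3.3]{CKS1}, with $C^{1,1}$ regularity replaced by the semi-uniform interior ball condition on the horn-shaped region.
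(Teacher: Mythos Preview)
Your approach is correct and matches the paper's: a three-step Chapman--Kolmogorov chain with Lemma~\ref{l5-1} at the endpoints and Lemma~\ref{L:2.2} in the middle (the paper first separates the trivial case $\delta_D(y)>3t^{1/\alpha}$, where $B(x,2t^{1/\alpha})\subset D$ and \eqref{hk-s} applies directly, but your uniform treatment via Lemma~\ref{l5-1} already covers that case). One small fix in step~(b): the appeal to $\delta_D(x)\le c_0 f(y_1)$ does not bound $\delta_D(x)$ by $Ct^{1/\alpha}$, since the inequality $t^{1/\alpha}\le c_{3.1.1}^{1/\alpha}f(y_1)$ goes the wrong way; instead simply observe that the definition of $\xi_x^{t'}$ already forces $|\xi_x^{t'}-x|\le 4\lambda(t')^{1/\alpha}$ in both sub-cases (it equals $0$ when $\delta_D(x)>4\lambda(t')^{1/\alpha}$, and otherwise $\delta_D(x)\le 4\lambda(t')^{1/\alpha}$), so the triangle inequality gives $|z_1-z_2|\le C(t')^{1/\alpha}$ directly.
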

\begin{proof}
(i)
Case 1:
$\delta_D(y)> 3 t^{1/\alpha}$.
Since
$B(x,2t^{1/\alpha})\subset D$ in this case, by \eqref{hk-s}
$$
p_D(t,x,y)
\ge p_{B(x,2t^{1/\alpha})}(t,x,y)\ge c_1
t^{-d/\alpha}.
$$

(ii) Case 2: $\delta_D(y)\le 3t^{1/\alpha}$.
It is obvious that $\delta_D(x)\le 4t^{1/\alpha}$. Recall that we have assumed that $f\le 1/4$. Then, $|x_1-y_1| \le |x-y|\le
t^{1/\alpha}\le c_{3.1.1}^{1/\alpha} f(y_1) \le 1/4$, and so
$f(x_1)\simeq f(y_1)$.
In particular, $t\le c_2c_{3.1.1} f(x_1)^\alpha$. Hence, by choosing
$c_{3.1.1} \in (0,1)$
small if necessary, we
get from Lemma \ref{l5-1} that $B(\xi^t_{x},2(t/3)^{1/\alpha})\subset D$,
$B(\xi^t_{y},2(t/3)^{1/\alpha})\subset D$, and
\begin{equation}\label{l2-2-2}
\begin{split}
& \int_{B(\xi^t_{x}, (t/3)^{1/\alpha})}p_D(t/3,x,z)\,dz\ge c_3\frac{\delta_D(x)^{\alpha/2}}{\sqrt{t}},\quad \int_{B(\xi^t_{y}, (t/3)^{1/\alpha})}p_D(t/3,y,z)\,dz\ge
c_3\frac{\delta_D(y)^{\alpha/2}}{\sqrt{t}},
\end{split}
\end{equation}where $\xi^t_{x}:=z_x+2(t/3)^{1/\alpha}({x-z_x})/{|x-z_x|}$ and
$\xi^t_{y}:=z_y+2({t/3})^{1/\alpha}({y-z_y})/{|y-z_y|}$.

On the other hand, for every $z_1\in B(\xi^t_{x}, (t/3)^{1/\alpha})$
and $z_2\in B(\xi^t_{y}, (t/3)^{1/\alpha})$, we have
$\delta_D(z_1)\ge (t/3)^{1/\alpha}$, $\delta_D(z_2)\ge
(t/3)^{1/\alpha}$ and
\begin{align*}
|z_1-z_2|&\le |z_1-\xi^t_{x}|+|\xi^t_{x}-x|+|x-y|+ |\xi^{t}_y-y|
+|z_2-\xi^t_{y}|\le c_4t^{1/\alpha}.
\end{align*}
Thus, by Lemma \ref{L:2.2},
$$p_D(t/3,z_1,z_2)\ge  c_5t^{-d/\alpha},\quad (z_1, z_2) \in B(\xi^t_{x},
(t/3)^{1/\alpha}) \times B(\xi^{t}_y, (t/3)^{1/\alpha}).$$ Combining
this with \eqref{l2-2-2} in turn gives us
\begin{align*} p_D(t,x,y)
&\ge \int_{B(\xi^t_{x},(t/3)^{1/\alpha})}\int_{B(\xi^t_{y},(t/3)^{1/\alpha})}
p_D(t/3,x,z_1)p_D(t/3,z_1,z_2)p_D(t/3,z_2,y)\,dz_1\,dz_2\\
&\ge c_5t^{-d/\alpha}\left(\int_{B(\xi^t_{x},
(t/3)^{1/\alpha})}p_D(t/3,x,z)\,dz\right)
\left(\int_{B(\xi^t_{y},  (t/3)^{1/\alpha})}p_D(t/3,y,z)\,dz\right)\\
&\ge c_6t^{-d/\alpha}\frac{\delta_D(x)^{\alpha/2}}{\sqrt{t}}\frac{\delta_D(y)^{\alpha/2}}{\sqrt{t}}.
\end{align*}

Using both estimates
in (i) and (ii),
we obtain the desired assertion.
\end{proof}

\begin{lemma}\label{l2-3}{\bf (Upper bound)}\,
There exist constants $c_{3.2.1}\in (0,1)$ and $c_{3.2.2}>0$ such that for all $t>0$ and $x,y\in D$ with $0<t\le c_{3.2.1}f(y_1)^\alpha$ and $|x-y|\le t^{1/\alpha}$,
$$
p_D(t,x,y)\le
c_{3.2.2}t^{-d/\alpha}\left(\frac{\delta_D(x)^{\alpha/2}}{\sqrt{t}}\wedge1\right)
\left(\frac{\delta_D(y)^{\alpha/2}}{\sqrt{t}}\wedge1\right).$$
 \end{lemma}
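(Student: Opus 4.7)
The plan is to bootstrap the trivial global bound $p_D(t,x,y)\le p(t,x,y)\le c\,t^{-d/\alpha}$ via two applications of the Chapman--Kolmogorov identity, each of which releases one factor of the survival probability estimate from Lemma \ref{l5-2}.

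Before starting, I will verify that $f(x_1)\simeq f(y_1)$. Under the standing hypotheses, $|x_1-y_1|\le |x-y|\le t^{1/\alpha}\le c_{3.2.1}^{1/\alpha} f(y_1)$, and since $f\le f(0)\le 1/4$, one has $|x_1-y_1|\le 1$ once $c_{3.2.1}$ is taken small enough; combining this with \eqref{e:f2}, \eqref{e:f3} and the consequence $f(s-2)\le c\,f(s)$ yields the comparability $f(x_1)\simeq f(y_1)$. In particular, $t\le c\,f(x_1)^\alpha$, so Lemma \ref{l5-2} may be invoked at both $x$ and $y$ on the time scale $\simeq t$ without further ado.

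Next I will establish the one-sided bound $p_D(t,x,y)\le c_1\,t^{-d/\alpha}\bigl(\tfrac{\delta_D(x)^{\alpha/2}}{\sqrt{t}}\wedge 1\bigr)$. By the Chapman--Kolmogorov identity and the symmetry of $p_D$,
\begin{align*}
p_D(t,x,y)
&=\int_D p_D(t/2,x,z)\,p_D(t/2,z,y)\,dz \\
&\le \|p_D(t/2,\cdot,y)\|_\infty\,\Pp^x(\tau_D>t/2).
\end{align*}
The first factor is at most $c\,t^{-d/\alpha}$ via the free-space bound $p\le c\,t^{-d/\alpha}$, and the second factor is exactly what Lemma \ref{l5-2} controls. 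This yields the one-sided estimate.

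Finally, I will bootstrap by feeding this one-sided bound (now applied at time $t/2$) back into the same decomposition:
\begin{align*}
p_D(t,x,y)
&\le c_1\,(t/2)^{-d/\alpha}\!\left(\frac{\delta_D(x)^{\alpha/2}}{\sqrt{t/2}}\wedge 1\right)\int_D p_D(t/2,z,y)\,dz\\
&= c_1\,(t/2)^{-d/\alpha}\!\left(\frac{\delta_D(x)^{\alpha/2}}{\sqrt{t/2}}\wedge 1\right)\Pp^y(\tau_D>t/2)\\
&\le c_2\,t^{-d/\alpha}\!\left(\frac{\delta_D(x)^{\alpha/2}}{\sqrt{t}}\wedge 1\right)\!\left(\frac{\delta_D(y)^{\alpha/2}}{\sqrt{t}}\wedge 1\right),
\end{align*}
the last inequality being Lemma \ref{l5-2} applied a second time, at $y$. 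This gives the claim with $c_{3.2.2}=c_2$. The only delicate point is the reduction $f(x_1)\simeq f(y_1)$ in the preliminary step, which alone uses the horn-shaped geometry through \eqref{e:f1}--\eqref{e:f3}; once it is secured, the remainder is a routine symmetric Chapman--Kolmogorov bootstrap and presents no serious obstacle.
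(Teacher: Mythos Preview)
Your proof is correct and follows essentially the same approach as the paper: establish $f(x_1)\simeq f(y_1)$ from the near-diagonal condition, then combine Chapman--Kolmogorov with the on-diagonal bound $p_D\le p\le c\,t^{-d/\alpha}$ and two applications of Lemma~\ref{l5-2}. The only cosmetic difference is that the paper does the decomposition in one step at times $t/3,t/3,t/3$ (bounding the middle factor by $c\,t^{-d/\alpha}$ and the two outer integrals by survival probabilities), whereas you iterate the $t/2,t/2$ split twice; the content is identical.
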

\begin{proof}
As
explained in the beginning of part (ii) of the proof for Lemma
\ref{l2-2} above, we can apply Lemma \ref{l5-2}  and obtain that for  all $t>0$ and $x,y\in D$ with $0<t\le c_{3.2.1}f(y_1)^\alpha$ and $|x-y|\le t^{1/\alpha}$ (by choosing $c_{3.2.1}$ small enough if necessary),
it holds that $t\le c_1c_{3.2.1}f(x_1)^\alpha$, and
$$\Pp^x(\tau_D>t/3)\le c_2\left(\frac{\delta_D(x)^{\alpha/2}}{\sqrt{t}}\wedge1\right),\quad \Pp^y(\tau_D>t/3)\le c_2\left(\frac{\delta_D(y)^{\alpha/2}}{\sqrt{t}}\wedge1\right).$$
Hence,
\begin{align*}p_D(t,x,y)&=\int_D\int_Dp_D(t/3,x,z_1)p_D(t/3,z_1,z_2)p_D(t/3,z_2,y)\,dz_1\,dz_2\\
&\le c_3t^{-d/\alpha}\left(\int_Dp_D(t/3,x,z_1)\,dz_1\right)\left(\int_Dp_D(t/3,z_2,y)\,dz_2\right)\\
&=c_3t^{-d/\alpha}\Pp^x(\tau_D>t/3) \Pp^y(\tau_D>t/3)
 \le c_4
t^{-d/\alpha}\left(\frac{\delta_D(x)^{\alpha/2}}{\sqrt{t}}\wedge1\right)\left(\frac{\delta_D(y)^{\alpha/2}}{\sqrt{t}}\wedge1\right).\end{align*}
 The proof is
complete.
\end{proof}
\subsection{Off-diagonal estimates, i.e.,
$|x-y|\ge  t^{1/\alpha}$}
\begin{lemma}\label{l2-4}{\bf (Lower bound when $0<t\le C_0f(x_1)^{\alpha}$.)}\,
There exist constants $c_{3.3.1},\,c_{3.3.2} \in (0,1)$
such that for all $t>0$ and $x,y\in
D$ with $0<t\le c_{3.3.1}f(x_1)^{\alpha}$ and
$|x-y|\ge t^{1/\alpha}$,
$$
p_D(t,x,y)\ge
c_{3.3.2}\frac{t}
{|x-y|^{d+\alpha}}\left(\frac{\delta_D(x)^{\alpha/2}}{\sqrt{t}}\wedge
1\right) \left(\frac{\delta_D(y)^{\alpha/2}}{\sqrt{t}}\wedge
1\right). $$
\end{lemma}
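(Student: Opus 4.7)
The plan is to apply a three-leg Chapman--Kolmogorov decomposition at times $t/3$ and $2t/3$: push from $x$ into a target set $A_x$ sitting well inside $D$, make a single big Lévy jump to an analogous set $A_y$ near $y$, and then push into $y$. The two outer legs will produce the boundary decorations $\delta_D(x)^{\alpha/2}/\sqrt{t}\wedge 1$ and $\delta_D(y)^{\alpha/2}/\sqrt{t}\wedge 1$ through Lemma~\ref{l5-1}, while the middle leg will produce the off-diagonal factor $t/|x-y|^{d+\alpha}$ through an Ikeda--Watanabe first-jump computation driven by the Lévy system \eqref{e2-6}.

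First I would check that both endpoints are accessible to Lemma~\ref{l5-1} at scale $t^{1/\alpha}$. Since $f$ is non-increasing and $x_1\ge y_1$, $f(y_1)\ge f(x_1)$, so $t\le c_{3.3.1}f(x_1)^\alpha\le c_{3.3.1}f(y_1)^\alpha$; choosing $c_{3.3.1}^{1/\alpha}\le c_*$ and applying Lemma~\ref{l5-1} at time $t/3$ with a fixed small $\lambda\in(0,1/16]$ produces points $\xi^t_x,\xi^t_y\in D$ with $B(\xi^t_x,4\lambda(t/3)^{1/\alpha}),B(\xi^t_y,4\lambda(t/3)^{1/\alpha})\subset D$ and
\begin{align*}
\int_{A_x} p_D(t/3,x,z)\,dz\ge c\Big(\tfrac{\delta_D(x)^{\alpha/2}}{\sqrt{t}}\wedge 1\Big),\qquad
\int_{A_y} p_D(t/3,y,z)\,dz\ge c\Big(\tfrac{\delta_D(y)^{\alpha/2}}{\sqrt{t}}\wedge 1\Big),
\end{align*}
where $A_x:=B(\xi^t_x,2\lambda(t/3)^{1/\alpha})$ and $A_y:=B(\xi^t_y,2\lambda(t/3)^{1/\alpha})$.

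The core step is to establish, uniformly over $z_1\in A_x$ and $z_2\in A_y$, the middle bound $p_D(t/3,z_1,z_2)\ge c\,t/|x-y|^{d+\alpha}$. Observe that $\delta_D(z_i)\ge 2\lambda(t/3)^{1/\alpha}$ and, since $|x-y|\ge t^{1/\alpha}$, $|z_1-z_2|\le |x-y|+8\lambda t^{1/\alpha}\le C|x-y|$. I would split on the scale of $|z_1-z_2|$. If $|z_1-z_2|\le \Lambda_0 t^{1/\alpha}$, the interior near-diagonal estimate of Lemma~\ref{L:2.2} gives $p_D(t/3,z_1,z_2)\ge c\,t^{-d/\alpha}$, which dominates $c\,t/|x-y|^{d+\alpha}$ because then $|x-y|\le C't^{1/\alpha}$. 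If instead $|z_1-z_2|\ge \Lambda_0 t^{1/\alpha}$, set $B_1:=B(\xi^t_x,3\lambda(t/3)^{1/\alpha})$ and $B_2:=B(\xi^t_y,3\lambda(t/3)^{1/\alpha})$, both contained in $D$ and disjoint by the choice of $\lambda$; the Ikeda--Watanabe first-jump decomposition yields
\begin{align*}
p_D(t/3,z_1,z_2)\ge\int_0^{t/6}\!\!\int_{B_1}\!\!\int_{B_2} p_{B_1}(s,z_1,u)\,\frac{c_{d,\alpha}}{|u-v|^{d+\alpha}}\,p_D(t/3-s,v,z_2)\,du\,dv\,ds.
\end{align*}
Then I would bound each factor: $|u-v|\le C|x-y|$; $\int_{B_1}p_{B_1}(s,z_1,u)\,du=\Pp^{z_1}(\tau_{B_1}>s)\ge c$ for $s\le t/6$ by \eqref{l5-1-2}; $|B_2|\simeq t^{d/\alpha}$; and $p_D(t/3-s,v,z_2)\ge c\,t^{-d/\alpha}$ via Lemma~\ref{L:2.2} applied to $v,z_2$, which both sit in the interior ball $B(\xi^t_y,4\lambda(t/3)^{1/\alpha})\subset D$ at mutual distance $\lesssim t^{1/\alpha}$. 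Integrating over $s$ produces the claimed $c\,t/|x-y|^{d+\alpha}$.

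Chaining the three legs through Chapman--Kolmogorov,
\begin{align*}
p_D(t,x,y)\ge \int_{A_x}\!\!\int_{A_y} p_D(t/3,x,z_1)\,p_D(t/3,z_1,z_2)\,p_D(t/3,z_2,y)\,dz_1\,dz_2,
\end{align*}
will deliver the lemma. I expect the principal difficulty to be the middle leg in the far regime: one must convert the pointwise Lévy intensity $c_{d,\alpha}|u-v|^{-d-\alpha}$ into an honest lower bound for $p_D$, which requires both that $B_1$ and $B_2$ sit inside $D$ (guaranteed by Lemma~\ref{l5-1}, which in turn uses the non-uniform interior-ball property $B(\xi^*_{x,r},r)\subset D$ for $r\le 5c_*f(x_1)$) and that the final factor $p_D(t/3-s,v,z_2)$ be controlled by the horn-sensitive interior estimate of Lemma~\ref{L:2.2}. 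The other subtle point is bookkeeping the constants so that $|u-v|$ in the denominator is comparable to $|x-y|$ (rather than to the smaller scale $|z_1-z_2|$), which forces the two sub-case split above.
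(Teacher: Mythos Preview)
Your proof is correct. The ingredients are the same as the paper's --- Lemma~\ref{l5-1} for the boundary push-out, the L\'evy system \eqref{e2-6} for the long-range jump, and Lemma~\ref{L:2.2} for the interior near-diagonal bound --- but the organization differs. The paper proceeds asymmetrically through three cases depending on the sizes of $\delta_D(x)$ and $\delta_D(y)$ relative to $t^{1/\alpha}$: when $\delta_D(y)$ is large it forces a jump directly from a set near $x$ (either $B(z_x,ct^{1/\alpha})\cap D$ using the boundary survival Lemma~\ref{l5-2-0}, or a full ball $B(x,ct^{1/\alpha})$) into a ball around $y$, and only when $\delta_D(y)$ is small does it invoke Lemma~\ref{l5-1} on the $y$ side and reduce to the previous cases. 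Your approach instead applies Lemma~\ref{l5-1} symmetrically at both endpoints up front and treats the middle leg uniformly between interior points; this is cleaner, eliminates the case split on $\delta_D(x),\delta_D(y)$, and makes Lemma~\ref{l5-2-0} unnecessary here. The trade-off is that the paper's route is slightly more direct in the easy cases (large $\delta_D(y)$), folding the $x$-side boundary factor and the jump into a single step, while your symmetric reduction always passes through the intermediate interior sets $A_x,A_y$. One small point to keep straight in your write-up: applying Lemma~\ref{L:2.2} at $z_1\in A_x$ or at $v\in B_2$ requires $t/3\le c_{2.2.1}f((z_1)_1)^\alpha$ (resp.\ $f(v_1)^\alpha$); this holds because $|z_1-x|,|v-y|\lesssim t^{1/\alpha}\le c_{3.3.1}^{1/\alpha}f(x_1)\le 1/4$, so $f((z_1)_1)\simeq f(x_1)$ and $f(v_1)\simeq f(y_1)\ge f(x_1)$, and one then chooses $c_{3.3.1}$ accordingly.
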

\begin{proof}
(i) Case 1: $\delta_D(x)\le 40^{-1}t^{1/\alpha}$ and
$\delta_D(y)\ge 2^{-2}t^{1/\alpha}$.
Note that $B(y,2^{-2}t^{1/\alpha})\subset D$ and
$0<t\le
c_{3.3.1}f(x_1)^{\alpha}\le c_{3.3.1}f(y_1)^{\alpha}$.
Then, by choosing $c_{3.3.1}>0$ small enough, we can obtain
from Lemma \ref{l5-2-0} that
\begin{equation}\label{l2-4-2}
\Pp^x\big(\tau_{B(z_x,4^{-1}t^{1/\alpha})\cap D}>t\big)\ge
c_1\frac{\delta_D(x)^{\alpha/2}}{\sqrt{t}}.
\end{equation}
In the following, we set $V_1=B(z_x,2^{-2}t^{1/\alpha})\cap
D$, $V_2=B(y,2^{-3}t^{1/\alpha})$ and
$V_2'=B(y,2^{-4}t^{1/\alpha})$. Since $|x-y|\ge
t^{1/\alpha}$ and $\delta_D(x)\le 40^{-1}t^{1/\alpha}$, we
have $V_1\cap V_2=\emptyset$, and for $(v, z) \in
V_1 \times V_2$,
\begin{equation}\label{l2-4-3}
\begin{split}
|v-z|&\le |v-z_x|+|z_x-x|+|x-y|+|y-z| \le |x-y|+ {3} t^{1/\alpha}/4\le 2|x-y|.
\end{split}
\end{equation} On the other hand, since $\delta_D(y)\ge 2^{-2}t^{1/\alpha}$ and
$t\le c_{3.3.1}f(y_1)^\alpha$,
by choosing $c_{3.3.1} \le c_{2.2.1}$, it follows from  Lemma \ref{L:2.2}
 that
\begin{equation}\label{l2-4-4}
p_D(t/2,z,y)\ge c_3t^{-d/\alpha},\quad z\in V_2,
\end{equation} where we used the fact that $\delta_D(z)\ge 2^{-3}t^{1/\alpha}\ge |z-y|/2$ for all $z\in V_2$.

Therefore,  by the strong Markov property and \eqref{l2-4-4},
\begin{align*}
&p_D(t,x,y)=\Ee^x \left[ p_D(t/2,X_{t/2}^D,y)\right]=\Ee^x\left[p_D(t/2,X_{t/2},y):t/2<\tau_D\right]\\
&\ge \Ee^x\left [p_D(t/2,X_{t/2},y) : 0\le \tau_{V_1}\le
t/4,\,X_{\tau_{V_1}}\in V_2',\,X_s\in V_2 \text{ for all }  s\in
[\tau_{V_1}, \tau_{V_1}+t/2]\right]\\
&\ge c_3 t^{-d/\alpha} \Pp^x\left(0\le \tau_{V_1}\le t/4,X_{\tau_{V_1}}\in V_2',X_s\in V_2\text{ for all } s\in [\tau_{V_1}, \tau_{V_1}+t/2]\right)\\
&\ge c_3 t^{-d/\alpha}\Ee^x\left[\Pp^{X_{\tau_{V_1}}}(\tau_{B(X_{\tau_{V_1}},2^{-4}t^{1/\alpha})}>t/2):
0\le \tau_{V_1}\le t/4,X_{\tau_{V_1}}\in V_2'
\right]\\
&\ge c_3t^{-d/\alpha}\Pp^x\left(0\le \tau_{V_1}\le t/4,X_{\tau_{V_1}}\in V_2' \right)\inf_{z\in V_2'}\Pp^{z}\left(\tau_{B(z,2^{-4}t^{1/\alpha})}>t/2\right)\\
&\ge c_4 t^{-d/\alpha} \int_0^{t/4}\int_{V_1}p_{V_1}(s,x,z)\int_{V_2'}\frac{1}{|z-u|^{d+\alpha}}\,du\,dz\,ds\\
&\ge c_5 \frac{ t^{-d/\alpha}|V_2'|}{|x-y|^{d+\alpha}}\int_0^{t/4}\Pp^x(\tau_{V_1}>s)\,ds \ge \frac{c_6t}{|x-y|^{d+\alpha}}\Pp^x(\tau_{V_1}>t/4)
\ge
\frac{c_7t}{|x-y|^{d+\alpha}}
\cdot\frac{\delta_D(x)^{\alpha/2}}{\sqrt{t}},\end{align*} where in
the fifth inequality we used the L\'evy system
\eqref{e2-6}
and \eqref{l5-1-2}
(since $V_1\cap V_2=\emptyset$), the sixth inequality is due to
\eqref{l2-4-3}, and the last inequality follows from \eqref{l2-4-2}.

(ii) Case 2:
$\delta_D(x)\ge 40^{-1}t^{1/\alpha}$ and
$\delta_D(y)\ge 2^{-2}t^{1/\alpha}$. Following the argument
of part (i) with $V_1$ replaced by
$V_1=B(x,40^{-1}t^{1/\alpha})$ and noting that
$\Pp^x\big(\tau_{V_1}>t/4\big)\ge c_8$, we can  prove that
$$
p_D(t,x,y)\ge \frac{c_{9}t}{|x-y|^{d+\alpha}}.
$$

(iii) Case 3:
$\delta_D(y)\le 2^{-2}t^{1/\alpha}$.
According to Lemma \ref{l5-1} (by choosing $c_{3.3.1}$ small if necessary),
 for every $0<t\le c_{3.3.1}f(y_1)^{\alpha}$, there is
$\xi_y^{t}:=z_y+2^{-1}{(t/2)^{1/\alpha}}({y-z_y})/{|y-z_y|}\in D$
such that $B(\xi_y^{t},2^{-1}{(t/2)^{1/\alpha}})\subset D$,
$|\xi_y^{t}-y|\le |\xi^t_y-z_y|+|z_y-y|\le \frac{3t^{1/\alpha}}{4}$,
and
\begin{equation}\label{l2-4-5}
\int_{B(\xi_y^{t},2^{-2}(t/2)^{1/\alpha})}p_D(t/2,z,y)\,d z\ge c_{10}\frac{\delta_D(y)^{\alpha/2}}{\sqrt{t}}.
\end{equation}
On the other hand, for all $z\in
B(\xi_y^{t},2^{-2}(t/2)^{1/\alpha})$, we have $t \le c_{11}c_{3.3.1}f(z_1)^\alpha$,
$\delta_D(z) \ge \delta_D(\xi_y^{t})-2^{-2}(t/2)^{1/\alpha}\ge 2^{-2}(t/2)^{1/\alpha}$,
\begin{equation}\label{l2-4-6i}|x-z| \ge |x-y|-
|y-\xi_y^{t}|-|\xi_y^{t}-z|
\ge
\frac{1}{4}(1-2^{-\alpha}
)t^{1/\alpha}=
\frac{1}{4}(2^{\alpha}-1)(t/2)^{1/\alpha}\end{equation}
and
\begin{equation}\label{l2-4-6}|x-z| \le |x-y|+
|y-\xi_y^{t}|+|\xi_y^{t}-z|
\le |x-y|+t^{1/\alpha}\le 2|x-y|.
\end{equation}
Hence,
by conclusions in parts (i) and (ii) (after adjusting constants),
we can obtain that for any $ z\in B(\xi_y^{t},2^{-2}(t/2)^{1/\alpha})$,
\begin{align*}
p_D(t/2,x,z)&\ge
\frac{c_{12}t}{|x-z|^{d+\alpha}}\left(\frac{\delta_D(x)^{\alpha/2}}{\sqrt{t}}\wedge
1\right) \ge
\frac{c_{13}t}{|x-y|^{d+\alpha}}\left(\frac{\delta_D(x)^{\alpha/2}}{\sqrt{t}}\wedge
1\right).
\end{align*}
Therefore, putting both estimates
together, we arrive at
\begin{align*}p_D(t,x,y)=&\int_D p_D(t/2,x,z)p_D(t/2,z,y)\,dz
\ge \int_{B(\xi_y^{t},
2^{-2}(t/2)^{1/\alpha})}
p_D(t/2,x,z)p_D(t/2,z,y)\,dz\\
\ge& \frac{c_{13} t}{|x-y|^{d+\alpha}}\left(\frac{\delta_D(x)^{\alpha/2}}{\sqrt{t}}\wedge 1\right)
\int_{B(\xi_y^{t},
2^{-2}(t/2)^{1/\alpha})} p_D(t/2,z,y)\,dz\\
\ge& \frac{c_{14} t}{|x-y|^{d+\alpha}}\left(\frac{\delta_D(x)^{\alpha/2}}{\sqrt{t}}\wedge1\right)
\frac{\delta_D(y)^{\alpha/2}}{\sqrt{t}}.\end{align*}

By all the conclusions above, we can obtain the desired assertion.
\end{proof}

\begin{lemma}\label{l2-5} {\bf (Lower bound when $C_0f(x_1)^\alpha\le
t\le C_0f(y_1)^{\alpha}.)$}\,\,
There exists $c_{3.4.0}\in (0,1)$ such that,
for all  $c_{3.4.1}, c_{3.4.2} \in (0, c_{3.4.0}]$ and for all  $x,y\in D$ and $t>0$ satisfying
$c_{3.4.1}f(x_1)^{\alpha}\le t \le
c_{3.4.2}f(y_1)^{\alpha}$ and $|x-y|\ge t^{1/\alpha}$,
there is  $c_{3.4.3}\in (0,1)$ so that
$$
p_D(t,x,y)\ge
\frac{c_{3.4.3}
t}{|x-y|^{d+\alpha}}\left(\frac{\delta_D(x)^{\alpha/2}f(x_1)^{\alpha/2}}{t}\wedge
1\right) \left(\frac{\delta_D(y)^{\alpha/2}}{\sqrt{t}}\wedge
1\right).
$$
\end{lemma}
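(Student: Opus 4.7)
The plan is to follow the blueprint of the proof of Lemma \ref{l2-4} Case 1, replacing the ``waiting'' time scale $t/4$ by $s_0 := c_0 f(x_1)^\alpha$ for a sufficiently small $c_0 \in (0, c_{3.4.1}/2]$. In the present regime $t \ge c_{3.4.1} f(x_1)^\alpha$, the process cannot survive in a boundary neighborhood of $z_x$ for a time of order $t$; the relevant survival scale is instead $f(x_1)^\alpha$. So after waiting in a boundary ball $V_1 := B(z_x, R_1 f(x_1)) \cap D$ for at most $s_0$, the process must make one jump into a ``good'' region $V_2'$ near $y$ and propagate to $y$ in the remaining time $r := t - \tau_{V_1} \in [t/2, t]$. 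I would take $R_1$ so that Lemma \ref{l5-2-0} applies on time scale $s_0$, and take $V_2' := B(y, r_2 t^{1/\alpha})$ when $\delta_D(y) \ge 2 t^{1/\alpha}$ and $V_2' := B(\xi_y^t, r_2 t^{1/\alpha})$ otherwise, where $\xi_y^t$ is the shifted-inward point from Lemma \ref{l5-1}. Choosing $c_{3.4.1}, c_{3.4.2}, r_2$ small enough guarantees $V_2' \subset D$ with $\delta_D(u) \ge c t^{1/\alpha}$ for $u \in V_2'$, $|V_2'| \simeq t^{d/\alpha}$, and $V_1 \cap V_2' = \emptyset$ with $|z - u| \le 2|x-y|$ for all $z \in V_1,\, u \in V_2'$ (using $|x - y| \ge t^{1/\alpha}$ and $f(x_1) \le (t/c_{3.4.1})^{1/\alpha}$).

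Running the argument of Lemma \ref{l2-4} Case 1 verbatim --- strong Markov at $\tau_{V_1}$, restricted to the event $\{\tau_{V_1} \le s_0,\, X_{\tau_{V_1}} \in V_2'\}$, followed by the L\'evy system identity \eqref{e2-6} --- yields
\begin{equation*}
p_D(t,x,y) \ge c_1 \Big(\inf_{r \in [t/2,\, t],\, u \in V_2'} p_D(r, u, y) \Big) \cdot \frac{|V_2'|}{|x - y|^{d+\alpha}} \cdot \int_0^{s_0} \Pp^x(\tau_{V_1} > s)\, ds.
\end{equation*}
The inner heat kernel is controlled by lemmas already proved in this section: when $V_2'$ is centered at $y$, Lemma \ref{l2-2} applies since $|u - y| \le r^{1/\alpha}$ and $r \le c_{3.4.2} f(y_1)^\alpha \le c_{3.1.1} f(y_1)^\alpha$; when $V_2'$ is centered at $\xi_y^t$, Lemma \ref{l2-4} applies since $|u - y| \ge r^{1/\alpha}$, $r \le c_{3.3.1} f(u_1)^\alpha$ for suitable $c_{3.4.2}$, and $\delta_D(u) \ge c t^{1/\alpha}$ makes the $u$-factor of order one. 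Either way the output is $p_D(r, u, y) \ge c_2 t^{-d/\alpha} (\delta_D(y)^{\alpha/2}/\sqrt t \wedge 1)$. The integrated survival is handled via Lemma \ref{l5-2-0}: using monotonicity of $s \mapsto \Pp^x(\tau_{V_1} > s)$,
$$\int_0^{s_0} \Pp^x(\tau_{V_1} > s)\, ds \ge (s_0/2)\,\Pp^x(\tau_{V_1} > s_0/2) \ge c_3 \sqrt{s_0}\, \delta_D(x)^{\alpha/2} \simeq f(x_1)^{\alpha/2} \delta_D(x)^{\alpha/2}.$$

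Multiplying the three factors and using $|V_2'| \simeq t^{d/\alpha}$ gives
\begin{equation*}
p_D(t, x, y) \ge c_4 \, \frac{\delta_D(x)^{\alpha/2} f(x_1)^{\alpha/2}}{|x-y|^{d+\alpha}} \left( \frac{\delta_D(y)^{\alpha/2}}{\sqrt t} \wedge 1 \right),
\end{equation*}
which matches the claim since $\delta_D(x)^{\alpha/2} f(x_1)^{\alpha/2}/t$ is bounded above by a constant in the present regime (so the capped factor $\delta_D(x)^{\alpha/2} f(x_1)^{\alpha/2}/t \wedge 1$ is equivalent to $\delta_D(x)^{\alpha/2} f(x_1)^{\alpha/2}/t$ up to constants, and a missing factor of $t/t$ supplies the $t$ in the numerator). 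The main obstacle is the joint bookkeeping of $c_{3.4.0}, c_0, R_1, r_2$: they must be chosen so that (i) Lemma \ref{l5-2-0} applies to $V_1$ on time scale $s_0$, (ii) $s_0 \le t/2$ (hence $r \ge t/2$), (iii) $V_1 \cap V_2' = \emptyset$ with $|z-u| \le 2|x-y|$ down to $|x-y| = t^{1/\alpha}$, and (iv) Lemmas \ref{l2-2} and \ref{l2-4} apply on the correct scales. The existence of the common upper bound $c_{3.4.0}$ in the statement is precisely the assertion that all four conditions can be met simultaneously.
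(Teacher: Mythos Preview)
Your approach is essentially the paper's: wait near $z_x$ on the time scale $f(x_1)^\alpha$ (not $t$), make one jump to a good region near $y$, then propagate to $y$ via the already-proved small-time lower bounds. The paper organizes this into three cases ($\delta_D(x)$ small/large with $\delta_D(y)$ large, then $\delta_D(y)$ small handled through $\xi_y^t$); your more unified version, using the strong Markov property at $\tau_{V_1}$ followed directly by a lower bound on $p_D(t-\tau_{V_1},X_{\tau_{V_1}},y)$, is in fact slightly cleaner than the paper's extra ``stay in $V_2$ for time $t/2$'' step.

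There is, however, one genuine bookkeeping gap. When $\delta_D(x)$ is comparable to $f(x_1)$ (which can happen, since $\delta_D(x)\le f(x_1)$ in the horn part of $D$), Lemma~\ref{l5-2-0} does not apply: its hypothesis is $\delta_D(x)\le \lambda s^{1/\alpha}$, and here $s_0^{1/\alpha}\simeq c_0^{1/\alpha}f(x_1)$ with $c_0\le c_{3.4.1}/2$ small. Worse, your $V_1=B(z_x,R_1f(x_1))\cap D$ with $R_1$ of order $c_0^{1/\alpha}$ may then fail to contain $x$ at all, so $\tau_{V_1}=0$ and the argument collapses. You cannot fix this by taking $R_1$ of order $1$: at the endpoint $|x-y|=t^{1/\alpha}=c_{3.4.1}^{1/\alpha}f(x_1)$ the set $V_1$ would then have diameter $\gg |x-y|$ and swallow $V_2'$. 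The paper's remedy is an explicit split: when $\delta_D(x)\ge 10^{-1}(c_{3.4.1}/4)^{1/\alpha}f(x_1)$, replace $V_1$ by the interior ball $B(x,10^{-1}(c_{3.4.1}/4)^{1/\alpha}f(x_1))\subset D$ and bound the survival integral by \eqref{l5-1-2} directly. A second, minor imprecision: when $V_2'$ is centered at $\xi_y^t$, the inequality $|u-y|\ge r^{1/\alpha}$ need not hold (for instance when $\delta_D(y)$ is just below your threshold so that $\xi_y^t$ is close to $y$), but this is harmless since Lemma~\ref{l2-2} covers the complementary range with the same output.
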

\begin{proof}
 We
 may assume that $c_{3.4.1}, c_{3.4.2} \in (0, c_{3.4.0}]$, where $c_{3.4.0}$ is a small positive   constant less than $c_{2.5.1}=c_{2.5.1}(1/10)$ which will be chosen later.

 (i) Case 1:
 $\delta_D(x)\le
{10^{-1}(c_{3.4.1}/4)^{1/\alpha}f(x_1)}$
 and $\delta_D(y)\ge
2^{-2}t^{1/\alpha}$ (and that $c_{3.4.1}f(x_1)^{\alpha}\le t \le
c_{3.4.2}f(y_1)^{\alpha}$ and $|x-y|\ge t^{1/\alpha}$). Set
$V_1=B(z_x,
{(c_{3.4.1}/4)
^{1/\alpha}f(x_1)})\cap D$,
$V_2=B(y,2^{-3}t^{1/\alpha})\subset D$ and
$V_2'=B(y,2^{-4}t^{1/\alpha})$. Since $|x-y|\ge
t^{1/\alpha}$, it is easy to verify that $V_1\cap V_2=\emptyset$.
Then, by the Markov property,
\begin{align*}
&p_D(t,x,y)
=\Ee^x \left[ p_D(t/2,X_{t/2}^D,y)\right]=\Ee^x\left[p_D(t/2,X_{t/2},y): \tau_D> t/2 \right]\\
&\ge \Ee^x\left[p_D(t/2,X_{t/2},y):0\le \tau_{V_1}\le
2^{-2}c_{3.4.1}f(x_1)^\alpha,\,X_{\tau_{V_1}}\in V_2',\,X_s\in
V_2\text{ for all } s\in [\tau_{V_1},
\tau_{V_1}+t/2] \right].\end{align*} According to arguments in part (i)
of the proof of Lemma \ref{l2-4}, \eqref{l2-4-3} and \eqref{l2-4-4}
still hold by choosing $c_{3.4.0}$ less than $c_{2.2.1}$.
Therefore, by the strong Markov property again,
\begin{align*}  p_D(t,x,y)
&\ge c_1 t^{-d/\alpha}\Pp^x\left(0\le \tau_{V_1}\le {c_{3.4.1}f(x_1)^{\alpha}}/{4},
X_{\tau_{V_1}}\in V_2',X_s\in V_2 {\hbox{ for all }} \in [\tau_{V_1}, \tau_{V_1}+t/2]\right)\\
& \ge c_1 t^{-d/\alpha}\Ee^x\left[\Pp^{X_{\tau_{V_1}}}(\tau_{B(X_{\tau_{V_1}}, 2^{-4}t^{1/\alpha})}
>t): 0\le \tau_{V_1}\le
{c_{3.4.1}f(x_1)^{\alpha}}/{4},X_{\tau_{V_1}}\in
V_2'\right]\\
&\ge c_1t^{-d/\alpha}\Pp^x\left(0\le \tau_{V_1}\le {c_{3.4.1}f(x_1)^{\alpha}}/{4}, X_{\tau_{V_1}}\in V_2'\right)\inf_{z\in V_2'}\Pp^{z}
\left(\tau_{B(z,2^{-4}t^{1/\alpha})}>t\right)\\
& \ge c_2 t^{-d/\alpha} \int_0^{2^{-2}c_{3.4.1}f(x_1)^\alpha}\int_{V_1}p_{V_1}(s,x,z)\int_{V_2'}\frac{1}{|z-u|^{d+\alpha}}\,du\,dz\,ds\\
&\ge c_3 t^{-d/\alpha}|V_2'| \frac{1}{|x-y|^{d+\alpha}}\int_0^{2^{-2}c_{3.4.1}f(x_1)^\alpha}\Pp^x(\tau_{V_1}>s)\,ds\\
&\ge \frac{c_{4}f(x_1)^{\alpha}}{|x-y|^{d+\alpha}}\Pp^x\left(\tau_{V_1}>2^{-2}c_{3.4.1}f(x_1)^\alpha\right)\\
&\ge \frac{c_{5}f(x_1)^{\alpha}}{|x-y|^{d+\alpha}}
\cdot\frac{\delta_D(x)^{\alpha/2}}{{f(x_1)^{\alpha/2}}}\ge
c_{6}\left( \frac{\delta_D(x)^{\alpha/2}
f(x_1)^{\alpha/2}}{t}\wedge1\right)
\frac{t}{|x-y|^{d+\alpha}},\end{align*} where in the first
inequality we used \eqref{l2-4-4}, in the fourth inequality we used
the L\'evy system
\eqref{e2-6}
and \eqref{l5-1-2}, the fifth step follows from \eqref{l2-4-3}, and
the seventh inequality is due to \eqref{l5-2-2} with
$t=2^{-2}c_{3.4.1}f(x_1)^\alpha$ and
$\lambda=1/10$.

(ii)
Case 2: $\delta_D(x)\ge
10^{-1}{(c_{3.4.1}/4)^{1/\alpha}f(x_1)}$ and $\delta_D(y)\ge
2^{-2}t^{1/\alpha}$ (and that $c_{3.4.1}f(x_1)^{\alpha}\le t \le
c_{3.4.2}f(y_1)^{\alpha}$ and $|x-y|\ge t^{1/\alpha}$). Following the arguments as in part (i)
with $V_1$ replaced by
$V_1=B(x,10^{-1}{(c_{3.4.1}/4)^{1/\alpha}f(x_1)})$, and using the fact
that $\Pp^x\big(\tau_{V_1}>2^{-2}c_{3.4.1}f(x_1)^\alpha\big)\ge
c_7$, we can prove
$$
p_D(t,x,y)\ge
\frac{c_8f(x_1)^\alpha}{|x-y|^{d+\alpha}}\ge \frac{c_9 t}{|x-y|^{d+\alpha}}\frac{\delta_D(x)^{\alpha/2}f(x_1)^{\alpha/2}}{t},
$$
where in the last inequality above we used
the fact that  $\delta_D(x) \le c_{10}f(x_1) $ for all $x\in D$.

(iii)
Case 3: $\delta_D(y)\le 2^{-2}t^{1/\alpha}$ (and that $c_{3.4.1}f(x_1)^{\alpha}\le t \le
c_{3.4.2}f(y_1)^{\alpha}$ and $|x-y|\ge t^{1/\alpha}$).
Following arguments in part (iii) in the proof of Lemma \ref{l2-4},
we can verify that there exists $\xi_y^{t}\in D$ such that $B(\xi_y^t, 2^{-1}(t/2)^{1/\alpha})\subset D$,
\eqref{l2-4-5}, \eqref{l2-4-6i} and \eqref{l2-4-6} are satisfied, and
$(c_{3.4.1}/2)f(x_1)^{\alpha}\le t/2 \le c_{11}c_{3.4.2}f(z_1)^{\alpha}$ holds for
all $z\in B(\xi_y^{t},2^{-2}(t/2)^{1/\alpha})$. Therefore, according to
conclusions in parts (i) and (ii)
(by adjusting the constant $c_{3.4.0}$ smaller properly if necessary),
we can obtain for all $z\in
B(\xi_y^{t},2^{-2}(t/2)^{1/\alpha})$,
\begin{align*}
p_D(t/2,x,z)&\ge \frac{c_{12}t}{|x-z|^{d+\alpha}}\left(\frac{\delta_D(x)^{\alpha/2}f(x_1)^{\alpha/2}}{t}\wedge 1\right)\ge \frac{c_{13}t}{|x-y|^{d+\alpha}}\left(\frac{\delta_D(x)^{\alpha/2}f(x_1)^{\alpha/2}}{t}\wedge 1\right),
\end{align*} where we used \eqref{l2-4-6} in the last inequality.
Combining this with \eqref{l2-4-5}, we have
\begin{align*}p_D(t,x,y)=&\int_D p_D(t/2,x,z)p_D(t/2,z,y)\,dz
\ge \int_{B(\xi_y^{t},2^{-2}(t/2)^{1/\alpha})} p_D(t/2,x,z)p_D(t/2,z,y)\,dz\\
\ge&   c_{13}\left(\frac{\delta_D(x)^{\alpha/2}f(x_1)^{\alpha/2}}{t}\wedge 1\right)\frac{t}{|x-y|^{d+\alpha}}\int_{B(\xi_y^{t},2^{-2}(t/2)^{1/\alpha})}p_D(t/2,z,y)\,dz\\
\ge& c_{14} \left(\frac{\delta_D(x)^{\alpha/2}f(x_1)^{\alpha/2}}{t}\wedge 1\right)\left(\frac{\delta_D(y)^{\alpha/2}}{\sqrt{t}}\wedge1\right)\frac{t}{|x-y|^{d+\alpha}}.
\end{align*}

Therefore, the desired assertion follows from all the conclusions
above.
\end{proof}

\begin{lemma}\label{l2-6} {\bf(Upper bound)}\,\,
There exist $c_{3.5.1} \in (0,1]$ and $c_{3.5.2}>0$
such that for all $t>0$ and $x,y\in
D$ with $0<t\le c_{3.5.1}f(y_1)^{\alpha}$ and $|x-y|\ge t^{1/\alpha}$,
$$
p_D(t,x,y)\le
c_{3.5.2}\frac{
t}{|x-y|^{d+\alpha}}\left(\frac{\delta_D(x)^{\alpha/2}\left(f(x_1)^{\alpha/2}\wedge
t^{1/2}\right)}{t}\wedge 1\right)
\left(\frac{\delta_D(y)^{\alpha/2}}{\sqrt{t}}\wedge 1\right).$$
\end{lemma}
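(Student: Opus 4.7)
The plan is to combine a two-step Chapman--Kolmogorov decomposition with the ingredients already in hand: the one-sided off-diagonal estimate of Lemma~\ref{l:new} (which carries a $\Psi$-factor on one argument) and the global survival-probability bound of Lemma~\ref{e:lm2} (which supplies the other $\Psi$-factor). More precisely, I would write
\[
p_D(t,x,y)=\int_{D}p_D(t/2,x,z)\,p_D(t/2,z,y)\,dz
\]
and split $D=A\cup B$ with $A:=\{z\in D:|z-y|\ge|x-y|/2\}$ and $B:=D\setminus A$; by the triangle inequality $B\subset\{z:|z-x|\ge|x-y|/2\}$. The assumption $|x-y|\ge t^{1/\alpha}$ guarantees that on each piece the hypothesis $(t/2)^{1/\alpha}\le 2|z-y|$ (resp.\ $\le 2|z-x|$) of Lemma~\ref{l:new} is met.

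On $A$, apply Lemma~\ref{l:new} (together with the symmetry $p_D(\cdot,z,y)=p_D(\cdot,y,z)$) to bound
\[
p_D(t/2,z,y)\le c\,\frac{t/2}{|z-y|^{d+\alpha}}\Psi(t/2,y)\le c\,\frac{t}{|x-y|^{d+\alpha}}\Psi(t,y),
\]
where the last step uses $\Psi(t/2,y)\simeq\Psi(t,y)$ (immediate from the definition \eqref{e:DPsi}). Integrating out the remaining factor turns the $A$-integral into $c\,t|x-y|^{-(d+\alpha)}\Psi(t,y)\,\Pp^x(\tau_D>t/2)$. Symmetrically on $B$, Lemma~\ref{l:new} applied to $p_D(t/2,x,z)$ produces $c\,t|x-y|^{-(d+\alpha)}\Psi(t,x)\,\Pp^y(\tau_D>t/2)$. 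Adding the two contributions gives
\[
p_D(t,x,y)\le c\,\frac{t}{|x-y|^{d+\alpha}}\Big[\Psi(t,y)\,\Pp^x(\tau_D>t/2)+\Psi(t,x)\,\Pp^y(\tau_D>t/2)\Big].
\]

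To finish, invoke Lemma~\ref{e:lm2} to absorb $\Pp^x(\tau_D>t/2)\le c\,\Psi(t,x)$ and $\Pp^y(\tau_D>t/2)\le c\,\Psi(t,y)$ (the extra exponential/polynomial decay factors are trivially bounded by $1$). This yields the symmetric upper bound $p_D(t,x,y)\le c\,t|x-y|^{-(d+\alpha)}\Psi(t,x)\Psi(t,y)$. The constraint $t\le c_{3.5.1}f(y_1)^\alpha$ then forces $t^{1/2}\le c f(y_1)^{\alpha/2}$, so that $\Psi(t,y)$ simplifies to $\frac{\delta_D(y)^{\alpha/2}}{\sqrt t}\wedge 1$, giving exactly the claimed right-hand side.

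The main subtlety is that, unlike the $y$-side, one cannot assume $t\le c f(x_1)^\alpha$ since $f(x_1)\le f(y_1)$ under our convention $x_1\ge y_1$; this is precisely why the near-diagonal survival estimate Lemma~\ref{l5-2} is insufficient for controlling $\Pp^x(\tau_D>t/2)$, and one genuinely needs the full-time Lemma~\ref{e:lm2}, which already delivers the correct factor $f(x_1)^{\alpha/2}\wedge t^{1/2}$ in the $x$-slot of $\Psi$. Once this is recognised, the split above handles all configurations of $\delta_D(x),\delta_D(y)$ uniformly, without further case analysis.
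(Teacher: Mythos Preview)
Your argument is correct and is actually more streamlined than the paper's. The paper does not invoke Lemma~\ref{l:new} and Lemma~\ref{e:lm2} directly; instead it first re-derives the one-sided bound \eqref{e:u0} (essentially Lemma~\ref{l:new} specialised to the $y$-variable under $t\le c f(y_1)^\alpha$), and then applies the decomposition of \cite[Lemma~5.1]{GKK} with $V_1=B(z_x,\varepsilon t^{1/\alpha})\cap D$, $V_3=\{z:|z-x|\ge|x-y|/2\}$, $V_2=D\setminus(V_1\cup V_3)$, bounding the two resulting terms via \eqref{e:l5-4-2}, \eqref{l5-2-4} and \eqref{l5-2-1}, with a separate case when $\delta_D(x)\gtrsim t^{1/\alpha}$ (replacing $V_1$ by a ball centred at $x$). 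Your Chapman--Kolmogorov split at time $t/2$ with the half-space partition $A\cup B$ lets you recycle Lemmas~\ref{l:new} and~\ref{e:lm2} wholesale, so no case analysis on $\delta_D(x)$ is needed and the survival-probability factor on the $x$-side appears in one step. Both proofs ultimately rest on the same exit-time input \eqref{e:l5-4-2}, but yours packages it more economically; the paper's route, by contrast, keeps the individual ingredients visible and does not depend on having already assembled Lemma~\ref{e:lm2}.
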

\begin{proof}
Since $|x-y|\ge
t^{1/\alpha}$,
for every $u\in D$ such that $|u-x|\le |y-x|/2$, we have
$ |y-u|\ge  |x-y|-|u-x|\ge
|x-y|/2\ge t^{1/\alpha}/2$.
Thus, by Lemma \ref{l:new},
we have that for every $s \le t$ and $y,u\in D$ such that $|u-x|\le |y-x|/2$,
\begin{equation}\label{e:u0}
p_D(s,y,u) \le c_1\frac{s}{|u-y|^{d+\alpha}}\left(
\frac{\delta_D(y)^{\alpha/2}}{{s}^{1/2}}\wedge1\right) = c_1 \frac{{s}^{1/2}\delta_D(y)^{\alpha/2}\wedge s}
{|u-y|^{d+\alpha}}.
\end{equation}

We first
consider the case that $\delta_D(x)\le 2^{-4}c_{2.6.1}t^{1/\alpha}$.
Let
$V_1=B(z_x, 2^{-4}c_{2.6.1}
t^{1/\alpha})\cap D$, $V_3=\{z\in D:
|z-x|\ge |x-y|/2\}$ and $V_2=D\backslash(V_1\cup V_3)$. It is easy to check that $dist (V_1,V_3)>0$.
Then,
applying
\cite[Lemma 5.1]{GKK} and \eqref{l5-2-4}, we can get
\begin{align*}
p_D(t,x,y)
&\le c_2\left(\frac{\Ee^x[\tau_{V_1}]}{t}\wedge1\right)\sup_{0\le s\le t, z\in V_2}p_D(s,z,y)\\
&\quad +c_{3} \left(\int_0^t\Pp^x(\tau_{V_1}>s)\Pp^y(\tau_D>t-s)\,ds\right)\sup_{u\in V_1,z\in V_3}\frac{1}{|u-z|^{d+\alpha}}=:I_1+I_2.
\end{align*}
On the one hand, note that $|z-y|\ge
|x-y|-|z-x|\ge |x-y|/2$ for all $z\in V_2.$ Combining \eqref{e:u0}
with
\eqref{e:l5-4-2}
yields that
\begin{align*}
I_1\le &c_{4}\left( \frac{\delta_D(x)^{\alpha/2} (
f(x_1)^{\alpha/2}\wedge t^{1/2})}{t}\wedge1\right)
\sup_{0\le s\le t, z\in V_2}
\frac{({s}^{1/2}\delta_D(y)^{\alpha/2}\wedge s)}
{|z-y|^{d+\alpha}}\\
\le &c_{5}\left( \frac{\delta_D(x)^{\alpha/2} (
f(x_1)^{\alpha/2}\wedge t^{1/2})}{t}\wedge1\right)
\left(
\frac{\delta_D(y)^{\alpha/2}}
{\sqrt{t}}\wedge1\right)\frac{t}{|x-y|^{d+\alpha}}.
\end{align*}
On the other hand, we write
\begin{align*}&\int_0^t\Pp^x(\tau_{V_1}>s)\Pp^y(\tau_D>t-s)\,ds\\
 &\le\int_0^{t/2} \Pp^x(\tau_{V_1}>s)\,ds \,\Pp^y(\tau_D>t/2) +\int_{t/2}^t \Pp^y
(\tau_D>t-s)\,ds\,
\Pp^x(\tau_{V_1}>t/2)=:I_{2,1}+I_{2,2}. \end{align*}  Note that $t\le c_{3.5.1}f(y_1)^{\alpha} \le c_{3.5.1}$, and let $c_{3.5.1}$ small enough if necessary.  By \eqref{l5-2-1} and
\eqref{e:l5-4-2},
\begin{align*}I_{2,1}&\le  c_{6}\left(\frac{\delta_D(y)^{\alpha/2}}{\sqrt{t}}\wedge 1\right)\int_0^{t/2}\Pp^x(\tau_{V_1}>s)\,ds\le c_{7}\left(\frac{\delta_D(y)^{\alpha/2}}{\sqrt{t}}\wedge 1\right) \left(\Ee^x[\tau_{V_1}]\wedge t\right)\\
&\le c_{8}\left(\frac{\delta_D(y)^{\alpha/2}}{\sqrt{t}}\wedge
1\right)\big((\delta_D(x)^{\alpha/2} (f(x_1)^{\alpha}\wedge
t^{1/2}))\wedge t\big). \end{align*} Similarly, also by
\eqref{l5-2-1} and
\eqref{e:l5-4-2},
\begin{align*}I_{2,2}\le&c_{9}\left(\frac{\Ee^x [\tau_{V_1}]}{t}\wedge1\right)\int_0^{t/2}\Pp^y(\tau_D>s)\,ds \le c_{10}\left(\frac{\Ee^x [\tau_{V_1}]}{t}\wedge1\right)\int_0^{t/2}
\left(\frac{\delta_D(y)^{\alpha/2}}{\sqrt{s}}\wedge 1\right)\,ds\\
\le& c_{11} \left( \frac{\delta_D(x)^{\alpha/2} (
f(x_1)^{\alpha/2}\wedge
t^{1/2})}{t}\wedge1\right)\big((t^{1/2}\delta_D(y)^{\alpha/2})\wedge
t\big).\end{align*} Note that for all $u\in V_1$ and $z\in V_3$,
$$|u-z|\ge |x-y|-|x-u|-|z-y|\ge |x-y|-t^{1/\alpha}/16-|x-y|/2\ge c_{12}
|x-y|.$$ Combining with all the estimates above, we have
$$I_2\le c_{13}\left( \frac{\delta_D(x)^{\alpha/2} ( f(x_1)^{\alpha/2}\wedge t^{1/2})}{t}\wedge1\right)\left(\frac{\delta_D(y)^{\alpha/2}}{\sqrt{t}} \wedge 1\right) \frac{t}{|x-y|^{d+\alpha}}$$ and so
$$p_D(t,x,y)\le  c_{14}\left( \frac{\delta_D(x)^{\alpha/2} ( f(x_1)^{\alpha/2}\wedge t^{1/2})}{t}\wedge1\right)\left(\frac{\delta_D(y)^{\alpha/2}}{\sqrt{t}}
\wedge 1\right) \frac{t}{|x-y|^{d+\alpha}}.$$

  When $\delta_D(x)\ge 2^{-4}
 c_{2.6.1}t^{1/\alpha}$,
we can follow the arguments for the case  $\delta_D(x)\le 2^{-4}c_{2.6.1}t^{1/\alpha}$
above with $V_1$ replaced by $V_1=B(x,
2^{-4}c_{2.6.1}t^{1/\alpha})$
(by noticing that
$\Ee^x[\tau_{V_1}]\le c_{15}t$)
 to prove that
\begin{align*}
p_D(t,x,y)&\le   c_{16}\left(\frac{\delta_D(y)^{\alpha/2}}{\sqrt{t}}
\wedge 1\right) \frac{t}{|x-y|^{d+\alpha}}\\
&\le c_{17}\left( \frac{\delta_D(x)^{\alpha/2} (
f(x_1)^{\alpha/2}\wedge
t^{1/2})}{t}\wedge1\right)\left(\frac{\delta_D(y)^{\alpha/2}}{\sqrt{t}}
\wedge 1\right) \frac{t}{|x-y|^{d+\alpha}},
\end{align*}where in the second inequality we used the fact that
$t^{1/\alpha}
\le c_{18}f(x_1)$, thanks to the property that
$\delta_D(x)\le c_{19} f(x_1)$ for all $x\in D$. The proof is complete.
\end{proof}

Notice that, if $t \le
C_0f(y_1)^{\alpha}$ and $|x-y|\le t^{1/\alpha}$,  then $t\le c_1 C_0f(x_1)^\alpha$ for some constant $c_1\ge1$. Therefore, putting all the previous lemmas in this section together yields the
following statement.
\begin{proposition}\label{p2-1}
There is a constant $C_0\in (0, 1]$
such that for all $t>0$ and $x,y\in D$ such that  for all $0<t\le
C_0f(y_1)^{\alpha}$,
\begin{align*}
p_D(t,x,y)\simeq &\,
p(t,x,y)\left(\frac{\delta_D(x)^{\alpha/2}\left(f(x_1)^{\alpha/2}\wedge
t^{1/2}\right)}{t}\wedge 1\right)
\left(\frac{\delta_D(y)^{\alpha/2}}{\sqrt{t}}\wedge 1\right)
\simeq p(t,x,y)
\Psi(t,x)\Psi(t,y) ,\end{align*} where $\Psi(t,x)$ is defined by \eqref{e:DPsi}.
\end{proposition}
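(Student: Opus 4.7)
The plan is to assemble Proposition \ref{p2-1} directly from Lemmas \ref{l2-2}--\ref{l2-6}, once the constant $C_0$ is chosen small enough. I would fix
$$C_0 := \min\bigl\{c_{3.1.1},\, c_{3.2.1},\, c_{3.3.1},\, c_{3.4.0},\, c_{3.5.1},\, 1\bigr\},$$
possibly after one further shrinkage discussed in Case A below, and recall the standing assumption $x_1\ge y_1$, which by the monotonicity \eqref{e:f2} of $f$ implies $f(x_1)\le f(y_1)$. The argument splits according to the sign of $|x-y|-t^{1/\alpha}$ and, in the off-diagonal regime, according to whether $t$ exceeds $C_0 f(x_1)^{\alpha}$.

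Case A (near-diagonal, $|x-y|\le t^{1/\alpha}$). Since $|x_1-y_1|\le|x-y|\le t^{1/\alpha}\le C_0^{1/\alpha}f(y_1)\le 1$, property \eqref{e:f3} gives $f(x_1)\simeq f(y_1)$, so $t\le c_1 C_0 f(x_1)^{\alpha}$ for some $c_1\ge1$; after shrinking $C_0$ so that $c_1 C_0 \le c_{3.1.1}\wedge c_{3.2.1}$, both Lemma \ref{l2-2} and Lemma \ref{l2-3} yield
$$p_D(t,x,y)\simeq t^{-d/\alpha}\Bigl(\tfrac{\delta_D(x)^{\alpha/2}}{\sqrt{t}}\wedge1\Bigr)\Bigl(\tfrac{\delta_D(y)^{\alpha/2}}{\sqrt{t}}\wedge1\Bigr).$$
Using $t\le 1$ (since $f(y_1)\le 1/4$) and $t^{1/2}\le C_0^{1/2}f(x_1)^{\alpha/2}\wedge C_0^{1/2}f(y_1)^{\alpha/2}$, one checks directly from \eqref{e:DPsi} that $\Psi(t,x)\simeq \frac{\delta_D(x)^{\alpha/2}}{\sqrt{t}}\wedge1$ and $\Psi(t,y)\simeq \frac{\delta_D(y)^{\alpha/2}}{\sqrt{t}}\wedge1$; together with $p(t,x,y)\simeq t^{-d/\alpha}$ in this regime, this delivers the claim.

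Case B (off-diagonal, small time: $|x-y|\ge t^{1/\alpha}$ and $t\le C_0 f(x_1)^{\alpha}$). Here Lemma \ref{l2-4} gives the lower bound and Lemma \ref{l2-6} the upper bound, both of the form
$$p_D(t,x,y)\simeq \tfrac{t}{|x-y|^{d+\alpha}}\Bigl(\tfrac{\delta_D(x)^{\alpha/2}}{\sqrt{t}}\wedge1\Bigr)\Bigl(\tfrac{\delta_D(y)^{\alpha/2}}{\sqrt{t}}\wedge1\Bigr),$$
since $t\le C_0 f(x_1)^{\alpha}\le C_0 f(y_1)^{\alpha}$ forces $f(x_1)^{\alpha/2}\wedge t^{1/2}=f(y_1)^{\alpha/2}\wedge t^{1/2}=t^{1/2}$ in the upper bound of Lemma \ref{l2-6}. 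As in Case A, $\Psi(t,x)$ and $\Psi(t,y)$ reduce to $\frac{\delta_D(\cdot)^{\alpha/2}}{\sqrt{t}}\wedge1$, and $p(t,x,y)\simeq t/|x-y|^{d+\alpha}$, so the estimate matches.

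Case C (off-diagonal, intermediate time: $|x-y|\ge t^{1/\alpha}$ and $C_0 f(x_1)^{\alpha}\le t\le C_0 f(y_1)^{\alpha}$). The lower bound comes from Lemma \ref{l2-5} (with $c_{3.4.1}=c_{3.4.2}=C_0$), the upper bound from Lemma \ref{l2-6}. In this regime $f(x_1)^{\alpha/2}\wedge t^{1/2}\simeq f(x_1)^{\alpha/2}$ and $f(y_1)^{\alpha/2}\wedge t^{1/2}=t^{1/2}$, so \eqref{e:DPsi} together with $t\le 1$ yields $\Psi(t,x)\simeq \frac{\delta_D(x)^{\alpha/2}f(x_1)^{\alpha/2}}{t}\wedge 1$ and $\Psi(t,y)\simeq \frac{\delta_D(y)^{\alpha/2}}{\sqrt{t}}\wedge 1$, again matching the lemmas after multiplying by $p(t,x,y)\simeq t/|x-y|^{d+\alpha}$.

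No genuine new obstacle arises at this stage: the entire proof of Proposition \ref{p2-1} is an \emph{assembly} statement, and the only place that needs any care is the transition from ``$t\le C_0 f(y_1)^\alpha$'' to ``$t\lesssim f(x_1)^\alpha$'' in Case A, which is exactly where \eqref{e:f3} and the near-diagonal constraint $|x-y|\le t^{1/\alpha}$ combine to give $f(x_1)\simeq f(y_1)$. All the analytic difficulty has already been absorbed into Lemmas \ref{l2-2}--\ref{l2-6}; the proposition itself amounts to verifying that their three case-expressions collapse into the single unified form $p(t,x,y)\Psi(t,x)\Psi(t,y)$.
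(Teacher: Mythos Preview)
Your proposal is correct and takes essentially the same approach as the paper: the proposition is indeed just an assembly of Lemmas \ref{l2-2}--\ref{l2-6}, and the paper's own ``proof'' is the single sentence preceding the statement, noting that $t\le C_0 f(y_1)^\alpha$ together with $|x-y|\le t^{1/\alpha}$ forces $t\le c_1 C_0 f(x_1)^\alpha$. Your case split (near-diagonal; off-diagonal with $t\le C_0 f(x_1)^\alpha$; off-diagonal with $C_0 f(x_1)^\alpha\le t\le C_0 f(y_1)^\alpha$) and the corresponding reductions of $\Psi(t,x),\Psi(t,y)$ are exactly what is implicit in the paper's one-line remark.
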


\smallskip

The next two sections are devoted to estimates of $p_D(t,x,y)$
for the case that $t\ge C_0f(y_1)^\alpha$, where $C_0$ is
the fixed constant given
 in Proposition \ref{p2-1}. For this, we define for any $y\in D$,
\begin{equation}\label{e:lllsss}t_0(y):=\inf\Big\{t>0: e^{-C_*f(y_1)^{-\alpha}t}\le t(1+|y|)^{-(d+\alpha-1)}\Big\},\end{equation}
where $C_*=c_{2.9.2}>0$ is given in \eqref{l3-1-0}.
\begin{remark}\label{remark4.1} As mentioned in the remark below \eqref{e:t0}, $t_0(y)\in (0,\infty)$ is unique and satisfies that
$$e^{-C_*f(y_1)^{-\alpha}t_0(y)}= t_0(y)(1+|y|)^{-(d+\alpha-1)}.$$ In particular, we can check that there is a constant $C_{3.10}>0$ such that for all $y\in D$,
\begin{equation}\label{e:lllsss11}f(y_1)^{\alpha}\le C_{3.10} t_0(y).\end{equation}
Usually it is not easy to obtain the explicit value of $t_0(y)$;
however, we are able to get explicit estimates of $t_0(y)$
under some mild assumption on $f$. For example, if $f(r)\ge
c(1+r)^{- p}$ for some constants $c$ and $p>0$, then
$t_0(y)\simeq f(y_1)^\alpha
\log(2+|y|)$ for all $y\in D$.
\end{remark}

\section{Case II: $C_0f(y_1)^\alpha\le t\le C_1t_0(y)$ for any given constant $C_1>0$.}

Throughout this section, we always let $C_0$ be the constant in Proposition \ref{p2-1}, and $t_0(y)$ be defined by \eqref{e:lllsss} for any $y\in D$.

\begin{lemma}\label{l3-3} {\bf (Lower bound)}\,\,
There exist  constants
$c_{\ac{1}}\in (0,1)$ and $c_{\ac{2}}>0$ such that for all
 $x,y\in D$ and $C_0f(y_1)^{\alpha}\le t$,
\begin{align*}
p_D(t,x,y)\ge& c_{\ac{1}}\left(t^{-d/\alpha}\wedge \frac{t}{|x-y|^{d+\alpha}}\right)
\left(\frac{\delta_D(x)^{\alpha/2}f(x_1)^{\alpha/2}}{t\wedge1}\right)
\left(\frac{\delta_D(y)^{\alpha/2}f(y_1)^{\alpha}}{t\wedge1}\right)e^{-c_{\ac{2}}tf(y_1)^{-\alpha}}.
\end{align*}
\end{lemma}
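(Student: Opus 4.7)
The plan is a three-leg Chapman--Kolmogorov chain argument. The two short outer legs, of lengths $s_x := c_0 f(x_1)^\alpha$ and $s_y := c_0 f(y_1)^\alpha$ for a sufficiently small constant $c_0 \in (0,1)$, push $x$ and $y$ into deep interior points of the horn. By Lemma~\ref{l5-1} one obtains points $\xi_x^{s_x}$, $\xi_y^{s_y}$ and balls $B_x := B(\xi_x^{s_x}, s_x^{1/\alpha}) \subset D$, $B_y := B(\xi_y^{s_y}, s_y^{1/\alpha}) \subset D$, with $\delta_D(u) \simeq f(x_1)$ for $u \in B_x$ and $\delta_D(v) \simeq f(y_1)$ for $v \in B_y$, and the mass bounds
\[
\int_{B_x} p_D(s_x, x, u)\,du \ge c\, \frac{\delta_D(x)^{\alpha/2}}{f(x_1)^{\alpha/2}}, \qquad \int_{B_y} p_D(s_y, v, y)\,dv \ge c\, \frac{\delta_D(y)^{\alpha/2}}{f(y_1)^{\alpha/2}}.
\]
Setting $t_m := t - s_x - s_y \simeq t$ (by the hypothesis $t \ge C_0 f(y_1)^\alpha$ and $c_0 \ll C_0$) and applying Chapman--Kolmogorov,
\[
p_D(t, x, y) \ge \int_{B_x} \int_{B_y} p_D(s_x, x, u)\, p_D(t_m, u, v)\, p_D(s_y, v, y)\, du\, dv,
\]
the proof reduces to a uniform lower bound on the middle-leg kernel $p_D(t_m, u, v)$ for $u \in B_x$, $v \in B_y$.

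The long middle leg is handled via a L\'evy-system one-jump bound combined with survival in an interior ball near $y$. Let $r_y := c_* f(y_1)$ and $W := B(\xi_y^*, r_y) \subset D$ (by the non-uniform interior ball condition); I shrink $c_0$ if needed so that $B_y \subset W$ with $\delta_W(v) \simeq f(y_1)$ for all $v \in B_y$. For $u \in B_x$, set $V_u := B(u, s_x^{1/\alpha}) \subset D$. Applying the L\'evy system \eqref{e2-6} at the exit time from $V_u$ gives
\[
p_D(t_m, u, v) \ge c \int_0^{s_x/2} \int_{V_u} p_{V_u}(\sigma, u, u') \int_W \frac{p_D(t_m - \sigma, z, v)}{|u'-z|^{d+\alpha}}\, dz\, du'\, d\sigma.
\]
For $\sigma \le s_x/2$, $\Pp^u(\tau_{V_u} > \sigma) \ge c$ by \eqref{l5-1-2}; since $u' \in V_u$ lies near $x$ and $z \in W$ near $y$, the distance $|u'-z|$ is comparable to $|x-y| \vee t^{1/\alpha}$, so the L\'evy-kernel factor $|u'-z|^{-(d+\alpha)}$ contributes $\simeq p(t,x,y)/t$ after integration. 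Bounding $p_D \ge p_W$ on the innermost integral and invoking the classical two-sided Dirichlet heat kernel estimate on the $C^{1,1}$ ball $W$ (whose principal eigenvalue satisfies $\lambda_W \simeq r_y^{-\alpha} \simeq f(y_1)^{-\alpha}$, and for which $z, v$ are deep interior points with $\delta_W \simeq r_y$) supplies $\int_W p_W(t_m-\sigma, z, v)\,dz \ge c\, e^{-c' t/f(y_1)^\alpha}$. Collecting these factors yields
\[
p_D(t_m, u, v) \ge c\, f(x_1)^\alpha \cdot \frac{p(t,x,y)}{t} \cdot e^{-c' t/f(y_1)^\alpha}.
\]

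Multiplying the middle-leg bound by the two outer-leg contributions recovers the right-hand side of Lemma~\ref{l3-3} up to a polynomial in $f(y_1) \le 1$ and a missing power of $t$ in the $(t \wedge 1)^{-2}$ normalisation. Since $f(y_1)^{-p} \ge 1$ for $p > 0$ and $f(y_1) \le 1$, choosing the statement's constant $c_{\ac{2}}$ strictly larger than the natural exponent $c'$ absorbs the polynomial discrepancy: writing $A := t/f(y_1)^\alpha \ge C_0$, the elementary bound $\log A \le (c_{\ac{2}} - c') A$ for all $A \ge C_0$ (valid for $c_{\ac{2}} - c'$ large enough depending on $C_0$) shows that the ratio (my bound)/(target) $= t^{-1} f(y_1)^{-3\alpha/2} \exp\{(c_{\ac{2}} - c') t/f(y_1)^\alpha\} \ge 1$ throughout the range $t \ge C_0 f(y_1)^\alpha$. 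The main obstacle is exactly the middle-leg step: unlike in the uniformly $C^{1,1}$ setting of \cite{CKS1}, the boundary Harnack principle is unavailable on horn-shaped regions, which forces the direct L\'evy-system computation and explains why Lemma~\ref{l3-3} records only a preliminary non-sharp lower bound (carrying an extra factor $f(y_1)^{\alpha/2}$ compared with the target \eqref{e:hk2}), to be refined later in the paper.
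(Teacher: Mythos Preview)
Your three-leg Chapman--Kolmogorov scheme with Lemma~\ref{l5-1} on the outer legs, a L\'evy-system jump for the middle leg, and the exponential absorption at the end is exactly the structure of the paper's proof. The one genuine difference is that you bound the post-jump survival by $p_W$ on a fixed interior ball (using \eqref{hk-s1} and scaling, as in \eqref{l3-3-5}), whereas the paper runs an $n$-step chain iteration through $V_y$ to get \eqref{e:kkll7}; your route is a legitimate and slightly more direct alternative.

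There is, however, a real gap. The Ikeda--Watanabe identity you invoke for the jump from $V_u$ into $W$ is only valid when $V_u\cap W=\emptyset$: landing in $V_u\cap W$ is not an exit from $V_u$, so the displayed formula with the full $\int_W$ is not justified there. Disjointness holds whenever $|x-y|\ge c\,f(y_1)$ for an appropriate constant, which covers the off-diagonal regime $|x-y|\ge t^{1/\alpha}\wedge 1$ (since $t\ge C_0 f(y_1)^\alpha$ and $f\le 1/4$ force $|x-y|\gtrsim f(y_1)$). But in the near-diagonal regime one can have $|x-y|\ll f(y_1)$, and then $V_u$ and $W$ overlap; your assertion that $|u'-z|$ is \emph{comparable} to $|x-y|\vee t^{1/\alpha}$ fails on the lower side, and the jump argument as written breaks down. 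The paper deals with this by splitting: Case~1 ($|x-y|\le t^{1/\alpha}\wedge 1$) is handled by a pure chain argument through $V_y$, and only in Case~2 ($|x-y|\ge t^{1/\alpha}\wedge 1$) is the L\'evy-system jump performed, after explicitly checking disjointness of the balls. You need an analogous split: when $|x-y|\lesssim f(y_1)$ (so $f(x_1)\simeq f(y_1)$), both $u$ and $v$ sit at depth $\gtrsim c_0^{1/\alpha}f(y_1)$ within distance $\lesssim f(y_1)$ of each other, and either a short chain via Lemma~\ref{L:2.2} or a direct $p_D\ge p_{W'}$ bound on a ball $W'\subset D$ of radius $\simeq f(y_1)$ containing both yields $p_D(t_m,u,v)\ge c\,f(y_1)^{-d}e^{-c't/f(y_1)^\alpha}$, which dominates your middle-leg target. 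This patch is easy, but it must be said.
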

\begin{proof}
(i)
Case 1: $|x-y|\le t^{1/\alpha}\wedge1$. According to Lemma \ref{l5-1},
 we can find
$\xi_x,\xi_y\in D$ and a constant $\lambda:=\lambda(C_0)\in (0,1)$ small enough
such that $V_x:=B(\xi_x, \lambda f(x_1)) \subset B(\xi_x, 4\lambda f(x_1))\subset D$, $V_y:=B(\xi_y, \lambda
f(y_1)) \subset B(\xi_y, 4\lambda
f(y_1))\subset D$, and
\begin{equation}\label{l3-3-2}
\begin{split}
&\int_{V_x}p_D\left(2^{-2}C_0f(x_1)^\alpha, x,z\right)\,dz\ge c_1
\delta_D(x)^{\alpha/2} f(x_1)^{-\alpha/2},\\
&\int_{V_y}p_D\left(2^{-2}C_0f(y_1)^\alpha, y,z\right)\,dz\ge
c_1\delta_D(y)^{\alpha/2} f(y_1)^{-\alpha/2}.
\end{split}
\end{equation}
 Here, we  used the fact that
$\delta_D(x)\le c_2 f(x_1)$ for all $x\in D$ with some constant
$c_2>0$.

On the other hand, for any $z\in V_x$, $w\in V_y$ and
$t\ge 2^{-1}{C_0f(y_1)^{\alpha}}$, taking
$n:=n(t,y)=[2t/(C_0f(y_1)^\alpha)]+1$ and
$\bar c:=\bar c(t,y)=tf(y_1)^{-\alpha}n^{-1}$,
we have
\begin{align*}
 p_D(t,z,w)
&=\int_{D}\cdots\int_Dp_D(t/n,z,z_1)\cdots p_D(t/n,z_{n-1},w)\,dz_1\cdots dz_{n-1}\\
&\ge \int_{V_y}\cdots
\int_{V_y}
p_D(\bar c f(y_1)^\alpha, z,z_1)\cdots
p_D(\bar c f(y_1)^\alpha, z_{n-1},w)\,dz_1\cdots dz_{n-1},
\end{align*}
where in the inequality above we used the facts that $V_y\subset D$.

The assumption $|x-y|\le t^{1/\alpha}\wedge1$ implies that $f(x_1)\simeq f(y_1)$.
Using this and $C_0f(y_1)^\alpha\le t\wedge1$, we have that,
for all $z\in V_x$ and $u\in V_y$, $|z-u|\le
c_3(t^{1/\alpha}\wedge1)$, $\delta_D(u)\ge \lambda f(y_1)$ and
$\delta_D(z)\ge \lambda f(x_1)\ge c_4\lambda f(y_1)$.
Hence, according to Lemma \ref{L:2.2}, we
obtain that for $z\in V_x$ and $u\in V_y$,
$$p_D(\bar c
f(y_1)^\alpha, z,u)
\ge c_5\left(\frac{f(y_1)^\alpha}{|z-u|^{d+\alpha}}\wedge (f(y_1)^\alpha)^{-d/\alpha}\right)
\ge c_6t^{-(d+\alpha)/\alpha}f(y_1)^{\alpha},$$
where the last inequality is due to the fact that $|z-u|\le c_3(t^{1/\alpha}\wedge 1)$ and $t\ge C_0f(y_1)^\alpha$. We mention that, since $\bar c\in [C_0/4,C_0/2]$ (i.e.,
$\bar c$ may depend on $y$ and $t$ but it is uniformly bounded between $C_0/4$ and $C_0/2$), $c_5>0$ here is independent of $y$ and $t$ due to
the argument in \cite[Proposition 3.3]{CKS1}.
Similarly, we have
$p_D(\bar c
f(y_1)^\alpha, w,u)\ge c_7f(y_1)^{-d}$ for $w,u\in V_y.$
Hence, putting all the  estimates above together yields that for all
$z\in V_x$, $w\in V_y$ and $t\ge 2^{-1}{C_0f(y_1)^{\alpha}}$,
\begin{equation}\label{e:kkll7}\begin{split}
p_D(t,z,w)&\ge \Big(c_7f(y_1)^{-d} |B(\xi_y, \lambda f(y_1))|\Big)^{n-1} c_6t^{-(d+\alpha)/\alpha}f(y_1)^{\alpha}\ge
c_8f(y_1)^{-d}e^{-c_9tf(y_1)^{-\alpha}},
\end{split}\end{equation}
where the last inequality follows from the facts that
$n=[2t/(C_0f(y_1)^\alpha)]+1$ and
$\left(tf(y_1)^{-\alpha}\right)^{-(d+\alpha)/\alpha}$ $\ge c_{10}e^{
-c_{11}t
f(y_1)^{-\alpha}}$ for each
$t\ge 2^{-1}{C_0f(y_1)^{\alpha}}$. Therefore, for all $t\ge
{C_0f(y_1)^{\alpha}}$,
\begin{align*}p_D (t,x,y)
=&\int_D\int_D p_D\left(2^{-2}C_0f(x_1)^\alpha,x,z\right)  p_D\left(t-2^{-2}C_0f(x_1)^\alpha-
2^{-2}C_0f(y_1)^\alpha
,z,w\right)\\
&\qquad\quad\times p_D\left(2^{-2}C_0f(y_1)^\alpha
,w,y\right)\,dz\,dw\\
\ge& \left[\int_{V_x} p_D\left(2^{-2}C_0f(x_1)^\alpha, x,z\right)\,dz \right]
\left[\int_{V_y}p_D\left(2^{-2}C_0f(y_1)^\alpha
,w,y\right)\,dw\right] \\
&\qquad\quad\times \inf_{z\in V_x,w\in V_y} p_D\left(t-2^{-2}C_0f(x_1)^\alpha-
2^{-2}C_0f(y_1)^\alpha
,z,w\right)\\
\ge & c_{12}(\delta_D(x)^{\alpha/2} f(x_1)^{-\alpha/2}) (\delta_D(y)^{\alpha/2} f(y_1)^{-\alpha/2})\\
& \qquad\quad\times \exp\left(-c_9\left(t-
2^{-2}C_0f(y_1)^\alpha
-2^{-2}C_0f(x_1)^\alpha\right)f(y_1)^{-\alpha}\right) f(y_1)^{-d}\\
\ge& c_{12}(\delta_D(x)^{\alpha/2} f(x_1)^{-\alpha/2})(\delta_D(y)^{\alpha/2} f(y_1)^{-\alpha/2})\exp\left(-
c_{9}tf(y_1)^{-\alpha}\right) f(y_1)^{-d}\\
=& c_{12}\left( \frac{\delta_D(x)^{\alpha/2} f(x_1)^{\alpha/2}}{t} \right)\left(\frac{\delta_D(y)^{\alpha/2} f(y_1)^{\alpha/2}}{t}
\right)  \exp\left(-c_{9}tf(y_1)^{-\alpha}\right)\left(tf(y_1)^{-\alpha}\right)^{2+d/\alpha}t^{-d/\alpha}\\
\ge & c_{13}\left( \frac{\delta_D(x)^{\alpha/2}
f(x_1)^{\alpha/2}}{t\wedge1} \right)\!\!\!
\left(\frac{\delta_D(y)^{\alpha/2} f(y_1)^{\alpha/2}}{t\wedge1}
\right) e^{-
c_{14}tf(y_1)^{-\alpha}}
t^{-d/\alpha},  \end{align*}
where the second inequality follows from \eqref{l3-3-2},
\eqref{e:kkll7} and the fact that
$$t-2^{-2}C_0f(x_1)^\alpha-
2^{-2}C_0f(y_1)^\alpha
\ge
{t}/{2}\quad \hbox{ for } t\ge C_0f(y_1)^\alpha,$$ (thanks to
$t\ge
C_0f(y_1)^\alpha\ge C_0f(x_1)^\alpha$),
 and  the last inequality is due to $$\left(\frac{t\wedge 1}{t}\right)^2\left(tf(y_1)^{-\alpha}\right)^{2+d/\alpha} \ge c_{15}e^{-c_{16}tf(y_1)^{-\alpha}}\quad \hbox{ for } t\ge C_0f(y_1)^\alpha.$$

(ii) Case 2: $|x-y|\ge
t^{1/\alpha}\wedge1 $. Let $V_x=B(\xi_x,\lambda f(x_1))$ and
$V_y=B(\xi_y, \lambda f(y_1))$ be those defined
in part (i).
By  Lemma \ref{l5-1}, we have $|x-\xi_x|\le c_{17}\lambda
f(x_1)$ and $|y-\xi_y|\le c_{17}\lambda f(y_1)$. Choosing
$\lambda>0$ small enough if necessary, we find that for every $z\in
B\big(\xi_x,2\lambda f(x_1)\big)$ and $w\in B\big(\xi_y, 2\lambda
f(y_1)\big)$,
\begin{align*}
|z-w|& \ge |x-y|-|x-\xi_x|-|z-\xi_x|-|y-\xi_y|-|w-\xi_y| \ge |x-y|-c_{18}\lambda f(y_1)\ge c_{19}|x-y|
\end{align*}
and, similarly,
\begin{equation}\label{l3-3-4}
|z-w| \le |x-y|+c_{18}\lambda f(y_1)\le c_{20}|x-y|,
\end{equation}
where we have used the fact that $|x-y|\ge
(t^{1/\alpha}\wedge1)\ge C_0^{1/\alpha} f(y_1)$ (because $C_0, f \in (0,1]$).   In particular, $B\big(z,\lambda
f(x_1)\big) \cap B\big(w,\lambda f(y_1)\big)=\emptyset$ for
every $z\in V_x$ and $w\in V_y$. Therefore, for any $z\in V_x$,
$w\in V_y$ and $t>C_0f(y_1)^\alpha/2$,
\begin{align*}&p_D(t,z,w)=\Ee^z[p_D(t/2,X_{t/2}^D,w)]\\
&\ge \Ee^z\Big[p_D(t/2, X_{t/2},w) :0<\tau_{B(z,\lambda f(x_1))}<2^{-2} {C_0f(x_1)^\alpha}, X_{\tau_{B(z, \lambda f(x_1))}}\in B(w, \lambda  f(y_1)/2),\\
&\qquad \quad
 X_s\in B(w,\lambda f(y_1))\text{ for all }s\in [\tau_{B(z,\lambda f(x_1))}, \tau_{B(z,\lambda f(x_1))}+t]\Big]\\
&\ge c_{21} e^{-c_{22}t f(y_1)^{-\alpha}} f(y_1)^{-d}\inf_{u\in B(w,\lambda f(y_1)/2)} \Pp^u (\tau_{B(u,\lambda f(y_1)/2)}>t)\\
&\qquad\quad\times\left(\int_0^{2^{-2}{C_0f(x_1)^\alpha}}\!\!\!\!\int_{B(z, \lambda f(x_1))}\!\!
p_{B(z,\lambda f(x_1))}(s,z,u)\!\!\int_{B(w,\lambda f(y_1)/2)}\!\!\frac{1}{|u-v|^{d+\alpha}}\,dv\,du\,ds\right)\\
&\ge c_{23} e^{-c_{22}t f(y_1)^{-\alpha}} f(y_1)^{-d}e^{-c_{24}t f(y_1)^{-\alpha}}\\
&\qquad\quad \times f(x_1)^\alpha\Pp^z (\tau_{B(z,\lambda f(x_1)/2)}>2^{-2}C_0f(x_1)^\alpha ) \big|B(w,\lambda f(y_1))\big| \frac{1}{|x-y|^{d+\alpha}}\\
&\ge c_{25} e^{-c_{26}t f(y_1)^{-\alpha}}\frac{1}{|x-y|^{d+\alpha}} f(x_1)^\alpha.
\end{align*}
Here
the first inequality is due to L\'evy system \eqref{e2-6},
the second inequality follows from
$$
\inf_{u,w\in B(\xi_y,2\lambda f(y_1))}p_D(t/2,u,w)\ge
c_{21}f(y_1)^{-d} e^{-c_{22}tf(y_1)^{-\alpha}},
$$
which is a direct consequence of the argument for \eqref{e:kkll7} (by choosing $\lambda$ small enough if necessary),
in the third inequality we have used \eqref{l3-3-4} and the estimate
as follows
\begin{equation}\label{l3-3-5}\begin{split}
\Pp^u\big(\tau_{B(u,r)}>t\big)&=
\Pp^u\big(\tau_{B(u,1)}>t/r^\alpha\big) =\int_{B(u,1)}p_{B(u,1)}\big(t/r^\alpha,u,z\big)dz\ge
c_{26}e^{-c_{27}tr^{-\alpha}},\quad t\ge r^\alpha,
\end{split}\end{equation}
which is deduced from the scaling property of symmetric
$\alpha$-stable processes and \eqref{hk-s1}, and the fourth inequality is due to \eqref{l5-1-2}.
Therefore, combining this with \eqref{l3-3-2}, we arrive at that for
all $t\ge {C_0f(y_1)^{\alpha}}$,
\begin{align*}
 p_D(t,x,y)
&\ge\int_{V_x}\int_{V_y} p_D\left(2^{-2}C_0f(x_1)^\alpha,x,z\right)
p_D\left(t-2^{-2}C_0f(x_1)^\alpha-
2^{-2}C_0f(y_1)^\alpha
, z,w\right)\\
&\qquad\quad \times p_D\left(2^{-2}C_0f(x_1)^\alpha,w,y\right)\,dz\,dw\\
&\ge \left(\int_{V_x} p_D\left(2^{-2}C_0f(x_1)^\alpha,x,z\right)\,dz \right)\left(\int_{V_y} p_D\left(2^{-2}C_0f(x_1)^\alpha,w,y\right)\,dw\right) \\
&\qquad\quad\times\inf_{z\in V_x,w\in V_y}
p_D\left(t-2^{-2}C_0f(x_1)^\alpha-
2^{-2}C_0f(y_1)^\alpha
, z,w\right)\\
&\ge c_{28}\left(\delta_D(x)^{\alpha/2} f(x_1)^{-\alpha/2}\right)
\left(  \delta_D(y)^{\alpha/2} f(y_1)^{-\alpha/2}\right)\left(  e^{-c_{29}t f(y_1)^{-\alpha}}\frac{1}{|x-y|^{d+\alpha}} f(x_1)^\alpha\right)\\
&= c_{28}\frac{\delta_D(x)^{\alpha/2} f(x_1)^{\alpha/2}}{t}  \frac{\delta_D(y)^{\alpha/2} f(y_1)^{\alpha/2}}{t}
\frac{t}{|x-y|^{d+\alpha}}e^{-c_{29}tf(y_1)^{-\alpha}}\left(tf(y_1)^{-\alpha}\right)\\
&\ge c_{30} \left( \frac{\delta_D(x)^{\alpha/2}
f(x_1)^{\alpha/2}}{t\wedge1}\right)\left(\frac{\delta_D(y)^{\alpha/2}
f(y_1)^{\alpha/2}}{t\wedge1}\right)
e^{-2c_{29}
tf(y_1)^{-\alpha}}
\frac{t}{|x-y|^{d+\alpha}},
\end{align*}
where the last inequality follows from
the inequality $({t\wedge 1}/{t})^2\left(tf(y_1)^{-\alpha}\right)\ge c_{31}e^{-c_{29}tf(y_1)^{-\alpha}}$ for $ t\ge C_0f(y_1)^\alpha.$
We complete the proof.
\end{proof}

\begin{lemma}\label{l3-2} {\bf (Upper bound)}\,\,
For any $c_0>0$,
there exists  a constant
$c_{\ac{1}}:=c_{\ac{1}}(c_0)>0$ such  that for all $x,y\in D$
and
$C_0f(y_1)^{\alpha}\le t \le
c_0t_0(y)$,
  \begin{align*}
p_D(t,x,y)\le& c_{\ac{1}}\left(t^{-d/\alpha}\wedge \frac{t}{|x-y|^{d+\alpha}}\right)
\left(\frac{\delta_D(x)^{\alpha/2}f(x_1)^{\alpha/2}}{t\wedge1}\right)
\left(\frac{\delta_D(y)^{\alpha/2}f(y_1)^{\alpha}}{t\wedge1}\right)
e^{-\frac{C_*tf(y_1)^{-\alpha}}
{2c_0 \vee 4}
},
\end{align*} where $C_*=c_{2.9.2}$ is given in \eqref{l3-1-0}.
\end{lemma}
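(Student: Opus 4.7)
The plan is to combine a Chapman--Kolmogorov splitting with the survival probability bound of Lemma \ref{e:lm2}, and in the off-diagonal regime with the decay estimate of Lemma \ref{l:new}. The central new ingredient is the conversion of the polynomial tail $(1+|y|)^{-(d+\alpha-1)}$ in \eqref{l3-1-0} into the exponential factor $e^{-c\, t\, f(y_1)^{-\alpha}}$, which will follow from the defining relation \eqref{e:lllsss} together with the hypothesis $t \le c_0\, t_0(y)$.

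For the near-diagonal regime $|x-y| \le t^{1/\alpha}$, I would use a three-fold Chapman--Kolmogorov decomposition and the trivial upper bound $p_D(s,\cdot,\cdot) \le p(s,\cdot,\cdot) \le c s^{-d/\alpha}$ on the middle factor to obtain
\begin{equation*}
p_D(t,x,y) \le c\, t^{-d/\alpha}\, \Pp^x(\tau_D > t/3)\, \Pp^y(\tau_D > t/3).
\end{equation*}
For the off-diagonal regime $|x-y| \ge t^{1/\alpha}$, I would split $p_D(t,x,y) = \int_D p_D(t/2,x,z) p_D(t/2,z,y)\, dz$ across the dichotomy $\{|z-x| \le |x-y|/2\}$ versus $\{|z-x| > |x-y|/2\}$. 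In the first region $|z-y| \ge |x-y|/2$, so Lemma \ref{l:new} applied to $p_D(t/2,z,y)$ (using symmetry $p_D(t,x,y)=p_D(t,y,x)$) yields a factor $c\, t/|x-y|^{d+\alpha}\, \Psi(t/2,y)$; in the second region the symmetric bound gives $c\, t/|x-y|^{d+\alpha}\, \Psi(t/2,x)$. Integrating the remaining $p_D$-factor over $D$ produces a survival probability, so
\begin{equation*}
p_D(t,x,y) \le \frac{c\, t}{|x-y|^{d+\alpha}} \bigl[\Psi(t/2,y)\, \Pp^x(\tau_D > t/2) + \Psi(t/2,x)\, \Pp^y(\tau_D > t/2)\bigr].
\end{equation*}

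In both regimes, I would then invoke Lemma \ref{e:lm2} to bound each survival probability. For the $y$-factor, setting $s \in \{t/2, t/3\}$, the condition $t \le c_0 t_0(y)$ combined with the defining identity $e^{-C_* t_0(y) f(y_1)^{-\alpha}} = t_0(y)(1+|y|)^{-(d+\alpha-1)}$ gives
\begin{equation*}
s\, (1+|y|)^{-(d+\alpha-1)} = \frac{s}{t_0(y)}\, e^{-C_* t_0(y)\, f(y_1)^{-\alpha}} \le c_0\, e^{-C_*\, t\, f(y_1)^{-\alpha}/(2c_0)},
\end{equation*}
since $s/t_0(y) \le c_0$ and $t_0(y) \ge s/c_0 \ge t/(2c_0)$. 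Combined with the direct exponential $e^{-C_* s f(y_1)^{-\alpha}}$ (and, when $c_0 < 2$, the second branch $e^{-C_* s}$ of the minimum in \eqref{l3-1-0}), this yields $\Pp^y(\tau_D > s) \le c\, \Psi(s,y)\, e^{-C_* t f(y_1)^{-\alpha}/(2c_0 \vee 4)}$. For the $x$-survival probability I use $f(x_1) \le f(y_1)$ (from $x_1 \ge y_1$ and the monotonicity of $f$) together with $(1+|x|) \ge c(1+|y|)$ on $D$ (valid because $|z| \simeq z_1$ at infinity in the horn and because $D \cap \{z_1 < 2\}$ is bounded) to absorb the $x$-dependent tails into the $y$-dependent ones, giving $\Pp^x(\tau_D > s) \le c\, \Psi(s,x)\, e^{-C_* t f(y_1)^{-\alpha}/(2c_0 \vee 4)}$ as well.

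Finally, to pass from $\Psi(s,\cdot)$ to the explicit form $\delta_D(\cdot)^{\alpha/2}\, f(\cdot_1)^{\alpha/2}/(t \wedge 1)$, I use $t \ge C_0 f(y_1)^\alpha \ge C_0 f(x_1)^\alpha$, which forces $s^{1/2} \gtrsim f(\cdot_1)^{\alpha/2}$, so the inner minimum in the definition of $\Psi$ is attained at $f(\cdot_1)^{\alpha/2}$, and $s \wedge 1 \simeq t \wedge 1$; the outer minimum with $1$ is absorbed into constants using $\delta_D(\cdot) \le c\, f(\cdot_1) \le c$. The main obstacle is the bookkeeping of constants producing the specific denominator $2c_0 \vee 4$ in the exponential: this requires carefully combining the two summands in \eqref{l3-1-0} and distinguishing between the case $c_0 \ge 2$, where the bound $s/t_0(y) \le c_0$ drives the exponent, and the case $c_0 < 2$, where one must switch to the second branch $e^{-C_* s}$ of the minimum in Lemma \ref{e:lm2} to obtain a uniform exponent of $1/4$.
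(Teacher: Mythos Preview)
Your approach matches the paper's proof closely: the same Chapman--Kolmogorov splitting (three-fold in the near-diagonal regime, two-fold with the dichotomy $|z-x|\lessgtr |x-y|/2$ combined with Lemma~\ref{l:new} in the off-diagonal regime), followed by the same conversion of the polynomial tail in \eqref{l3-1-0} into an exponential via the defining identity for $t_0(y)$, and the same use of $f(x_1)\le f(y_1)$ and $1+|x|\ge c(1+|y|)$ to transfer the $x$-bound to $y$.

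One small correction on the bookkeeping you flag at the end: for $c_0<2$ the branch $e^{-C_*s}$ of \eqref{l3-1-0} does \emph{not} yield the exponent $C_*tf(y_1)^{-\alpha}/4$, because $e^{-C_*t/2}$ is generally much larger than $e^{-C_*tf(y_1)^{-\alpha}/4}$ (recall $f(y_1)^{-\alpha}$ is unbounded as $|y|\to\infty$). The paper handles this simply by noting that $t\le c_0 t_0(y)\le 2t_0(y)$ when $c_0<2$, so one may assume $c_0\ge 2$ without loss of generality, after which $2c_0\vee 4=2c_0$ and your main argument gives the stated exponent directly.
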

\begin{proof}
Without loss of generality we
may assume that $c_0 \ge 2$.

(i) Case 1: $|x-y|\le t^{1/\alpha}$.
Using \eqref{l3-1-0} and considering the cases $C_0f(y_1)^\alpha<t/2 \le t_0(y)$ and $t_0(y) <t/2\le c_0t_0(y)/2$, we know that for any
$y\in D$ and $t>0$ with $C_0f(y_1)^\alpha<t\le c_0t_0(y)$,
\begin{equation}\label{e:op1}
\Pp^y(\tau_D> t/2)\le  c_1\left(\frac{\delta_D(y)^{\alpha/2}
f(y_1)^{\alpha/2}}{t\wedge1}\right) e^{-2^{-1}(C_*/c_0)f(y_1)^{-\alpha}t},
\end{equation}
where we used the fact that for every $t_0(y) <t/2\le c_0t_0(y)/2$,
\begin{equation}\label{l3-2-1}
\begin{split}
\frac{t}{(1+|y|)^{d+\alpha-1}}
&\le \frac{c_0t_0(y)}{(1+|y|)^{d+\alpha-1}} =c_0e^{-C_*t_0(y)f(y_1)^{-\alpha}}\le c_0e^{-(C_*/c_0)f(y_1)^{-\alpha}t}.
\end{split}
\end{equation}
 Let $c_2:=c_2(c_0)=C_*/c_0$.
Then, for any $y,z\in D$ and $t>0$ with $C_0f(y_1)^\alpha<t\le c_0t_0(y)$,
\begin{align*}
p_D(2t/3,z,y)
=&\int_D p_D(t/6,z,u)p_D(t/2,u,y)\,du\le c_3t^{-d/\alpha}\Pp^y(\tau_D>t/2)\\
\le& c_4t^{-d/\alpha}\left(\frac{\delta_D(y)^{\alpha/2}
f(y_1)^{\alpha/2}}{t\wedge1}\right) e^{-c_2f(y_1)^{-\alpha}t/2}.
\end{align*}
Hence, for any $x,y\in D$ and $t>0$ with $C_0f(y_1)^\alpha<t\le c_0t_0(y)$,
\begin{align*}p_D(t,x,y)&= \int_D p_D(t/3,x,z)p_D(2t/3,z,y)\,dz\\
&\le c_4t^{-d/\alpha}\left(\frac{\delta_D(y)^{\alpha/2}
f(y_1)^{\alpha/2}}{t\wedge1}\right)
e^{-c_2f(y_1)^{-\alpha}t/2}\int_D p(t/3,x,z)\,dz\\
&\le c_5t^{-d/\alpha}\left(\frac{\delta_D(y)^{\alpha/2}
f(y_1)^{\alpha/2}}{t\wedge1}
\right)\left(\frac{\delta_D(x)^{\alpha/2}
f(x_1)^{\alpha/2}}{t\wedge1}\right)e^{-c_2f(y_1)^{-\alpha}t/2},
\end{align*}
where in the last inequality we have used \eqref{l3-1-0} and the fact that $C_0f(x_1)^\alpha\le C_0f(y_1)^\alpha\le t$.

(ii) Case 2:
$|x-y|\ge t^{1/\alpha}$.
 Let $V_1=\{z\in D: |z-y|>
|x-y|/2\}$ and $V_2=\{z\in D: |z-y|\le |x-y|/2\}$. Then, it holds that for any $x,y\in D$ and $t>0$, \begin{align*}
p_D(t,x,y)=\int_{V_1}p_D(t/2, x,z)p_D(t/2,z,y)\,dy+\int_{V_2}p_D(t/2, x,z)p_D(t/2,z,y)\,dy=:I_1+I_2.
\end{align*} On the one hand, for any $z\in V_1$,
$|z-y|\ge |x-y|/2\ge t^{1/\alpha}/2$. Then,
by Lemma \ref{l:new},
for
any $z\in V_1$, $y\in D$ and $t>0$ with $|x-y|\ge t^{1/\alpha}$,
\begin{align*}p_D(t/2,z,y)\le &
c_7\left(\frac{\delta_D(y)^{\alpha/2}f(y_1)^{\alpha/2}}{t\wedge1}\right)\frac{ t }{|z-y|^{d+\alpha}}
\le
c_8\left(\frac{\delta_D(y)^{\alpha/2}f(y_1)^{\alpha/2}}{t\wedge1}\right)\frac{
t}{|x-y|^{d+\alpha}}.\end{align*} According to \eqref{l3-1-0}, for all $x,y\in D$ with
$x_1\ge y_1$
and $t>0$ with $C_0f(y_1)^{\alpha}\le t \le
c_0t_0(y)$,
\begin{align*}\Pp^x(\tau_D>t/2)\le &
c_9\left(\frac{\delta_D(x)^{\alpha/2}
f(x_1)^{\alpha/2}}{t\wedge1}\right)\left(e^{-C_*f(x_1)^{-\alpha}t/2}+
t(1+|x|)^{-(d+\alpha-1)}\right)\\
\le&  c_{10}\left(\frac{\delta_D(x)^{\alpha/2}
f(x_1)^{\alpha/2}}{t\wedge1}\right)\left(e^{-C_*f(y_1)^{-\alpha}t/2}+
t(1+|y|)^{-(d+\alpha-1)}\right)\\
\le& c_{11}\left(\frac{\delta_D(x)^{\alpha/2}
f(x_1)^{\alpha/2}}{t\wedge1}\right)
e^{-2^{-1}(C_*/c_0)f(y_1)^{-\alpha}t},\end{align*}
 where
 in the second inequality we used the fact that $1+|x|\ge c_{12}(1+|y|)$ for all $x,y\in D$ with $x_1\ge y_1$,  and the last inequality
follows from \eqref{l3-2-1}.
Hence, for all $x,y\in D$ and $t>0$ with $C_0f(y_1)^{\alpha}\le t \le c_0t_0(y)$,
\begin{align*}I_1&\le c_8\left(\frac{\delta_D(y)^{\alpha/2}f(y_1)^{\alpha/2}}{t\wedge1}\right)\frac{
t}{|x-y|^{d+\alpha}}\Pp^x(\tau_D>t/2)\\
&\le
c_{13}\left(\frac{\delta_D(x)^{\alpha/2}f(x_1)^{\alpha/2}}{t\wedge1}\right)\left(\frac{\delta_D(y)^{\alpha/2}f(y_1)^{\alpha/2}}{t\wedge1}\right)\frac{
t}{|x-y|^{d+\alpha}}e^{-c_{2}f(y_1)^{-\alpha}t/2}.
\end{align*}
On the other hand, for every $z\in V_2$, $|z-x|\ge |x-y|/2\ge
t^{1/\alpha}/2$.
So, according to Lemma \ref{l:new}, we obtain that for every $z\in V_2$,
$$p_D(t/2,x,z)\le
c_{14}\left(\frac{\delta_D(x)^{\alpha/2}f(x_1)^{\alpha/2}}{t\wedge1}\right)\frac{
t}{|x-y|^{d+\alpha}}.$$ This along with \eqref{e:op1} yields that for all $x,y\in D$ and $t>0$ with $C_0f(y_1)^{\alpha}\le t \le
c_0t_0(y)$,
\begin{align*}I_2\le & c_{14}\left(\frac{\delta_D(x)^{\alpha/2}f(x_1)^{\alpha/2}}{t\wedge1}\right)
\frac{t}{|x-y|^{d+\alpha}}\Pp^y\big(\tau_D>t/2\big)\\
\le&
c_{15}\left(\frac{\delta_D(x)^{\alpha/2}f(x_1)^{\alpha/2}}{t\wedge1}\right)\left(\frac{\delta_D(y)^{\alpha/2}f(y_1)^{\alpha/2}}{t\wedge1}\right)
\frac{ t}{|x-y|^{d+\alpha}} e^{-c_{2}t
f(y_1)^{-\alpha}/2}.\end{align*} Therefore, according to all the
estimates above,  for any $x,y\in D$ and $t>0$ with $C_0f(y_1)^{\alpha}\le t \le
c_0t_0(y)$,
\begin{align*}p_D(t,x,y)\le & c_{16}\left(\frac{\delta_D(x)^{\alpha/2}f(x_1)^{\alpha/2}}{t\wedge1}\right) \left(\frac{\delta_D(y)^{\alpha/2}f(y_1)^{\alpha/2}}{t\wedge1}\right)
\frac{ t}{|x-y|^{d+\alpha}} e^{-c_{2 }t
f(y_1)^{-\alpha}/2}.\end{align*}

Now, the required assertion follows from both conclusions above.
\end{proof}
We
summarize  both lemmas above as follows.
\begin{proposition}\label{p3-1} Let $\Psi(t,x)$ be defined by \eqref{e:DPsi}, and $C_*=c_{2.9.2}$ be given  in \eqref{l3-1-0}. Then the following hold.
\begin{itemize}
\item[(i)] There exist  constants
$c_{\ac{1}}, c_{\ac{2}},c_{\ac{3}}>0$
such that for all
 $x,y\in D$
 with  $C_0f(y_1)^{\alpha}\le t $,
\begin{align*}
p_D(t,x,y)\ge &c_{\ac{1}}p(t,x,y)\left(\frac{\delta_D(x)^{\alpha/2}f(x_1)^{\alpha/2}}{t\wedge1}\right)\left(\frac{\delta_D(y)^{\alpha/2}f(y_1)^{\alpha/2}}{t\wedge1}\right)e^{-c_{\ac{2}}tf(y_1)^{-\alpha}}\\
\ge&c_{\ac{3}}p(t,x,y)
\Psi(t,x)\Psi(t,y)e^{-c_{\ac{2}}tf(y_1)^{-\alpha}}.
\end{align*}

\item[(ii)] For any $c_0 \ge 1$, there exist constants $c_{\ac{4}}:=c_{\ac{4}}(c_0), c_{\ac{5}}:=c_{\ac{5}}(c_0)>0$ such that   for all $x,y\in D$
 with $C_0f(y_1)^{\alpha}\le t \le
 c_0t_0(y)=c_0t_0(C_*, y)$,
\begin{align*}
p_D(t,x,y)\le& c_{\ac{4}}p(t,x,y)
\left(\frac{\delta_D(x)^{\alpha/2}f(x_1)^{\alpha/2}}{t\wedge1}\right)\left(\frac{\delta_D(y)^{\alpha/2}f(y_1)^{\alpha/2}}{t\wedge1}\right)e^{-(2c_0 \vee 4)^{-1}C_*tf(y_1)^{-\alpha}}\\
\le& c_{\ac{5}}p(t,x,y)
\Psi(t,x)\Psi(t,y)e^{-(2c_0 \vee 4)^{-1}C_*tf(y_1)^{-\alpha}}.
\end{align*} \end{itemize}
\end{proposition}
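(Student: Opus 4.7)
The plan is to derive Proposition \ref{p3-1} as a routine repackaging of Lemmas \ref{l3-3} and \ref{l3-2}, combined with (a) the elementary two-sided bound $p(t,x,y)\simeq t^{-d/\alpha}\wedge \frac{t}{|x-y|^{d+\alpha}}$ valid for all $t>0$ and $x,y\in \R^d$, and (b) two pointwise comparisons between the boundary decay factor $\frac{\delta_D(x)^{\alpha/2}f(x_1)^{\alpha/2}}{t\wedge 1}$ and $\Psi(t,x)$. Substituting (a) into the conclusions of Lemmas \ref{l3-3} and \ref{l3-2} immediately yields the first inequalities in (i) and (ii); all remaining work is purely algebraic.

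For the lower bound (i), I would invoke the trivial inequality
\[
\Psi(t,x) = \frac{\delta_D(x)^{\alpha/2}\bigl(f(x_1)^{\alpha/2}\wedge t^{1/2}\bigr)}{t\wedge 1}\wedge 1 \le \frac{\delta_D(x)^{\alpha/2}f(x_1)^{\alpha/2}}{t\wedge 1},
\]
obtained by dropping both the inner $\wedge t^{1/2}$ and the outer $\wedge 1$, and the analogous inequality at $y$; multiplying these produces the second inequality in (i) immediately, with no loss of constant.

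For the upper bound (ii), the nontrivial content is the reverse comparison
\[
\frac{\delta_D(x)^{\alpha/2}f(x_1)^{\alpha/2}}{t\wedge 1} \le C(C_0)\,\Psi(t,x),
\]
valid under the standing hypothesis $t\ge C_0 f(y_1)^{\alpha}\ge C_0 f(x_1)^{\alpha}$ (the last step uses $x_1\ge y_1$ and the non-increasing property \eqref{e:f2}). I would split on whether $t\ge f(x_1)^{\alpha}$ or $t< f(x_1)^{\alpha}$. In the first sub-case the inner $\wedge t^{1/2}$ is inactive, so $\Psi(t,x)=\frac{\delta_D(x)^{\alpha/2}f(x_1)^{\alpha/2}}{t\wedge 1}\wedge 1$, and $\frac{\delta_D(x)^{\alpha/2}f(x_1)^{\alpha/2}}{t\wedge 1}$ is bounded by a constant (using $\delta_D(x)\le c f(x_1)$, $f\le 1/4$, and $t\wedge 1\ge f(x_1)^{\alpha}\wedge 1$), so either $\Psi$ equals the ratio or $\Psi=1$ while the ratio is bounded. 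In the second sub-case, necessarily $C_0<1$ and $t<1$, and $t^{1/2}\ge C_0^{1/2}f(x_1)^{\alpha/2}$, which gives
\[
\frac{\delta_D(x)^{\alpha/2}f(x_1)^{\alpha/2}}{t} = \frac{\delta_D(x)^{\alpha/2}}{t^{1/2}}\cdot \frac{f(x_1)^{\alpha/2}}{t^{1/2}} \le C_0^{-1/2}\cdot \frac{\delta_D(x)^{\alpha/2}}{t^{1/2}},
\]
and a further split on whether $\delta_D(x)^{\alpha/2}/t^{1/2}\le 1$ closes the comparison. Applying the analogous inequality at $y$ and absorbing the resulting constants into $c_5$ completes the second inequality in (ii). No substantial obstacle is expected, since all of the probabilistic and geometric content has already been carried out inside Lemmas \ref{l3-3} and \ref{l3-2}; only bookkeeping of the minima in the definition of $\Psi$ remains.
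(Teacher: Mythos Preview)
Your proposal is correct and matches the paper's approach: the paper simply states that Proposition \ref{p3-1} ``summarizes both lemmas above,'' meaning Lemmas \ref{l3-3} and \ref{l3-2}, together with the global bound $p(t,x,y)\simeq t^{-d/\alpha}\wedge t|x-y|^{-(d+\alpha)}$. The only additional content is the comparison between $\frac{\delta_D(x)^{\alpha/2}f(x_1)^{\alpha/2}}{t\wedge 1}$ and $\Psi(t,x)$, which the paper leaves implicit and which you have correctly spelled out via the case split on $t\gtrless f(x_1)^{\alpha}$ using $\delta_D(x)\le c f(x_1)$ and $t\ge C_0 f(x_1)^{\alpha}$.
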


\begin{remark} In Proposition \ref{p3-1}, we do not require $t_0(y)$ to be bounded.
Actually, we will
treat all cases including  $\lim_{y\in D,|y|\to\infty} t_0(y)>0$
(which in particular
includes the case that
$\lim_{y\in D,|y|\to\infty} t_0(y)=\infty$)
  in
the next section.
When $\lim_{y\in D,|y|\to\infty} t_0(y)>0$, Proposition \ref{p3-1} has shown the explicit heat kernel estimates
for any finite time.
\end{remark}

\section{Case III: $t\ge C_1t_0(y)$ for some $C_1>0$}\label{section5}
In this section, we will make additional assumptions on the reference function $f$ as in Theorem \ref{Main}:
\begin{itemize}
\item[(i)] There exist constants $c, p>0$ such that $f(s)\ge c(1+s)^{-p}$ for all $s>0$;
\item[(ii)] There is a monotone function $g$ on $(0,\infty)$ such that $g(s)\simeq f(s)^{\alpha}\log(2+s)$.
\end{itemize}
As mentioned in Remark \ref{remark4.1}, under (i), for any $y\in D$, $t_0(y)\simeq
f(y_1)^\alpha
\log(2+|y|)$, where
$t_0(y)=t_0(C_*, y)$ is defined by \eqref{e:lllsss} and $C_*=c_{2.9.2}$ is the constant in \eqref{l3-1-0}.
According to the different monotone property of $g$, we will split this section into two parts.

 \subsection{\bf{Case III-1: $g$ is
non-increasing on $(0,\infty)$ such that $\lim\limits_{s\to\infty} g(s)=0$}}
 In this part, we are concerned with
 the case that
 $g$ is
non-increasing on $(0,\infty)$ and  $\lim\limits_{s\to\infty} g(s)=0$.
 Since $t_0(y)\simeq
f(y_1)^\alpha\log(2+|y|)$, we have
$\lim\limits_{y\in D,|y|\rightarrow \infty}t_0(y)=0$.

For any $t>0$, define
 \begin{align}\label{e:index}
s_0(t)=\inf\{s>0: f(s)^\alpha\le t\}
\vee 2,\quad s_1(t)=
g^{-1}(t)\vee 2,
\end{align}
where
$g^{-1}(t)=\inf\{s\ge 0: g(s)\le t\}$ and we use the convention that
$\inf\emptyset=\infty$.
It is clear that there exists a constant $C_{5.2}\in(0,1]$ such that
  \begin{align}C_{5.2} s_0(t)\le s_1(t) \quad \text{for all }t>0.\label{e:C3} \end{align}

Recall that
$C_0$ is the constant in Proposition \ref{p2-1}, $C_{3.10}$ is the constant given in \eqref{e:lllsss11} and  $\phi$ is the function defined in
\eqref{e1-1}.
\begin{lemma}\label{l4-2}{\bf (Lower bound when $C_1t_0(y)\le t\le C$
for any $C_1\ge C_0C_{3.10}$ and any $C>0$.)}\,\,
Suppose that $g$ is
non-increasing on $(0,\infty)$ such that  $\lim\limits_{s\to\infty} g(s)=0$. Then, for every $c_1 \ge C_0C_{3.10}$ and  $c_2>0$,
there exist positive constants $c_{\ac{1}}, c_{\ac{2}}, c_{\ac{3}}, c_{\ac{4}}$ $($depending on $c_1$ and $c_2)$
 such that
for every $y\in D$  and $c_1t_0(y)\le t\le c_2$,
\begin{align*}
p_D(t,x,y)\ge  &c_{\ac{1}}
\phi(x)\phi(y)
\int_{0}^{c_{\ac{2}}s_1(c_{\ac{3}}t)}
f(s)^{d-1}e^{-c_{\ac{4}}tf(s)^{-\alpha}}
\,ds.
 \end{align*}
 \end{lemma}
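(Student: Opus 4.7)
Following the heuristic of Remark~\ref{r1-4}(iii), when $t \gtrsim t_0(y)$ the dominant contribution to $p_D(t,x,y)$ comes from paths that jump from $x$ into a ``favorable'' ball $B_s \subset D$ located inside the horn at some axial coordinate $s$, wait there for time $\asymp t$, and then jump to $y$. Each of the two jumps contributes a factor $\phi(\cdot)$, and summing over $s$ produces the integral. Concretely, set $t_1 := C_0 f(x_1)^\alpha/4$, $t_3 := C_0 f(y_1)^\alpha/4$, so that $t_2 := t - t_1 - t_3 \simeq t$ (using $c_1 \ge C_0 C_{3.10}$ and \eqref{e:lllsss11}). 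For each $s > 0$, use the interior ball condition to pick $\xi_s \in D$ with $(\xi_s)_1 \simeq s$ and $\delta_D(\xi_s) \ge c_* f(s)$, and set $B_s := B(\xi_s, c_* f(s)/2) \subset D$. Construct a discrete sequence $\{s_k\}$ in $[c,S]$ with $s_{k+1} - s_k \simeq f(s_k)$ and the balls $\{B_{s_k}\}$ pairwise disjoint, where $S := c_{\ac{2}} s_1(c_{\ac{3}} t)$.

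\textbf{Main estimate.} Chapman--Kolmogorov and nonnegativity give
\begin{equation*}
p_D(t,x,y) \;\ge\; \sum_k \iint_{B_{s_k} \times B_{s_k}} p_D(t_1,x,z_1)\, p_D(t_2,z_1,z_2)\, p_D(t_3,z_2,y)\,dz_1\,dz_2.
\end{equation*}
For the outer $x$-factor with $z_1 \in B_{s_k}$ (and $s_k \le x_1$): Proposition~\ref{p2-1} applies since $t_1 \le C_0 f(x_1)^\alpha \le C_0 f((z_1)_1)^\alpha$, and unpacking $\Psi$ using $\delta_D(z_1) \ge c_* f(s_k) \ge c_* f(x_1)$ gives $\Psi(t_1,z_1) \simeq 1$, $\Psi(t_1,x) \simeq \delta_D(x)^{\alpha/2}/f(x_1)^{\alpha/2}$, and $p(t_1,x,z_1) \simeq t_1/(1+|x|)^{d+\alpha}$ (this last step requires $|x-\xi_{s_k}| \simeq 1+|x|$, i.e., $s_k \le x_1/2$). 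This yields $p_D(t_1,x,z_1) \gtrsim \phi(x)$ uniformly in $z_1 \in B_{s_k}$, whence $\int_{B_{s_k}} p_D(t_1,x,z_1)\,dz_1 \gtrsim \phi(x) f(s_k)^d$; the $y$-factor is symmetric. For the middle factor, domain monotonicity $p_D(t_2, \cdot,\cdot) \ge p_{B(\xi_{s_k}, c_* f(s_k))}(t_2,\cdot,\cdot)$ combined with scaling applied to the ball estimates \eqref{hk-s}--\eqref{hk-s1} (treating $t_2 \le f(s_k)^\alpha$ and $t_2 > f(s_k)^\alpha$ separately) gives $\iint p_D(t_2,z_1,z_2)\,dz_1\,dz_2 \gtrsim f(s_k)^d e^{-c t f(s_k)^{-\alpha}}$. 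Combining,
\begin{equation*}
p_D(t,x,y) \;\gtrsim\; \phi(x)\phi(y) \sum_k f(s_k)^d e^{-c t f(s_k)^{-\alpha}}.
\end{equation*}
Since $\Delta s_k \simeq f(s_k)$ and $s \mapsto f(s)^{d-1} e^{-ctf(s)^{-\alpha}}$ is non-increasing, a Riemann-sum comparison yields $\sum_k f(s_k)^d e^{-c t f(s_k)^{-\alpha}} \gtrsim \int_0^{S} f(s)^{d-1} e^{-ctf(s)^{-\alpha}}\,ds$.

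\textbf{Main obstacle.} The key technical challenge is that the outer-factor estimate $p_D(t_1,x,z_1) \simeq \phi(x)$ only holds uniformly on $B_{s_k}$ when $|x-\xi_{s_k}| \simeq 1+|x|$, which forces $s_k \le \min(x_1,y_1)/2$. One must therefore verify that the truncated sum over $s_k \in [c, \min(x_1,y_1)/2]$ already dominates the claimed integral up to $c_{\ac{2}} s_1(c_{\ac{3}} t)$. This uses (i) monotonicity of the integrand in $s$ (both $f(s)^{d-1}$ and $e^{-ctf(s)^{-\alpha}}$ decrease as $s$ grows), and (ii) the bound $s_1(c_3 t) \le y_1 \le x_1$, which follows from $g$ being non-increasing together with $c_3 t \gtrsim c_3 c_1 g(y_1)$, after shrinking $c_{\ac{2}}, c_{\ac{3}}$ sufficiently. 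A secondary technicality is the simultaneous construction of $\{\xi_s\}$ and $\{s_k\}$ respecting both the (non-uniform) interior ball condition and the disjointness of the balls $B_{s_k}$.
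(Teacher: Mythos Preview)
Your approach is correct but takes a more laborious route than the paper's. The paper splits $p_D(2t,x,y)=\int_D p_D(t,x,z)\,p_D(t,z,y)\,dz$ into \emph{two equal} time legs and applies the already--established all-time lower bound of Proposition~\ref{p3-1}(i) (equivalently Lemma~\ref{l3-3}) to each factor; that result immediately produces the boundary factors $\Psi(t,x)$, $\Psi(t,y)$, $\Psi(t,z)^2$ \emph{and} the exponential $e^{-ctf(z_1)^{-\alpha}}$, so all that remains is to integrate over the thin strip $\widetilde D=\{z:|\tilde z|\le c f(z_1),\,z_1\le c_3 s_1(c_4 t)\}$, which directly yields the Jacobian $f(s)^{d-1}$ and hence the claimed integral. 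Your three-way split with short outer times $t_1\simeq f(x_1)^\alpha$, $t_3\simeq f(y_1)^\alpha$ uses only Proposition~\ref{p2-1} for the outer legs and then re-derives the exponential survival factor by a separate ball estimate for the middle leg --- this is essentially re-proving a piece of Lemma~\ref{l3-3} inside the current argument. Likewise the discretisation into balls $B_{s_k}$ with a Riemann-sum comparison is unnecessary once one integrates over $\widetilde D$ continuously. What your route buys is independence from Section~4: the proof uses only Proposition~\ref{p2-1} and the ball estimates \eqref{hk-s}--\eqref{hk-s1}. What the paper's route buys is brevity: one line of Chapman--Kolmogorov plus one application of Proposition~\ref{p3-1}(i).

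One small gap in your write-up: the outer-factor estimate $p_D(t_1,x,z_1)\gtrsim\phi(x)$ (and its $y$-analogue) needs $|x-z_1|\simeq 1+|x|$, hence $s_k\le y_1/2$, and your $s_k$-range $[c,y_1/2]$ is empty when $|y|$ is bounded. The paper handles that regime separately (first paragraph of its proof) by a direct appeal to Proposition~\ref{p3-1}(i), noting that in this case $t_0(y)\ge c_0>0$, so $t\in[c_1c_0,c_2]$ is bounded away from zero and the target integral is bounded above by a constant; you should do likewise, or include a single fixed ball near the origin to cover this case.
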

\begin{proof}
Fix $c_1 \ge C_0C_{3.10}$
and $c_2>0$.
 Since  $s_1(t_0(y))\asymp |y|\vee 2$,
there exist $c_3,c_4$
 such that
$c_3s_1(c_4t)\le (|y|\vee2)/2$ for all $t\ge c_1
t_0(y)$. 
Recall that we
take $c_1\ge C_0C_{3.10}$, and
assume that $D$ is a $C^{1,1}$-horn-shaped region satisfying $\{x\in D: x_1>2\}=D_f^2$
and $C_0f(y_1)^\alpha \le c_1
t_0(y)$
for all $y \in D$. 
Recall also that we assume that $x_1\ge y_1$.
Then, one can  choose $M\ge 2$ large enough
so that $|x| \ge 2|y|/3$ for every $|y|>M$.

We first consider the case that $|y|\le M$.
Note that there exists $c_0>0$
such that $t_0(y)>c_0$ for all $y\in D$ with $|y| \le M$.
Since
$s_1(c_4
t)\le (2c_3)^{-1}(|y|\vee2)\le (2c_3)^{-1}M$ and
$(c_1
c_0) \vee (C_0f(y_1)^\alpha) \le c_1t_0(y)$,
by Proposition \ref{p3-1}
(i)
 we have that
for every $y\in D$ with $|y| \le M$ and any
$c_1t_0(y)\le t\le c_2$,
\begin{equation}\label{e:refe-1}\begin{split}
 p_D(t,x,y)
&\ge   c_5\delta_D(x)^{\alpha/2}f(x_1)^{\alpha/2}\delta_D(y)^{\alpha/2}f(y_1)^{\alpha/2}
 \left(1 \wedge \frac{1}{|x-y|^{d+\alpha}} \right)e^{-c_6f(M)^{-\alpha}}\\
&\ge   c_7
\phi(x)\phi(y)
\int_0^{s_1(c_4 t)}
f(s)^{d-1}e^{-c_8tf(s)^{-\alpha}}ds.
 \end{split}\end{equation}
 Here in the second inequality
 we used the facts that
$|x-y|\le c_{9}(1+|x|)$ and
 $s_1(c_4
 t)\le (2c_3)^{-1}(|y|\vee2)\le (2c_3)^{-1}M$ for every
 $|y|\le M$ yielding
$$\int_0^{s_1(c_4t)}f(s)^{d-1}e^{-c_{8}tf(s)^{-\alpha}}\,ds\le
\int_0^{(2c_3)^{-1}M}f(s)^{d-1}e^{-c_{8}tf(s)^{-\alpha}}\,ds\le c_{10}.$$

 For the remainder of the proof, we assume that  $|y| > M$.
Recall that
$c_3s_1(c_4t)
\le |y|/2$ for all $t\ge c_1t_0(y)$.
According to Propositions \ref{p2-1} and \ref{p3-1},
for all $x,y,z\in D$ and
 $c_2\ge t\ge c_1t_0(y)$ with
$z_1\le c_3s_1(c_4t)\le |y|/2$
(which implies that
$t\le c_{11}t_0(z)$),
we have that
\begin{align*}
p_D(t,x,z)\ge c_{12}\Psi(t,x)\Psi(t,z)\left(\frac{t}{|x-z|^{d+\alpha}}\wedge t^{-d/\alpha}\right) e^{-c_{13}tf(z_1)^{-\alpha}}
\ge c_{14} \Psi(t,x)\Psi(t,z)\frac{te^{-c_{13}tf(z_1)^{-\alpha}}}{(1+|x|)^{d+\alpha}}
\end{align*}
and $$p_D(t,y,z) \ge c_{15} \Psi(t,y)\Psi(t,z)\frac{t}{(1+|y|)^{d+\alpha}} e^{-c_{16}tf(z_1)^{-\alpha}},
$$ where we used the fact that
$z_1\le |y|/2 \le 3|x|/4$.

Now, we let
$\widetilde D:=\{z:=(z_1,\tilde z)\in D: |\tilde z|\le
2c_{17}f(z_1)\}\subseteq D$ for some
constant $c_{17}>0$ (small enough) such that
$\delta_D(z)\ge c_{17}
f(z_1)$
 for all $z\in \widetilde D$. In particular, for any $z\in \widetilde D$
with $z_1\le c_3s_1(c_4t)$
and $c_1t_0(y)\le t \le c_2$,
$\Psi(t,z)\ge
c_{18}
 {f(z_1)^\alpha}/{t}\ge c_{19}e^{-tf(z_1)^{-\alpha}}$.
Then, combining both estimates above together yields that for all $x,y\in
D$,
$c_1t_0(y)\le t \le c_2$
 and $z\in \widetilde
D$ with $z_1\le c_3s_1(c_4 t)$,
\begin{align*}
 p_D(t,x,z) \ge c_{20} \frac{\Psi(t,x)t}{(1+|x|)^{d+\alpha}} e^{-c_{21}tf(z_1)^{-\alpha}}\quad\text{and}\quad p_D(t,y,z) \ge c_{20}\frac{ \Psi(t,y)t}{(1+|y|)^{d+\alpha}} e^{-c_{21}tf(z_1)^{-\alpha}}.
\end{align*}
Hence,  for all $c_1t_0(y)\le t \le c_2$,
\begin{align*}
p_D(2t,x,y)
&\ge \int_{\{z\in \widetilde D: z_1\le c_3s_1(c_4t)\}} p_D(t,x,z)p_D(t,z,y)\,dz\\
&\ge c_{20}^2\frac{\Psi(t,x)t}{(1+|x|)^{d+\alpha}}\frac{ \Psi(t,y)t}{(1+|y|)^{d+\alpha}} \int_{\{z\in \widetilde D: z_1\le c_3s_1(c_4t)\}\}}  e^{-2c_{21}tf(z_1)^{-\alpha}}\,dz\\
&\ge c_{22}
\phi(x)\phi(y)
 \int_{\{z\in \widetilde D: z_1\le c_3s_1(c_4t)\}}  e^{-2c_{21}tf(z_1)^{-\alpha}}\,dz,
 \end{align*} where  the last inequality follows from the definition of $\Psi(t,x)$
 and the fact that $t\ge C_0f(y_1)^\alpha\ge C_0f(x_1)^\alpha$.

Furthermore, note that for all $c_1t_0(y)\le t \le c_2$,
it holds that
$0<c_{23}:=C_{5.2}c_3s_0(c_2c_4)\le C_{5.2}c_3s_0(c_4t) \le c_3s_1(c_4t)$.
 Thus, by the fact $\{x\in D:x_1>2\}=D_f^2$, we have
\begin{align*}
 &\int_{\{z\in \widetilde D: z_1\le c_3s_1(c_4t)\}}  e^{-2c_{21}tf(z_1)^{-\alpha}}\,dz\\
&\ge c_{24}\bigg[\Big(\int_2^{c_3s_1(c_4t)}f(s)^{d-1}e^{-2c_{21}tf(s)^{-\alpha}}\,ds\Big)
\I_{\{c_3s_1(c_4t)>2\}}+\Big(\int_{\{z\in \widetilde D: z_1\le  c_{23}
 \}}  e^{-2c_{21}tf(z_1)^{-\alpha}}\,dz\Big)
\bigg]\\
&\ge c_{25}\int_0^{c_3s_1(c_4t)}f(s)^{d-1}e^{-2c_{21}tf(s)^{-\alpha}}\,ds,
\end{align*}
where the last inequality follows from
the property that for every $c_1t_0(y)\le t \le c_2$,
\begin{align*}
\int_{\{z\in \widetilde D: z_1\le c_{23}\}}  e^{-2c_{21}tf(z_1)^{-\alpha}}\,dz\ge c_{26}\ge c_{27}
\int_0^2 f(s)^{d-1}e^{-2c_{21}tf(s)^{-\alpha}}\,ds.
\end{align*}
By now we have obtained the desired assertion.
 \end{proof}

 Since $t_0(y)\simeq
f(y_1)^\alpha
\log(2+|y|)$ for all $y\in D$ and the function
$g(s)\simeq f(s)^\alpha \log(2+s)$ is non-increasing on $(0,\infty)$,  for any $y,z\in D$ with $|z|\ge |y|/8$, $t_0(y)\ge
c_0t_0(z)$ holds for some constant $c_0>0$
independent of $y$ and $z$. In
particular, according to \eqref{l3-1-0}, we know that for any $z\in
D$ such that $|z|\ge |y|/8$ and any
 $c_1t_0(y)\le t \le c_2$ (with any fixed $c_1$ and $c_2$),
\begin{equation}\label{e:kkll8}
\begin{split}
\Pp^z(\tau_D>t)&\le c_3\Psi(t,z)\min\Big\{ e^{-c_{2.9.2}f(z_1)^{-\alpha}t}+\frac{t}{(1+|z|)^{d+\alpha-1}}, e^{-c_{2.9.2}t}\Big\}\\
&\le c_3\Psi(t,z)\bigg( e^{-c_{2.9.2}f(z_1)^{-\alpha}t}+\frac{t}{(1+|z|)^{d+\alpha-1}}\bigg) \le c_4\Psi(t,z) \left(\frac{t}{(1+|z|)^{d+\alpha-1}}\right)^{q},
\end{split}
\end{equation} where $q= c_0c_1 \wedge 1 \le 1$
and in the last inequality we used the facts that $c_0c_1t_0(z)\le c_1 t_0(y)\le t \le c_2 (1+|z|)$ and
\begin{align*}e^{-{c_{2.9.2}f(z_1)^{-\alpha}t}}&\le
e^{-c_0 c_1 c_{2.9.2} f(z_1)^{-\alpha}t_0(z)}=\left(\frac{t_0(z)}{(1+|z|)^{d+\alpha-1}}\right)^{c_0 c_1}
 \le c_5
 \left(\frac{t}{(1+|z|)^{d+\alpha-1}}\right)^{c_0 c_1}.\end{align*}

To consider upper bounds of $p^D(t,x,y)$ we will frequently use \eqref{e:kkll8}.

\begin{lemma}\label{l4-1} {\bf(Upper bound when $C_1t_0(y)\le t\le C$ for some $C_1$
and for any $C>C_1$.)}\,\,
Suppose that $g$ is
non-increasing on $(0,\infty)$ such that $\lim\limits_{s\to\infty} g(s)=0$.
Then there exists $c_1>
C_0C_{3.10}$ such that for every $c_2>c_1$, we can find  positive constants
$c_{\ac{1}}$, $c_{\ac{2}}$, $c_{\ac{3}}$ and $c_{\ac{4}}$ $($depending on $c_1$ and $c_2)$ so that
for every $y\in D$  with $c_1t_0(y)\le t\le c_2$,
\begin{align*}
p_D(t,x,y)\le  &c_{\ac{1}}
\phi(x)\phi(y)
\int_{0}^{c_{\ac{2}}s_1(c_{\ac{3}}t)}
f(s)^{d-1}e^{-c_{\ac{4}}tf(s)^{-\alpha}}\,ds.
 \end{align*}
\end{lemma}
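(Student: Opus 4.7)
The plan is to exploit the Chapman--Kolmogorov identity
\[
p_D(t,x,y) \le \int_D p_D(t/2, x, z)\, p_D(t/2, z, y)\, dz,
\]
splitting the $z$-integration into the \lq\lq core tube\rq\rq\ $A := \{z \in D : z_1 \le C_3 s_1(C_4 t)\}$, where the lower bound of Lemma \ref{l4-2} concentrates its mass, and its complement $A^c$. The constants $C_3, C_4$ will be calibrated so that on $A^c$ one has $t/2 \ge c' t_0(z)$, which makes the survival probability estimate \eqref{e:kkll8} available with nontrivial polynomial decay $(t/(1+|z|)^{d+\alpha-1})^q$ for some $q \in (0,1]$, while on $A$ one has $t/2 \le c_0 t_0(z)$ so that Proposition \ref{p3-1}(ii) is applicable to each factor in the integrand.

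On the core $A$, the hypothesis $t \ge c_1 t_0(y)$ together with $t_0(y) \simeq f(y_1)^\alpha \log(2+|y|)$ yields $s_1(C_4 t) \le |y|/2$, hence for $z \in A$ both $|x-z|$ and $|y-z|$ are comparable to $|x|$ and $|y|$ respectively (using also $x_1 \ge y_1$). Applying Proposition \ref{p3-1}(ii) extracts the prefactors $\delta_D(x)^{\alpha/2} f(x_1)^{\alpha/2}/(1+|x|)^{d+\alpha} = \phi(x)$ and $\phi(y)$ from $p_D(t/2, x, z)\, p_D(t/2, z, y)$, leaving $\Psi(t,z)^2\, e^{-c''t f(z_1)^{-\alpha}}$ in the integrand. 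Integrating $\Psi(t,z)^2$ over the transverse slice $\{\tilde z: |\tilde z| \le f(z_1)\}$ produces the cross-sectional volume $\simeq f(z_1)^{d-1}$, and integrating $z_1$ over $(0, C_3 s_1(C_4 t))$ yields the target bound.

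For the tail $A^c$, I would combine Lemma \ref{l:new} (which gives $p_D(t/2, x, z) \le c\, t\, \Psi(t, x) / |x-z|^{d+\alpha}$ whenever $|x-z| \ge t^{1/\alpha}/2$) with the survival probability estimate \eqref{e:kkll8} applied to $\Pp^z(\tau_D > t/2)$. This forces the integrand to decay at least like $\Psi(t,z) \cdot (t/(1+|z|)^{d+\alpha-1})^q$, so that the integral over the thin tube $\{z \in D : z_1 > C_3 s_1(C_4 t)\}$ (of cross-section $\simeq f(z_1)^{d-1}$) is dominated by the core contribution. When $|x|$ is bounded, one instead appeals to Proposition \ref{p2-1} to bound $p_D(t/2, x, z)$ by $c\, t^{-d/\alpha}\, \Psi(t, x)\, \Psi(t, z)$ and absorbs the missing $(1+|x|)^{-(d+\alpha)}$ into the constant since $1+|x| \simeq 1$.

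The main obstacle is the intermediate regime in $A^c$ where $t/2 \sim t_0(z)$: here neither Proposition \ref{p3-1}(ii) nor a clean ultracontractive ground-state bound is directly applicable, so one must carefully chain Lemma \ref{l:new} with \eqref{e:kkll8} while tracking the exponent $q$ --- in particular, $c_1$ must be chosen large enough to render $q = 1$, or a finer time-iteration is needed. A secondary subtlety is matching the specific constants $c_{\ac{2}}, c_{\ac{3}}, c_{\ac{4}}$ in the target integral to those arising from the split threshold $C_3 s_1(C_4 t)$, which in turn fixes the range $c_1 t_0(y) \le t \le c_2$ of validity in the statement.
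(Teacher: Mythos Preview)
Your approach on the core region $A$ is essentially sound and mirrors the spirit of the lower bound in Lemma \ref{l4-2}. However, the tail region $A^c$ contains a genuine gap that you correctly sense but undersell: a single Chapman--Kolmogorov step combined with the survival estimate \eqref{e:kkll8} is \emph{insufficient}, even after choosing $c_1$ large enough to force $q=1$. With $q=1$ the best you extract from $\Pp^z(\tau_D>t/2)$ is the decay $(1+|z|)^{-(d+\alpha-1)}$, which falls exactly one power short of the $(1+|z|)^{-(d+\alpha)}$ required to produce $\phi(y)$. Concretely, since $y$ itself lies in $A^c$ (because $t\ge c_1 t_0(y)$ forces $|y|\gtrsim s_1(ct)$), the neighbourhood $\{z:|z-y|\le t^{1/\alpha}\}$ sits inside $A^c$; there Lemma \ref{l:new} cannot be applied to $p_D(t/2,z,y)$, and bounding that piece by $\Psi(t,x)\,t\,|x-y|^{-(d+\alpha)}\cdot\Pp^y(\tau_D>t/2)$ yields a contribution comparable to $\phi(x)\,\Psi(t,y)\,t\,(1+|y|)^{-(d+\alpha-1)}$, which in general exceeds $\phi(x)\phi(y)\int_0^{c\,s_1(c't)}f(s)^{d-1}e^{-c''tf(s)^{-\alpha}}\,ds$ by the unbounded factor $(1+|y|)/\bigl(\text{integral}\bigr)$.

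The paper closes this gap by a different and essentially iterative mechanism. It does \emph{not} split the $z$-integral into core and tail. Instead, starting from the crude on-diagonal bound $p_D(t,v,u)\le c(1+|y|)^{dp}$ (this is where the hypothesis $f(s)\ge c(1+s)^{-p}$ enters, which your sketch never invokes), it alternates the dichotomy $|u|\gtrless |z|/4$ together with Lemma \ref{l:new} and \eqref{e:kkll8} to obtain, for $|z|\ge |y|/2$,
\[
p_D(2t,z,u)\le c\,\Psi(t,z)\Bigl(\tfrac{t^q}{(1+|z|)^{q(d+\alpha-1)-dp}}\vee \tfrac{t}{(1+|z|)^{d+\alpha}}\Bigr),
\]
and then iterates this $N$ times (with $N$ chosen so that $(N-1)q(d+\alpha-1)-dp\ge d+\alpha$) to reach $p_D(Nt,z,u)\le c\,t\,\Psi(t,z)(1+|z|)^{-(d+\alpha)}$. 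One more application of Chapman--Kolmogorov against $\Pp^u(\tau_D>t)$ gives the pointwise bound \eqref{l4-1-5}, valid for $z\in\{x,y\}$, which pulls out $\phi(x)\phi(y)$ cleanly and reduces the problem to estimating $L(t)=\int_D K(t,u)^2\,du$ with $K(t,u)=e^{-c\,tf(u_1)^{-\alpha}}+t(1+|u|)^{-(d+\alpha-1)}$; the three-region split of this integral (by $s_0(t)$ and $s_1(t)$) then produces the target. Your ``finer time-iteration'' remark points in the right direction, but the iteration is the heart of the argument, not an optional refinement.
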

\begin{proof}
Recall that
for any $z, y\in D$ with $|z|\ge |y|/8$, $t_0(y)\ge
c_0t_0(z)$ holds for some constant $c_0>0$ independent of $z, y$.
As explained in the proof of Lemma \ref{l4-2}, $C_0f(y_1)^\alpha\le c_1t_0(y)$ for every
$y\in D$ and $c_1>C_0C_{3.10}$, and we can choose $M$ large enough such that $|x| \ge 2|y|/3$ for every
$y\in D$ with $|y|>M$.

Note that there exists $c_3>0$
such that $t_0(y)>c_3$ for $y\in D$ with $|y| \le M$.
Thus, for every $y\in D$ with $|y| \le M$ and
$c_1t_0(y) \le t \le c_2$, it holds that
\begin{align}
\label{e:fde}
1 \wedge \frac{1}{|x-y|^{d+\alpha}} \le \frac{c_4} {(1+|x|)^{d+\alpha}} \le \frac{c_4
(1+M)^{d+\alpha}}
{ (1+|x|)^{d+\alpha}(1+|y|)^{d+\alpha}}\end{align}
and
$(c_1c_3 \vee C_0f(y_1)^\alpha ) \le t\le c_2(1 \wedge c_3^{-1}t_0(y))$.
Thus, by applying   Proposition \ref{p3-1}(ii) and \eqref{e:fde}, we get that
\begin{equation}\label{l4-1-0}
\begin{split}
 p_D(t,x,y)
&\le
c_5{\delta_D(x)^{\alpha/2}f(x_1)^{\alpha/2}} {\delta_D(y)^{\alpha/2}f(y_1)^{\alpha/2}} \left(1 \wedge \frac{1}{|x-y|^{d+\alpha}} \right)e^{-c_6f(M)^{-\alpha}}\\
&\le  c_7
\phi(x)\phi(y)
\int_0^{c_8s_1(c_9t)}f(s)^{d-1}e^{-c_{10}tf(s)^{-\alpha}}ds.
\end{split}
\end{equation}
 Here in the second inequality we have used
 the facts that for $y\in D$ with $|y|\le M$ and $c_1c_3\le c_1t_0(y)\le t\le c_2$,  (by noting that $s_1(t)\ge 2$ for all $t>0$),
\begin{equation}\label{l4-1-1a}
\int_0^{c_8s_1(c_9t)}f(s)^{d-1}e^{-c_{10}tf(s)^{-\alpha}}ds\ge
\int_0^{2c_8}f(s)^{d-1}e^{-c_{10}c_1c_3f(s)^{-\alpha}}ds\ge c_{11}.
\end{equation}

Next, we suppose that $|y| > M$.
It follows from the assumption $f(s)\ge c(1+s)^{-p}$
that, for any $t\ge c_1t_0(y)\ge C_0f(y_1)^\alpha$ and $v,u\in D$,
\begin{equation}\label{l4-1-3}
p_D(t,v,u)\le c_{12}t^{-d/\alpha}\le c_{13}f(y_1)^{-d}\le c_{14}(1+|y|)^{dp}.
\end{equation}

Fix large $N$ such that $(N-1)q_0-dp\ge d+\alpha$ and $(N-1)q\ge1$, where
$q_0:=q(d+\alpha-1)$ and $q>0$ is the  constant in \eqref{e:kkll8}.
Suppose that $z,u\in D$ satisfies $|z|\ge |y|/2$ and $ c_1t_0(y)\le t \le c_2$.
Choose
$M$ larger if necessary
such that ${3|z|}/{4}\ge {3|y|}/{8}\ge {3M}/{8}\ge (3Nc_2)^{1/\alpha}\ge (3Nt)^{1/\alpha}$.
Then,
\begin{equation}\label{l4-1-4}
\begin{split}
p_D(2t,z,u)\le & \begin{cases}
\displaystyle \int_Dp_D(t,z,v)p_D(t,v,u)\,dv, &|u|\ge |z|/4\\
c_{15}\Psi(t,z){t}{|z-u|^{-(d+\alpha)}},&|u|<|z|/4\end{cases}\\
\le&\begin{cases} c_{14}(1+|y|)^{dp}\displaystyle \int_Dp_D(t,z,v)\,dv, &|u|\ge |z|/4\\
c_{15}\Psi(t,z){t}{|z-u|^{-(d+\alpha)}},&|u|<|z|/4\end{cases}\\
\le&c_{16}\begin{cases} (1+|y|)^{dp}\Psi(t,z)\left(\frac{t}{(1+|z|)^{d+\alpha-1}}
\right)^q, &|u|\ge |z|/4\\
\Psi(t,z)t(1+|z|)^{-(d+\alpha)},&|u|<|z|/4\end{cases}\\
\le& c_{17}\Psi(t,z) \left(\frac{
t^q}
{(1+|z|)^{q_0-dp}} \vee \frac{t}{(1+|z|)^{d+\alpha}}\right),
\end{split}
\end{equation}
where the first inequality follows from
Lemma \ref{l:new}
because $|z-u|\ge {3|z|}/{4}\ge (Nt)^{1/\alpha}$ for every $|u|<|z|/4$,
the second
inequality is due to \eqref{l4-1-3}, in the third inequality we
have used \eqref{e:kkll8},
and the last inequality is due to $|z|\ge |y|/2$ and $t\le c_2$.

Furthermore, we can obtain that for any
$z,u \in D$ with $|z|\ge |y|/2$ and any $c_1t_0(y)\le t \le c_2$,
\begin{align*} p_D(3t,z,u)
&\le \begin{cases} \displaystyle \int_Dp_D(2t,z,v)p_D(t,v,u)\,dv, &|u|\ge |z|/4\\
c_{18}\Psi(t,z){t}{|z-u|^{-(d+\alpha)}},&|u|<|z|/4\end{cases}\\
&\le \begin{cases}
 c_{19}\Psi(t,z) \left(\frac{
t^q}{(1+|z|)^{q_0-dp}} \vee \frac{t}{(1+|z|)^{d+\alpha}}\right)
 \displaystyle \int_Dp_D(t,v,u)\,dv, &|u|\ge |z|/4\\
c_{18}\Psi(t,z){t}{|z-u|^{-(d+\alpha)}},&|u|<|z|/4\end{cases}\\
&\le c_{20}\begin{cases}\Psi(t,z)\left(\frac{
t^q}{(1+|z|)^{q_0-dp}}\vee \frac{t}{(1+|z|)^{d+\alpha}}\right)
\left[1 \wedge \left(\frac{t}{(1+|z|)^{d+\alpha-1}}\right)^q \right], &|u|\ge |z|/4\\
\Psi(t,z)t(1+|z|)^{-(d+\alpha)},&|u|<|z|/4\end{cases}\\
&\le c_{21}\Psi(t,z)\left(\frac{
t^{2q}}
{(1+|z|)^{2q_0-dp}} \vee \frac{t}{(1+|z|)^{d+\alpha}}\right),
\end{align*}
where the first inequality is due to
Lemma \ref{l:new},
the second inequality follows from
\eqref{l4-1-4}, and  we have used \eqref{e:kkll8} again
and the fact that $|u|\ge |z|/4\ge |y|/8$
in the third
inequality.

Since $(N-1)q_0-dp\ge d+\alpha$,
$(N-1)q\ge1$ and
 $|z-u|\ge (Nt)^{1/\alpha}$ for every $|u|<|z|/4$ and $|z|\ge |y|/2$,
we can iterate
the argument above $N$ times
to obtain
 that for all
$z,u\in D$ with $|z|\ge |y|/2$ and all $c_1t_0(y)\le t \le c_2$,
$$p_D(Nt,z,u)\le c_{22}t\Psi(t,z)(1+|z|)^{-d-\alpha}.
$$
Combining this with \eqref{l3-1-1},
we further obtain that for any $u,z\in D$ with
$|z|\ge |y|/2$
and $c_1t_0(y)\le t \le c_2$,
\begin{equation}\label{l4-1-5}
\begin{split}
 p_D((N+1)t,u,z)
&=\int_Dp_D(t,u,v)p_D(Nt,v,z)\,dv\\
&\le c_{22} t\Psi(t,z)(1+|z|)^{-d-\alpha} \int_Dp_D(t,u,v)\,dv\\
&\le c_{23} t\Psi(t,z)(1+|z|)^{-d-\alpha}\left(e^{-c_{24}t f(u_1)^{-\alpha}}+t(1+|u|)^{-(d+\alpha-1)}\right).
\end{split}
\end{equation}
Since $|x| \ge 2|y|/3$,
by \eqref{l4-1-5} we arrive at that
for
every $c_1t_0(y)\le t \le c_2$,
\begin{equation}\label{l4-1-6}
\begin{split}
p_D(2(N+1)t,x,y)&=\int_Dp_D((N+1)t,x,u)p_D((N+1)t,y,u)\,du\\
&\le c_{25}t^2\Psi(t,x)\Psi(t,y)(1+|x|)^{-d-\alpha}
(1+|y|)^{-d-\alpha}L(t),
\end{split}
\end{equation}
where
$$L(t):=\int_D K(t,z)^2\, dz \quad \text{ and } \quad K(t,z):=e^{-c_{24}t
f(z_1)^{-\alpha}}+t(1+|z|)^{-(d+\alpha-1)}.$$

Moreover, thanks to the non-increasing property of $g(s)\simeq f(s)^\alpha \log(2+s)$, it is not difficult to verify that for all $c_1t_0(y)\le t \le c_2$
  \begin{align*}
K(t,z)\le c_{26}
\begin{cases}
1&\text{if}\ 0<z_1\le c_{27}s_0(c_{28}t);\\
 e^{-c_{24}tf(z_1)^{-\alpha}}&\text{if}\ c_{27}s_0(c_{28}t)<z_1\le c_{29}s_1(c_{30}t);\\
t(1+|z|)^{-(d+\alpha-1)}&\text{if}\ z_1>c_{29}s_1(c_{30}t).
\end{cases}
\end{align*}
Write
\begin{align*}
L(t)&= \int_{\{z\in D:
z_1\le  c_{27}s_0(c_{28}t)\}}K(t,z)^2 \,dz +
\int_{\{z\in D: c_{27}s_0(c_{28}t)< z_1\le c_{29} s_1(c_{30}t)\}}K(t,z)^2\,dz\\
&\quad +\int_{\{z\in D: z_1>c_{29}s_1(c_{30}t)
\}}
K(t,z)^2 \,dz =:L_1+L_2+L_3.\end{align*}
 Therefore, using the facts that
 $s_0(t)\ge 2$ and $\{x\in D: x_1>2\}=D_f^2$,
 \begin{align*}L_1\le &
 c_{31}\left(\int_{c_{27}}^{c_{27}s_0(c_{28}t)}  f(s)^{d-1}\,ds+1\right)\le
 c_{32}\left(\int_0^{c_{27}s_0(c_{28}t)}f(s)^{d-1}e^{-tf(s)^{-\alpha}}\,ds+1\right),\\
L_2\le & c_{33}\int_{c_{27}s_0(c_{28}t)}^{c_{29}s_1(c_{30}t)}f(s)^{d-1}e^{-2c_{24}tf(s)^{-\alpha}}\,ds,\\
L_3\le &
 c_{34}\int_{\{z\in D: z_1>c_{29}s_1(c_{30}t)\}}(1+|z|)^{-2(d+\alpha-1)}\,dz\le c_{35},
\end{align*}
 where the inequality for $L_1$ follows from the argument of \eqref{l4-1-1a} and the fact that
 $e^{-tf(s)^{-\alpha}}\ge c_{36}$ for every $0\le s\le c_{27}s_0(c_{28}t)$.
 Hence,
 according to the proof of \eqref{l4-1-1a} again,
 $$L(t)\le c_{36}\int_0^{c_{29}s_1(c_{30}t)}f(s)^{d-1}e^{-c_{37}tf(s)^{-\alpha}}\,ds.$$
  This, along with \eqref{l4-1-6}
 (by replacing $2(N+1)t$ with $t$), the definition of $\Psi(t,x)$ and
 \eqref{e:f3}, yields that  for all $2c_{1}(N+1)t_0(y)\le t \le c_2$,
\begin{align*}
 p_D(t,x,y)
&\le   c_{38} \Psi(2^{-1}(N+1)^{-1}t,x)\Psi(2^{-1}(N+1)^{-1}t,y)\frac{t}{(1+|x|)^{d+\alpha}}\frac{t}{(1+|y|)^{d+\alpha}}\\
 &\quad \times \int_{0}^{c_{29} s_1(c_{30}2^{-1}(N+1)^{-1}t)}  f(s)^{d-1} e^{-c_{37}2^{-1}(N+1)^{-1}t f(s)^{-\alpha}}\,ds \\
 & \le  c_{39}
 \phi(x) \phi(y)
 \int_{0}^{c_{32}s_1(c_{40}t)} f(s)^{d-1}e^{-c_{41}tf(s)^{-\alpha}}\,ds,\end{align*}
proving the desired assertion.
 \end{proof}

By Lemmas \ref{l4-2} and \ref{l4-1}, we further have the following statement.

\begin{proposition}\label{l4-3o}
Suppose that $g$ is
non-increasing on $(0,\infty)$ such that $\lim\limits_{s\to\infty} g(s)=0$.
Then there are constants $c_{\ac{i}}>0$ $(i=1,2,\cdots, 8)$
such that for all $x,y\in D$ and $t\ge c_{\ac{1}} t_0(y)$,
\begin{align*}&c_{\ac{2}}
\phi(x)\phi(y)
\max\Big\{\int_{0}^{c_{\ac{3}}s_1(c_{\ac{4}}t)}
f(s)^{d-1}e^{-c_{\ac{5}}tf(s)^{-\alpha}}\,ds, e^{-c_{\ac{6}} t}\Big\}\\
&\le p_D(t,x,y)\le
c_{\ac{7}}
\phi(x)\phi(y)
\max\Big\{\int_{0}^{c_{\ac{8}}s_1(c_{\ac{9}}t)}
 f(s)^{d-1}e^{-c_{\ac{10}}tf(s)^{-\alpha}}\,ds, e^{-c_{\ac{11}} t}\Big\}.\end{align*}
\end{proposition}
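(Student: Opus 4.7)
The proposition is a consolidation of the two-sided bounds from Lemmas \ref{l4-2} and \ref{l4-1}, which give the comparison $p_D(t,x,y) \simeq \phi(x)\phi(y)\int_0^{cs_1(c't)} f(s)^{d-1} e^{-c''tf(s)^{-\alpha}}\,ds$ only on the bounded range $c_1 t_0(y) \le t \le c_2$. My plan is therefore to extend to arbitrarily large $t$ and then absorb the extra $e^{-ct}$ term into the outer $\max$. For $c_1 t_0(y) \le t \le T_0$ with $T_0$ a large fixed constant, Lemmas \ref{l4-2} and \ref{l4-1} apply directly with $c_2 = T_0$. In this bounded $t$-range the integral is uniformly bounded below by a positive constant (integrate $f(s)^{d-1}$ over a fixed small interval near zero where $f$ is bounded away from zero) and above by a constant (using $e^{-c''tf(s)^{-\alpha}}\le 1$ and $f$ bounded), so choosing $c_{11}$ small in the upper bound and $c_5, c_6$ large in the lower bound absorbs the $e^{-ct}$ summand into the integral without loss.

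For $t > T_0$ I would invoke the intrinsic ultracontractivity of $(P_t^D)_{t\ge 0}$, which under the standing hypothesis that $g$ is non-increasing with $\lim_{s\to\infty}g(s)=0$ is provided by [Kw, Theorem 5]; cf.\ Remark \ref{r1-4}(iii). Together with the positivity of the spectral gap $\lambda_D$ and the comparability $\phi \simeq \phi_1$ for the ground state $\phi_1$, the standard semigroup decomposition
\[
p_D(t,x,y) = \int_D\int_D p_D(T_0/2,x,z)\,p_D(t-T_0,z,w)\,p_D(T_0/2,w,y)\,dz\,dw,
\]
combined with the eigenfunction identity $P_{T_0/2}^D \phi_1 = e^{-\lambda_D T_0/2}\phi_1$, yields the two-sided estimate $p_D(t,x,y) \simeq \phi(x)\phi(y) e^{-\lambda_D t}$ uniformly for $t>T_0$. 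The upper bound uses the IU estimate on the middle factor and $\int_D p_D(T_0/2,x,z)\phi(z)\,dz \simeq e^{-\lambda_D T_0/2}\phi(x)$; the lower bound comes from retaining only the first term of the $L^2$-spectral expansion.

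To bring this into the claimed form, I take $c_{11}\le\lambda_D$ in the upper bound so that $e^{-\lambda_D t}\le e^{-c_{11}t}\le \max\{\int\cdots, e^{-c_{11}t}\}$; for the lower bound I take $c_6\ge\lambda_D$ and $c_5\ge\lambda_D f(0)^\alpha$, so that for $t>T_0$ (where $s_1(c_4 t)=2$) both $e^{-c_6 t}$ and $\int_0^{2c_3}f(s)^{d-1}e^{-c_5 tf(s)^{-\alpha}}\,ds \le Ce^{-c_5 tf(0)^{-\alpha}}$ are dominated by $C'e^{-\lambda_D t}$. The constants from the two regimes are then unified by taking the more restrictive choice in each bound, giving a single admissible tuple $(c_i)$ valid for all $t\ge c_1 t_0(y)$.

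The principal difficulty is thus bookkeeping: reconciling the constants across the moderate and large $t$ regimes, and ensuring simultaneous compatibility with the inner exponential rates appearing in the $\max$. The essential new analytic input — intrinsic ultracontractivity plus a positive spectral gap — is available as a black box under our hypotheses, so this proposition does not require any new probabilistic estimates beyond the two lemmas already established.
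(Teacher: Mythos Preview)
Your proposal is correct and follows essentially the same route as the paper's proof: Lemmas \ref{l4-2} and \ref{l4-1} handle the bounded range $c_1 t_0(y)\le t\le T_0$, while intrinsic ultracontractivity (via \cite[Theorem 5]{Kw}) together with the ground state comparison $\phi_1\simeq\phi$ (via \cite[Theorem 6.1]{CKW}) yields $p_D(t,x,y)\simeq\phi(x)\phi(y)e^{-\lambda_D t}$ for $t\ge T_0$, after which the constants are reconciled into the $\max$ form. The only difference is presentational: the paper invokes \cite[Theorem 4.2.5]{Da} directly for the large-time two-sided bound, whereas you sketch the semigroup decomposition and spectral expansion yourself; note that ``retaining only the first term of the $L^2$-spectral expansion'' does not by itself give a lower bound (the higher eigenfunction products can have either sign), so the clean justification is precisely the Davies result that IU forces $e^{\lambda_D t}p_D(t,x,y)/(\phi_1(x)\phi_1(y))\to 1$ uniformly.
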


\begin{proof}  Since the function $g(s)\simeq f(s)^\alpha\log (1+s)$ is non-increasing with $\lim_{s\to\infty}g(s)=0$,
$s_1(t)=2$
for $t>0$ large enough.
Thus, by Lemmas \ref{l4-2} and \ref{l4-1}, we only need to verify the required assertion for all $t\ge c_0$ with any given $c_0>0$.

According to \cite[Theorem 5]{Kw}, the associated Dirichlet
 semigroup $(P_t^D)_{\ge0}$ is intrinsically ultracontractive when $\lim_{y\in D,|y|\rightarrow
 \infty}t_0(y)=0$.  Hence, it
 follows from \cite[Theorem 4.2.5]{Da} that for all
$t\ge c_0$ and $x,y\in D$,
 $p_D(t,x,y) \simeq  e^{-\lambda_D
t}\phi_1(x)\phi_1(y)$
where $\phi_1(x)$ is the ground state (i.e., the first strictly positive eigenfunction corresponding to the smallest eigenvalue $\lambda_D$ of the Dirichlet fractional Laplacian $(-\Delta) ^\alpha|_D$) of the semigroup
$(P_t^D)_{\ge0}$. On the other hand, by \cite[Theorem 6.1]{CKW}
and its proof, for all $x\in D$,
$\phi_1(x)\simeq  \phi(x)=
{\delta_D(x)^{\alpha/2}f(x_1)^{\alpha/2} }{(1+|x|)^{-d-\alpha}}.$
Putting both estimates together, we can obtain the desired
assertion.
\end{proof}

\subsection{\bf{Case III-2: $g$ is
non-decreasing on $(0,\infty)$.}}
In this part, we are concerned
with
the case that
 $g$ is
non-decreasing on $(0,\infty)$. In particular, $\lim_{s\to \infty} g(s)>0$.
Because of $t_0(y)\simeq
f(y_1)^\alpha\log(2+|y|)$, we have
$\liminf\limits_{y\in D,|y|\rightarrow \infty}t_0(y)>0$.

\begin{lemma}\label{l4-4}{\bf (Lower bound)}\,\,
Suppose that $g$ is
non-decreasing on $(0,\infty)$. Then
there exist constants $c_{{\ac{1}}}>0$ large enough and $c_{\ac{2}}, c_{\ac{3}}>0$
such that for every $y\in D$ with $t\ge
c_{\ac{1}}t_0(y)\ge 1$,
\begin{equation}\label{l4-4-1}
\begin{split}
p_D(t,x,y)\ge  &c_{\ac{2}}e^{-c_{\ac{3}}t}
\phi(x)\phi(y).
\end{split}
\end{equation}
\end{lemma}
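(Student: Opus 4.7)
The plan is to route the process from $x$ to $y$ through a \emph{fixed} compact subdomain of $D$, mimicking the chain argument of Lemma~\ref{l4-2} but with the intermediate integration region taken independent of $t$. Under the present hypothesis that $g$ is non-decreasing with $\lim_{s\to\infty}g(s)>0$, one has $\inf_{y\in D}t_0(y)>0$, so the joint assumption $c_{\ac{1}}t_0(y)\ge 1$ can be realized by taking $c_{\ac{1}}$ sufficiently large; restricting the integration to a compact region on which $f$ is bounded below degrades the $z$-dependent exponential factor $e^{-c'tf(z_1)^{-\alpha}}$ in Proposition~\ref{p3-1}(i) into an absolute exponential $e^{-c''t}$, matching the rate in~\eqref{l4-4-1}. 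Note that the argument of Lemma~\ref{l4-2} itself fails here because $s_1(c_4t)=g^{-1}(c_4t)\vee 2$ is infinite once $c_4t>\lim g$.

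Fix an absolute $K_0>0$ and set $\widetilde D_0:=\widetilde D\cap\{z\in D:z_1\le K_0\}$, where $\widetilde D$ is the tube introduced in the proof of Lemma~\ref{l4-2} (so $\delta_D(z)\gtrsim f(z_1)$ on $\widetilde D$), giving $\delta_D(z),f(z_1)\gtrsim 1$ and $|\widetilde D_0|\asymp 1$ uniformly for $z\in\widetilde D_0$. First I would dispense with the case $|y|\le M$ (for a suitable absolute $M>2K_0$) by a single application of Proposition~\ref{p3-1}(i) to $p_D(t,x,y)$, exactly as in the openings of the proofs of Lemmas~\ref{l4-2} and~\ref{l4-1}; since $\Psi(t,y)$, $f(y_1)$, and $1+|y|$ are then all bounded away from zero and infinity, the bound $c\,e^{-c't}\phi(x)\phi(y)$ is immediate. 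For the main case $|y|>M$, so that $|x|\ge|y|/c>2K_0$ as well, I would apply Proposition~\ref{p3-1}(i) to each factor $p_D(t/3,x,z)$ and $p_D(t/3,z,y)$ for $z\in\widetilde D_0$; the hypothesis $C_0f(\min(x_1,z_1))^\alpha\le t/3$ reduces via $f\le f(0)$ to the absolute threshold $t\ge 3C_0 f(0)^\alpha$, absorbed into $c_{\ac{1}}$. Uniformly for $z\in\widetilde D_0$ this gives
\begin{align*}
p_D(t/3,x,z)\,p_D(t/3,z,y)&\ge c\,p(t/3,x,z)\,p(t/3,z,y)\,\Psi(t/3,x)\,\Psi(t/3,y)\,\Psi(t/3,z)^2\,e^{-c'tf(z_1)^{-\alpha}}\\
&\ge c\,p(t/3,x,z)\,p(t/3,z,y)\,\Psi(t/3,x)\,\Psi(t/3,y)\,e^{-c'' t},
\end{align*}
using $\Psi(t/3,z)\gtrsim 1$ and $f(z_1)\ge f(K_0)$ on $\widetilde D_0$, with $c''=c'f(K_0)^{-\alpha}$ an absolute constant.

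Integrating over $\widetilde D_0$ and using $|x-z|\asymp 1+|x|$, $|y-z|\asymp 1+|y|$ together with the standard two-sided estimate for $p$ yields
\[
p_D(2t/3,x,y)\ \ge\ c\,\Psi(t/3,x)\,\Psi(t/3,y)\,\frac{R(t,x,y)\,e^{-c'' t}}{(1+|x|)^{d+\alpha}\,(1+|y|)^{d+\alpha}},
\]
where $R(t,x,y)$ is the polynomial-in-$t$ factor coming from the two-sided global estimate; in the regime $t\le (1+|y|)^\alpha\wedge(1+|x|)^\alpha$ one has $R(t,x,y)\asymp t^2$, while in the complementary regime $t>(1+|x|)^\alpha$ one has $R(t,x,y)\asymp t^{-2d/\alpha}(1+|x|)^{d+\alpha}(1+|y|)^{d+\alpha}$, which together with $(1+|x|)^\alpha\le t$ is still bounded below by $e^{-\varepsilon t}$ for any preassigned $\varepsilon>0$ and $t$ exceeding an absolute threshold. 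Combining with $\Psi(t/3,\cdot)\simeq\delta_D(\cdot)^{\alpha/2}f(\cdot_1)^{\alpha/2}$ for $t\ge 1$ and relabeling $2t/3\to t$ produces $p_D(t,x,y)\ge c_{\ac{2}}e^{-c_{\ac{3}}t}\phi(x)\phi(y)$. The main obstacle I foresee is precisely this absorption step for $R(t,x,y)$ in the large-$t$ regime $t>(1+|x|)^\alpha$: the loss from trading a polynomial prefactor for a small additional exponential rate forces $c_{\ac{3}}$ to be strictly larger than $c''$, and the argument must be balanced carefully so that $c_{\ac{3}}$ remains an absolute constant independent of $x,y,t$.
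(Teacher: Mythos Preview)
Your argument is correct, and in spirit it matches the paper's: both proofs route the process through a fixed compact subdomain near the origin and absorb the resulting constant-order exponential loss into the target $e^{-c_{\ac{3}}t}$. The execution differs, however. You invoke Proposition~\ref{p3-1}(i) as a black box for both legs $p_D(t/3,x,z)$ and $p_D(t/3,z,y)$ with $z$ in a fixed tube $\widetilde D_0$, then observe that $f(z_1)\ge f(K_0)$ and $\Psi(t/3,z)\asymp 1$ on $\widetilde D_0$ so the $z$-dependent exponential collapses to $e^{-c''t}$; the polynomial loss $R(t,x,y)$ you flag in the regime $t^{1/\alpha}>1+|x|$ is genuinely only polynomial in $t$ and is harmlessly absorbed into the exponential, as you say. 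The paper instead builds the one-leg estimate $p_D(t,x,w)\ge c\,\phi(x)e^{-ct}$ for $w$ in a fixed ball $B(x_0,\lambda_1)$ from scratch, via a L\'evy-system jump from a neighborhood $V_x$ of $x$ into $B(x_0,\lambda_1)$ combined with the known large-time Dirichlet estimate \eqref{hk-s1} on the ball (through \eqref{l3-3-5}), and only uses \eqref{l3-3-2} for the boundary factor at $x$. Your route is shorter and leans on a heavier earlier result; the paper's route is more self-contained and avoids the polynomial-absorption step entirely because the ball estimate already delivers the clean $e^{-ct}$ factor.
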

\begin{proof}
We choose $c_1>C_0C_{3.10}$ and $M>20$  large enough so that
$c_1t_0(y)\ge 2\vee C_0f(y_1)^\alpha$ and, that if $|y|  >M$ then
$|x| \ge 2|y|/3$.
Note that, for $|y|\le M$ and $t\ge 2$,
$$
t^{-d/\alpha} \wedge \frac{t}{|x-y|^{d+\alpha}} \ge c_2
\frac{t^{-d/\alpha}}{(|x|+1)^{d+\alpha}}
\ge c_2
\frac{t^{-d/\alpha}(M+1)^{d+\alpha}}{(|x|+1)^{d+\alpha}(|y|+1)^{d+\alpha}}.
$$
By Proposition \ref{p3-1}(i),
for every $y\in D$ with $|y|\le M$ and $t\ge c_1t_0(y)$,
\begin{align*}
p_D(t,x,y)&\ge c_3
\frac{t^{-d/\alpha}\delta_D(x)^{\alpha/2}f(x_1)^{\alpha/2}\delta_D(y)^{\alpha/2}f(y_1)^{\alpha/2}}{(|x|+1)^{d+\alpha}(|y|+1)^{d+\alpha}}e^{-c_4tf(M)^{-\alpha}}\ge c_5 \phi(x)\phi(y)e^{-2c_4tf(M)^{-\alpha}}.
\end{align*}
Thus, \eqref{l4-4-1} holds if $|y|\le M$.

Next, we assume that  $|y| > M$. Fix a ball $B(x_0,4\lambda_1)\subset D$ with $x_0\in D$
such that $|x_0|\le 6$ and $\lambda_1>0$.
As shown in the beginning of the proof for Lemma \ref{l3-3}, there
are $\xi_x,\xi_y\in D$ and $\lambda_2>0$ such that
$B(\xi_x,4\lambda_2f(x_1))\subset D$,
$B(\xi_y,4\lambda_2f(y_1))\subset D$, and \eqref{l3-3-2} holds
true with $V_x:=B(\xi_x,\lambda_2f(x_1))$ and
$V_y:=B(\xi_x,\lambda_2f(y_1))$.

On the other hand,
we find that for all $z\in V_x$, $w\in
B(x_0,\lambda_1)$ and $t\ge c_1t_0(y)\ge 1$,
\begin{align*}
& p_D(t,z,w)
=\Ee^z\left[p_D\left({t}/{2},X_{t/2}^D,w\right)\right]\\
&\ge \Ee^z\Big[p_D(t/2, X_{t/2},w) :0<\tau_{B(z,\lambda_2
f(x_1))}<2^{-2}C_0
f(x_1)^\alpha, X_{\tau_{B(z, \lambda_2 f(x_1))}}\in B(w, \lambda_1),\\
&\qquad\quad X_s\in B(w,2\lambda_1)\text{ for all }s\in [\tau_{B(z,\lambda_2 f(x_1))}, \tau_{B(z,\lambda_2 f(x_1))}+t]\Big]\\
&\ge c_{6} e^{-c_{7}t} \left(\int_0^{2^{-2}C_0
f(x_1)^\alpha}\int_{B(z,
\lambda_2 f(x_1))}
p_{B(z,\lambda_2 f(x_1))}(s,z,u)\int_{B(w,\lambda_1)}\frac{1}{|u-v|^{d+\alpha}}\,dv\,du\,ds\right)\\
&\qquad\quad\times\inf_{u\in B(w,\lambda_1)} \Pp^u \left(\tau_{B(u,\lambda_1)}>t\right)\\
&\ge c_{8} e^{-c_{9}t} f(x_1)^\alpha\Pp^z(\tau_{B(z,\lambda_2
f(x_1))}>{C_0f(x_1)^\alpha}/{4})
\frac{1}{(1+|x|)^{d+\alpha}}\ge c_{10} e^{-c_{9}t}\frac{f(x_1)^\alpha}{(1+|x|)^{d+\alpha}},
\end{align*}
where the second inequality follows from
L\'evy system \eqref{e2-6} and the fact that
\begin{align*}
\inf_{w,v\in B(x_0,3\lambda_1)}p_D\big({t}/{2},w,v\big) &\ge
\inf_{w,v\in
B(x_0,3\lambda_1)}p_{B(x_0,4\lambda_1)}\big({t}/{2},w,v\big)\ge
c_6e^{-c_7t},\quad t\ge 1
\end{align*}
thanks to \eqref{hk-s1},
the third inequality is due to
\eqref{l3-3-5} (also by \eqref{hk-s1}) and the fact that $|u-v|\le c_{11}(1+|x|)$ for all $u\in
B(z,\lambda_2f(x_1))$ with $z\in V_x$ and $v\in B(w,\lambda_1)$, and in the last
inequality we  have used \eqref{l5-1-2}.

Combining the estimate above with \eqref{l3-3-2} yields that for all
$w\in B(x_0,\lambda_1)$ and $t\ge 2c_1t_0(y)\ge 1$,
\begin{align*}
p_D(t,x,w)&\ge
\int_{V_x}p_D(2^{-2}C_0
f(x_1)^\alpha,x,z)
p_D(t-2^{-2}C_0
f(x_1)^\alpha,z,w)\,dz\\
&\ge c_{12}\delta_D(x)^{\alpha/2}f(x_1)^{-\alpha/2}e^{-c_{13}t}
\frac{f(x_1)^\alpha}{(1+|x|)^{d+\alpha}}
= c_{12}\phi(x)e^{-c_{13}t}.
\end{align*}
Similarly, for every $w\in B(x_0,\lambda_1)$ and $t\ge 2c_1t_0(y)\ge 1$,
$
p_D(t,y,w)\ge
c_{14} \phi(y)e^{-c_{15}t}.$
Hence, for all $t\ge 2c_1t_0(y)\ge 1$,
\begin{align*}
p_D(t,x,y)&\ge \int_{B(x_0,\lambda_1)}
p_D(t/2,x,w)p_D(t/2,w,y)\,dw
\ge c_{16}e^{-c_{17}t} \phi(x) \phi(y).
\end{align*}
Now we have proved the desired assertion.
\end{proof}

\begin{lemma}\label{l4-3}{\bf (Upper bound)}\,\,
Suppose that
$g$ is
non-decreasing on $(0,\infty)$.
Then,
there exist constants $c_{\ac{1}}, c_{\ac{2}}, c_{\ac{3}}>0$
such that for all $t>0$ and $y\in D$ with $t\ge
c_{\ac{1}}
t_0(y) (\ge 1)$,
\begin{equation}\label{l4-3-1}
p_D(t,x,y)\le  c_{\ac{2}}e^{-c_{\ac{3}}t}
 \phi(x) \phi(y).
\end{equation}
\end{lemma}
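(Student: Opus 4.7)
The proof will mirror that of Lemma \ref{l4-1} (the upper bound in Case III-1), with a single structural simplification: the hypothesis $t\ge c_{\ac{1}}\,t_0(y)\ge 1$ puts us squarely in the large-time regime, so Lemma \ref{e:lm2} provides the uniform exponential survival bound
\[
\Pp^z(\tau_D>s)\le c\,\Psi(s,z)\,e^{-c_{2.9.2}\,s},\qquad z\in D,\ s\ge 1,
\]
in place of the polynomial-in-$|z|$ bound $(s/(1+|z|)^{d+\alpha-1})^q$ that drove the iteration in \eqref{l4-1-4}--\eqref{l4-1-6}. Note also that $\Psi(s,z)\simeq \delta_D(z)^{\alpha/2}f(z_1)^{\alpha/2}$ for $s\ge 1$.

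I would first dispose of the bounded case $|y|\le M$ with $M$ large: since $f(y_1)\ge f(M)>0$, Proposition \ref{p3-1}(ii) directly yields
\[
p_D(t,x,y)\le c\,p(t,x,y)\,\Psi(t,x)\,\Psi(t,y)\,e^{-c' t f(M)^{-\alpha}}\le c''\,\phi(x)\phi(y)\,e^{-c_3 t},
\]
with the factor $(1+|y|)^{-(d+\alpha)}$ required in $\phi(y)$ being absorbed into the constant, exactly as in \eqref{l4-1-0}.

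For $|y|>M$ chosen large enough that $|x|\ge 2|y|/3$, I would run the iteration of \eqref{l4-1-4}--\eqref{l4-1-6} with every occurrence of $(s/(1+|z|)^{d+\alpha-1})^q$ replaced by $e^{-c_{2.9.2}s}$. The key one-step estimate becomes, for $|z|\ge |y|/2$ and $s\ge 1$,
\[
p_D(2s,z,u)\le c\,\Psi(s,z)\Big((1+|y|)^{dp}\,e^{-c_{2.9.2}\,s}\,\vee\,s\,|z-u|^{-(d+\alpha)}\Big),
\]
where the first alternative (for $|u|\ge |z|/4$) uses $\sup p_D(s,\cdot,\cdot)\le cs^{-d/\alpha}\le c(1+|y|)^{dp}$ together with the new survival bound, and the second (for $|u|<|z|/4$) comes from Lemma \ref{l:new} once $|z-u|\ge 3|z|/4\ge (Ns)^{1/\alpha}$. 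Iterating $N$ times, with $c_{\ac{1}}$ chosen large enough (depending on $N$ and $c_{2.9.2}$) that the accumulated exponentials dominate the $(1+|y|)^{Ndp}$ amplifications, produces $p_D(Nt,z,u)\le c\,\Psi(t,z)(1+|z|)^{-(d+\alpha)}\,e^{-c_4 t}$. A symmetric Chapman--Kolmogorov step at level $(N+1)t$, combined with $\int_D p_D(t,u,v)\,dv\le c\,\Psi(t,u)\,e^{-c_{2.9.2}t}$ in place of the function $K(t,z)$ of \eqref{l4-1-6}, then gives the desired $p_D(t,x,y)\le c\,\phi(x)\phi(y)\,e^{-c_3 t}$ after rescaling $2(N+1)t\mapsto t$.

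The main obstacle will be the bookkeeping in the iteration: $c_{\ac{1}}$ must be fixed large enough (in terms of $N$ and $c_{2.9.2}$) so that the $N$ accumulated factors $e^{-c_{2.9.2}t}$ not only leave a clean $e^{-c_3 t}$ after rescaling but also overcome the $(1+|y|)^{Ndp}$ amplifications coming from the sup-of-kernel bound; the required trade-off is available because $t\ge c_{\ac{1}}\,t_0(y)\simeq c_{\ac{1}}\,f(y_1)^\alpha\log(2+|y|)$ converts any $e^{-c_{2.9.2}t}$ into an arbitrarily large negative power of $(1+|y|)$ once $c_{\ac{1}}$ is chosen large. In contrast to Case III-1, no integral $\int f(s)^{d-1}e^{-ct f(s)^{-\alpha}}\,ds$ survives in the final bound, since under non-decreasing $g$ that integral is controlled by $e^{-c' t}$ up to constants.
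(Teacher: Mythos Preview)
Your simplification has a genuine gap. The uniform exponential bound $\Pp^z(\tau_D>s)\le c\,\Psi(s,z)\,e^{-c_{2.9.2}s}$ carries no spatial decay in $|z|$ beyond $\Psi$, and that is precisely what the iteration in \eqref{l4-1-4}--\eqref{l4-1-5} relies on. In Case~III-1 the branch $|u|\ge |z|/4$ picks up a factor $(t/(1+|u|)^{d+\alpha-1})^q\le c(t/(1+|z|)^{d+\alpha-1})^q$ at each step, and these accumulate to $(1+|z|)^{-(d+\alpha)}$. If you replace this by $e^{-c_{2.9.2}t}$ (constant in $u$), each iteration only multiplies by another $e^{-c_{2.9.2}t}$; after $N$ steps the $|u|\ge |z|/4$ branch gives $c\,\Psi(t,z)\,e^{-Nc_{2.9.2}t}$, with no power of $(1+|z|)$ at all. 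Your claimed output $p_D(Nt,z,u)\le c\,\Psi(t,z)(1+|z|)^{-(d+\alpha)}e^{-c_4t}$ therefore fails whenever $(1+|z|)^{d+\alpha}\gg e^{Nc_{2.9.2}t}$. Since the hypothesis $t\ge c_{\ac{1}}t_0(y)$ places no upper bound on $|x|$ (under non-decreasing $g$ one has $t_0(x)\gtrsim t_0(y)$, not $\lesssim$), the regime $|x|\gg e^{ct}$ is live, and there your argument cannot produce the factor $(1+|x|)^{-(d+\alpha)}$ inside $\phi(x)$. The trade-off you invoke---that $e^{-c_{2.9.2}t}$ becomes an arbitrary negative power of $(1+|y|)$---is also incorrect: it would need $c_{2.9.2}t\ge K\log(1+|y|)$, but $t\ge c_{\ac{1}}t_0(y)\simeq c_{\ac{1}}f(y_1)^\alpha\log(2+|y|)$ only delivers a coefficient $c_{\ac{1}}c_{2.9.2}f(y_1)^\alpha$, which tends to $0$ as $|y|\to\infty$. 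Finally, even if the iteration worked, your Chapman--Kolmogorov step with $K(t,u)=e^{-c_{2.9.2}t}$ (or $\Psi(t,u)e^{-c_{2.9.2}t}$) gives $\int_D K(t,u)^2\,du=\infty$ in general (e.g.\ when $g$ is bounded, $f(s)^\alpha\simeq 1/\log s$ and $\int f(s)^{d-1+2\alpha}\,ds$ diverges).

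The paper's proof is accordingly much more elaborate: it introduces two spatial scales $s_2(t)\simeq e^{t/C}$ and $s_3(t)$ via \eqref{l4-3-3}, refines the survival estimate into the trichotomy \eqref{l4-3-4} (using $e^{-cf(z_1)^{-\alpha}t}$, not just $e^{-ct}$, for $|z|\ge s_3(t)$, and the polynomial $t(1+|z|)^{-(d+\alpha-1)}$ in the middle range), derives three different kernel envelopes $L_1,L_2,L_3$ in \eqref{l4-3-7}, \eqref{l4-3-11}, \eqref{l4-3-8}, and then treats five cases depending on where $|x|$ and $|y|$ sit relative to $s_2(t),s_3(t)$. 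The factor $(1+|x|)^{-(d+\alpha)}$ for very large $|x|$ comes from the off-diagonal bound of Lemma~\ref{l:new} combined with the sharper exponential $e^{-cf(x_1)^{-\alpha}t}$ (cf.\ \eqref{l4-3-9}), neither of which your uniform $e^{-ct}$ can substitute for.
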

\begin{proof} Since $t_0(z)\simeq
f(z_1)^\alpha \log(2+|z|)$ and the function $s\mapsto g(s)\simeq f(s)^\alpha \log(2+s)$ is non-decreasing, we have $t_0(z)\ge
c_0>0$ for all
$z\in D$.
Choose $M_0>20$
and $c_1>C_0C_{3.10}$ large enough so that $|x| \ge 2|y|/3$ for every $|y|>M_0$,
$f(z_1)\le f(y_1)$ for every $|z|\ge 2|y|>2M_0$,
and
$c_1t_0(y)\ge 2\vee C_0f(y_1)^\alpha$.

Since $t_0(y)\ge c_0>0$,
 for  every $y\in D$ with $|y|\le M_0$ and $t\ge c_1t_0(y)$,
$$
t^{-d/\alpha} \wedge \frac{t}{|x-y|^{d+\alpha}}
\simeq  \frac{t}{(t^{1/\alpha}+|x-y|)^{d+\alpha}}
\le \frac{c_2t}{(1+|x|)^{d+\alpha}},
$$ so, by applying   Proposition \ref{p3-1}(ii), we get that  for  every $y\in D$ with $|y|\le M_0$ and $t\ge c_1t_0(y)$,
\begin{align*}
 p_D(t,x,y)
&\le
c_3{\delta_D(x)^{\alpha/2}f(x_1)^{\alpha/2}} {\delta_D(y)^{\alpha/2}f(y_1)^{\alpha/2}} \frac{te^{-c_4 tf(M_0)^{-\alpha}}}{(1+|x|)^{d+\alpha}}  \le  c_5
\phi(x)\phi(y) \
e^{-c_4tf(M_0)^{-\alpha}/2}.
\end{align*}
Thus, we only need to consider the case that $|y|>M_0$ and $t\ge c_1t_0(y)$. For this, we will split the proof into two parts.

(1) For any $t>0$, define
\begin{equation}\label{l4-3-3}
\begin{split}
s_2(t):=&
\sup\{s>0: C_{5.14.1}\log(2+s)\le t\}\vee 2M_0,\\
 s_3(t):
=&\sup\{s>0: C_{5.14.2}\,
g(s)\le t\}\vee s_2(t),
\end{split}
\end{equation}
where
we use the convention that
$\sup \emptyset =0$. It is easy to see that $s_3(t)\ge s_2(t)>0$, and
the constants $C_{5.14.1}$, $C_{5.14.2}$ (both of which are large) are to be determined later.

Again by the
assumption that
the function $s\mapsto g(s)\simeq f(s)^\alpha \log(2+s)$ is non-decreasing on
$(0,\infty)$, $t_0(z)\simeq f(z_1)^\alpha \log(2+|z|)$ and the definition of $s_3(t)$, (by choosing
$c_1$, $C_{5.14.1}$, $C_{5.14.2}$ large enough if necessary,) we can find
a positive constant $c_6$
such that for every $t\ge c_1t_0(y)$,
\begin{align}\label{e:qq}
t\le  c_6 t_0(z) \text{ when } |z|\ge s_3(t)/2, \quad \text{and}
\quad
t\ge33
t_0(z)  \text{ when } |z|\le 8 s_3(t).
\end{align}
This along with Lemma \ref{e:lm2} yields that for all $z\in D$ and $t\ge c_1t_0(y)$,
\begin{equation}\label{l4-3-4}\begin{split}
 \Pp^z(\tau_D>t)
&\le
 c_7 \Psi(t,z)\min\bigg\{e^{- c_{2.9.2}  f(z_1)^{-\alpha}t}+\frac{t}{(1+|z|)^{d+\alpha-1}},
e^{- c_{2.9.2} t}\bigg\}
\\
&\le c_7\Psi(t,z)\times
\begin{cases}
e^{- c_{2.9.2} f(z_1)^{-\alpha}t}+
\frac{c_6t_0(z)}{(1+|z|)^{d+\alpha-1}}, & \ \ |z|\ge s_3(t)\\
e^{- c_{2.9.2}
f(z_1)^{-\alpha}t_0(z)}
+\frac{t}{(1+|z|)^{d+\alpha-1}},  &\ \ s_2(t)/2\le|z|\le4 s_3(t)\\
e^{- c_{2.9.2} t},  & \ \ |z|\le 2s_2(t)
\end{cases}\\
&\le c_8\Psi(t,z)\times
\begin{cases}
e^{- c_{2.9.2}f(z_1)^{-\alpha}t} +c_6e^{- c_{2.9.2}t_0(z)f(z_1)^{-\alpha}}
& \ \ |z|\ge s_3(t)\\
\frac{t_0(z)}
{(1+|z|)^{d+\alpha-1}}
+\frac{t}{(1+|z|)^{d+\alpha-1}},  &  \ \ s_2(t)/2\le |z|\le 8 s_3(t) \\
e^{- c_{2.9.2} t},  & \ \ |z|\le 2s_2(t)
\end{cases}\\
&\le c_9\Psi(t,z)\times
\begin{cases}
e^{-c_{10}f(z_1)^{-\alpha}t}, & \ \ |z|\ge s_3(t) \\
\frac{t}
{(1+|z|)^{d+\alpha-1}}
&  \ \ s_2(t)/2\le |z|\le 8 s_3(t) \\
e^{- c_{2.9.2}t},  & \ \ |z|\le 2s_2(t),
\end{cases}
\end{split}\end{equation}
where in the last inequality we have used
\eqref{e:qq}.
Thus, for all $t\ge c_1t_0(y)$,
$u,z\in D$ with $|z|\ge s_3(t)$ and $N\ge 2$,
\begin{equation}\label{l4-3-4a}
\begin{split}
p_D(Nt,z,u)&=\int_D p_D(t,z,v)p_D((N-1)t,v,u)\,dv\le c_{11}((N-1)t)^{-d/\alpha}\int_D p_D(t,z,v)\,dv\\
&\le c_{12}\Pp^z(\tau_D>t)\le c_{13}\Psi(t,z)e^{-c_{10}f(z_1)^{-\alpha}t}.
\end{split}
\end{equation}

Below, we will further refine the estimate above.
For every $|u|\ge
2|z|$, $|z|\ge s_3(t)\ge2$ and $N\ge 2$, we have
\begin{align*}
p_D((N+1)t,z,u)=\left(\int_{\{v\in D: |v-z|\le |u|/4\}}+\int_{\{v\in D: |v-z|> |u|/4\}} \right)p_D(t,z,v)p_D(Nt,v,u)\,dv=:I_1+I_2.
\end{align*}
On the one hand, for $|z|\ge s_3(t)$ and $|u|\ge 2|z|$,
\begin{align*}
I_1&\le c_{14}\int_{\{v\in D: |v-z|\le |u|/4\}}p_D(t,z,v)\frac{Nt}{|v-u|^{d+\alpha}}\,dv\\
&\le \frac{c_{15}t}{(1+|u|)^{d+\alpha}}\int_D p_D(t,
z,v)\,dv\le
c_{16}\Psi(t,z)\frac{t}{(1+|u|)^{d+\alpha}}e^{-c_{10}f(z_1)^{-\alpha}t},
\end{align*}
where the second inequality follows from the fact that
$|v-u|\ge|u|-|v-z|-|z|\ge |u|/4$ for all $v\in D$ with $|v-z|\le
|u|/4$, and in the last inequality we have used
$\eqref{l4-3-4}$. On the other hand, for
$|z|\ge s_3(t)$ and  $|u| \ge 2|z|$,
\begin{align*}
I_2&\le c_{17}\Psi(t,z)\int_{\{v\in D: |v-z|> |u|/4\}}
\frac{t}{|z-v|^{d+\alpha}}p_D(Nt,v,u)\,dv\le c_{18}\Psi(t,z)\frac{t}{(1+|u|)^{d+\alpha}}e^{-c_{10}f(u_1)^{-\alpha}t},
\end{align*}
where the first inequality follows from
Lemma \ref{l:new}
(since $|z-v|\ge |u|/4\ge |z|/2\ge s_3(t)/2
\ge s_2(t)/2\ge t^{1/\alpha}$ for all $t\ge c_1t_0(y)$ by taking $c_1$ large enough if necessary),
and
 in the second inequality we used
\eqref{l4-3-4}, $\Pp^z(\tau_D>Nt)\le \Pp^z(\tau_D>t)$ and the fact $|u|\ge 2|z|\ge 2s_3(t)$.
Combining with both estimates above, we arrive at that for all
$z,u\in D$ with $|z|\ge s_3(t)$ and $|u|\ge 2|z|$,
\begin{equation}\label{l4-3-5a}
p_D((N+1)t,z,u)\le c_{19}\Psi(t,z)e^{-c_{10}f(z_1)^{-\alpha}t}
\frac{t}{(1+|u|)^{d+\alpha}},
\end{equation} where we used the fact that $f(u_1)\le f(z_1)$ for $|u|\ge 2|z|>2M_0$ due to
the choice of $M_0$.

Meanwhile, for all $z,u\in
D$ with $|z|\ge s_3(t)$ and $|u|\le |z|/2$,  we have
\begin{align*}
p_D((N+1)t,z,u)=\left(\int_{\{v\in D: |v-z|\le |z|/4\}}+\int_{\{v\in D: |v-z|> |z|/4\}}\right)p_D(Nt,z,v)p_D(t,v,u)\,dv=:J_1+J_2.
\end{align*}
Then, replacing \eqref{l4-3-4} by $\Pp^z(\tau_D>t)\le c_{20}\Psi(t,z)e^{-c_{2.9.2}t}$
(due to \eqref{l3-1-0}) and following the arguments above for $I_1$ and $I_2$, we can obtain immediately that for all $z,u\in
D$ with $|z|\ge s_3(t)$ and $|u|\le |z|/2$, and for all $N\ge 2$,
\begin{equation*}
p_D((N+1)t,z,u)\le c_{21}\Psi(t,z)e^{-c_{2.9.2}t}\frac{t}{(1+|z|)^{d+\alpha}}.
\end{equation*}

Therefore, putting all the cases together, we finally get that for
any $N\ge 2$ and $z,u\in D$ with $|z|\ge s_3(t)$,
\begin{equation}\label{l4-3-7}
p_D((N+1)t,z,u)\le c_{22}\Psi(t,z)L_1(z,u,t),
\end{equation}
where
\begin{align*}
L_1(z,u,t)=
\frac{te^{-c_{2.9.2}t}}{(1+|z|)^{d+\alpha}}\I_{\{|u|\le |z|/2\}}+e^{-c_{10}f(z_1)^{-\alpha}t}\I_{\{|z|/2\le |u|\le 2|z|\}}+\frac{te^{-c_{10}f(z_1)^{-\alpha}t}}{(1+|u|)^{d+\alpha}}\I_{\{|u|>
2|z|\}}.
\end{align*}
Note that, for the case $|z|/2\le |u|\le 2|z|$ above, we used \eqref{l4-3-4a} directly.

Similarly, replacing \eqref{l4-3-4} by $\Pp^z(\tau_D>t)\le c_{23}\Psi(t,z)e^{-c_{2.9.2}t}$ and following the argument for  \eqref{l4-3-7},
we  can obtain that for
every
$N\ge2$ and $u,z\in D$ with $|z|\le
4s_2(t)$,
\begin{equation}\label{l4-3-8}
p_D((N+1)t,z,u) \le c_{24}\Psi(t,z)L_3(z,u,t),
\end{equation}
where
\begin{align*}
L_3(z,u,t)=&
\frac{te^{-c_{2.9.2}t}}{(1+|z|)^{d+\alpha}}\I_{\{|u|\le |z|/2\}}+e^{-c_{2.9.2}t}\I_{\{|z|/2\le |u|\le 2|z|\}}+e^{-c_{2.9.2}t}\frac{t}{(1+|u|)^{d+\alpha}}\I_{\{|u|> 2|z|\}}.
\end{align*}
In particular,
by choosing $C_{5.14.1}$ large enough
so that $e^{-{c_{2.9.2}t}/{2}}\le c_{24}{t}{(1+|z|)^{-(d+\alpha)}}$ for
every $|z|\le 4s_2(t)$,
it holds that
\begin{equation}\label{e:notess}L_3(z,u,t)\le c_{25} e^{-c_{2.9.2}t/2}
\frac{t}{(1+|z|)^{d+\alpha}}.
\end{equation}

Next, let  $u,z\in D$ with $8s_3(t)\ge |z|\ge   s_2(t)/2$.
Then, by \eqref{l4-3-4},
\begin{equation*}
\Pp^z(\tau_D>t)\le c_9\Psi(t,z)\frac{t}{(1+|z|)^{d+\alpha-1}},
\quad s_2(t)/2\le |z|\le 8s_3(t).
\end{equation*}
Hence, for every $t\ge c_1t_0(y)$ and $u,z\in D$ with $8s_3(t)\ge |z|\ge   s_2(t)/2$,
\begin{equation}\label{l4-3-5}
\begin{split}
p_D(2t,z,u)\le & \begin{cases}
\displaystyle \int_Dp_D(t,z,v)p_D(t,v,u)\,dv \le c_{27}\Pp^z(\tau_D>t) &|u|\ge |z|/4\\
c_{26}\Psi(t,z){t}{|z-u|^{-(d+\alpha)}} \le c_{26}\Psi(t,z){t}{|z-u|^{-(d+\alpha)}},&|u|<|z|/4\end{cases}\\
\le&\begin{cases} c_{28}\Psi(t,z)
{t}{(1+|z|)^{-(d+\alpha-1)}}
, &|u|\ge |z|/4\\
c_{27.5}\Psi(t,z)t(1+|z|)^{-(d+\alpha)},&|u|<|z|/4\end{cases}\\
\le& c_{28}\Psi(t,z)\frac{t}{(1+|z|)^{d+\alpha-1}},
\end{split}
\end{equation}
where the first inequality is due to Lemma \ref{l:new} (since $|z-u|\ge {3|z|}/{4}\ge {3s_2(t)}/{8}
\ge t^{1/\alpha}$ by choosing $c_1$ large enough if necessary), in the second inequality we have used that
$p_D(t,u,v)\le p(t,u,v)\le c_{29}t^{-d/\alpha}\le c_{29}$ for every
$t\ge c_1t_0(y)$($ \ge 1$).

Now,
applying \eqref{l4-3-5}
and following
the same iteration arguments for  \eqref{l4-1-5},
we can find an integer $M\ge 3$ such that for all $u,z\in D$ with $8s_3(t)\ge |z|\ge   s_2(t)/2$,
\begin{equation}\label{l4-3-6}
p_D((M-1)
t,z,u)\le c_{30}\Psi(t,z)\frac{t}{(1+|z|)^{d+\alpha}}.
\end{equation}
Then, according
to  \eqref{l4-3-6} and \eqref{l4-3-4},
for every $t\ge c_1t_0(y)$ and every $u,z\in D$ with
$s_2(t)/2\le |z|\le 4s_3(t)$ and
$s_2(t)/2\le |u|\le 8s_3(t)$,
\begin{equation}\label{l4-3-6a}
\begin{split}
p_D(Mt
,z,u)&=\int_{D}p_D((M-1)t,z,v)p_D(t
,v,u)\,dv\\
&\le c_{30}\frac{\Psi(t,z)t}{(1+|z|)^{d+\alpha}}\int_D p_D(t,v,u)\,dv\le c_{31}\frac{\Psi(t,z)t}{(1+|z|)^{d+\alpha}}\frac{t}{(1+|u|)^{d+\alpha-1}}.
\end{split}
\end{equation}

Meanwhile, following the same arguments for \eqref{l4-3-5a}, (in particular,
applying
$$\Pp^z(\tau_D>t)\le c_{9}\frac{\Psi(t,z)t}{(1+|z|)^{d+\alpha-1}}$$ in the estimate of $I_1$ for every
$s_2(t)/2\le |z|\le 4s_3(t)$, and $$\Pp^u(\tau_D>t)\le c_8\left(e^{-c_{2.9.2}f(u_1)^{-\alpha}t}+\frac{t_0(u)}{(1+|u|)^{d+\alpha-1}}\right)$$
in the estimate of $I_2$ for every $|u|\ge 8s_3(t)$,
which are due to the last and the third inequalities in \eqref{l4-3-4} respectively,)
we can obtain that for every $u,z\in D$ with
$s_2(t)/2\le |z|\le 4s_3(t)$ and $|u|\ge 8s_3(t)\ge 2|z|$,
\begin{align*}
p_D(Mt,z,u)&\le \frac{c_{32}\Psi(t,z)t}{(1+|u|)^{d+\alpha}}\left(\frac{t}{(1+|z|)^{d+\alpha-1}}+
\frac{t_0(u)}{(1+|u|)^{d+\alpha-1}}+e^{-c_{2.9.2}f(u_1)^{-\alpha}t}\right)\\
&\le \frac{c_{33}\Psi(t,z)t}{(1+|u|)^{d+\alpha}}\left(\frac{t}{(1+|z|)^{d+\alpha-1}}+
\frac{\log(2+|u|)}{(1+|u|)^{d+\alpha-1}}+e^{-c_{2.9.2}f(z_1)^{-\alpha}t_0(z)}\right)\\
&= \frac{c_{33}\Psi(t,z)t}{(1+|u|)^{d+\alpha}}\left(\frac{t+t_0(z)}{(1+|z|)^{d+\alpha-1}}+
\frac{\log(2+|u|)}{(1+|u|)^{d+\alpha-1}}\right)\le \frac{c_{34}\Psi(t,z)t}{(1+|z|)^{d+\alpha}}
\frac{t+\log(2+|u|)}{(1+|u|)^{d+\alpha-1}}.
\end{align*}
Here in the second inequality we have used the facts that $t_0(u)\le c_{35}\log(2+|u|)$,
$f(u_1)\le f(z_1)$ (which is due to $|u|\ge 2|z|\ge 2s_2(t)\ge 2M_0$) and $t\ge t_0(z)$ (which is due to
\eqref{e:qq}), and the last inequality follows from $|u|\ge 2|z|$.

Following the same arguments above for \eqref{l4-3-6a}, and using \eqref{l4-3-6} as well as \eqref{l4-3-4}, we can obtain that
for every $u,z\in D$ with
$s_2(t)/2\le |z|\le 4s_3(t)$
and $|u|\le s_2(t)$,
\begin{align*}
p_D(Mt,z,u)&\le \frac{c_{36}\Psi(t,z)t}{(1+|z|)^{d+\alpha}}e^{-c_{2.9.2}t}.
\end{align*}

Combining all above estimates together, we know that there exists $M\ge 3$ such that
for all $u,z\in D$ with $ s_2(t)/2\le |z|\le 4s_3(t)$  and $t\ge c_1t_0(y)$,
\begin{equation}\label{l4-3-11}
p_D(Mt,z,u)\le c_{37}\Psi(t,z)L_2(z,u,t),
\end{equation}
where
$$
L_2(z,u,t) = e^{-c_{2.9.2}t}\frac{t}{(1+|z|)^{d+\alpha}}\I_{\{|u|\le
s_2(t)\}}+ \frac{t}{(1+|z|)^{d+\alpha}}
\frac{t+\log(2+|u|)}{(1+|u|)^{d+\alpha-1}}\I_{\{|u|\ge s_2(t)\}}.
$$

(2)
According to
\eqref{e:qq},
(by taking  $c_1$
large enough
if necessary), we have $|y|\le s_3(t)$ when $t\ge c_1t_0(y)$.
Now, we will prove the desired upper bounds of $p_D(t,x,y)$ for
$|y|>M_0$ and $t\ge c_1t_0(y)$.
We first note that, since $t\ge c_1t_0(y)\ge 2\vee C_0f(y_1)^\alpha\ge 2\vee C_0f(x_1)^\alpha$, we have
$$
\frac{\Psi(t,x)}{(1+|x|)^{d+\alpha}} \simeq \phi(x) \quad \text{and}\quad
\frac{\Psi(t,y)}{(1+|y|)^{d+\alpha}} \simeq \phi(y).
$$
We consider the following five cases separately.

(i) Case 1:
$s_2(t)\le |y|\le s_3(t)$ and $s_2(t)/2\le |x|\le
4s_3(t)$
(since $|x|\ge {2|y|}/{3}$ for all $|y|>M_0$).
In this case, letting  $C_{5.14.1}>4/c_{2.9.2}$, we get from
\eqref{l4-3-11} that
\begin{align*}
& p_D(2Mt,x,y)
=\int_D p_D(Mt,x,u)p_D(Mt,u,y)\,du\le c_{38}\Psi(t,x)\Psi(t,y)\int_D L_2(x,u,t)L_2(y,u,t)\,du\\
&= c_{38}\Psi(t,x)\Psi(t,y)\frac{t}{(1+|x|)^{d+\alpha}}
\frac{t}{(1+|y|)^{d+\alpha}}\\
&\quad \times \left(e^{-2c_{2.9.2}t} \int_{ \{u \in D: |u| \le s_2(t)\}}  du+
\int_{\{u \in D:|u| \ge s_2(t)\}} \frac{(t+\log(2+|u|))^2}{(1+|u|)^{2(d+\alpha-1)}}du \right)\\
&\le c_{39}
\phi(x)\phi(y) t^2 \left(e^{-2c_{2.9.2}t}s_2(t)+(t+\log s_2(t))^2s_2(t)^{-2d-2\alpha+3}\right)\le c_{40}
\phi(x)\phi(y)
e^{-c_{41}t},
\end{align*}
where in the last inequality we used the facts that  $s_2(t)
\le
e^{t/C_{5.14.1}}$
for large $t$
and $C_{5.14.1}>4/c_{2.9.2}$.

(ii) Case 2: $s_2(t)\le |y|\le s_3(t)$ and $|x|\ge 4s_3(t)$.
In this case,
we write
\begin{align*}
p_D(2Mt,x,y)=&
\left(\int_{\{u\in D: |u|<|x|/2\}}
+\int_{\{u\in D:  |x|/2\le |u|\le 2|x|\}}+
\int_{\{u\in D: |u|> 2|x|\}} \right)
\\
&\quad\times
p_D(Mt,x,u)p_D(Mt,u,y)\,du
=:H_1+H_2+H_3.
\end{align*}
By  \eqref{l4-3-7} and \eqref{l4-3-11}, for all $t\ge c_1t_0(y)$,
\begin{align*}
H_1&\le c_{42}\Psi(t,x)\Psi(t,y)\int_{\{u\in D: |u|<|x|/2\}}
L_1(x,u,t)L_2(y,u,t)\,du\\
&\le c_{43}\Psi(t,x)\Psi(t,y) e^{-c_{2.9.2}t}\frac{t}{(1+|x|)^{d+\alpha}}
 \left( \left(
\int_{\{u\in D:  |u| \le s_2(t)\}}+\int_{\{u\in D: |u| \ge s_2(t)\}}\right)
L_2(y,u,t)\,du \right)\\
&\le c_{44}
\frac{\Psi(t,x)t}{(1+|x|)^{d+\alpha}}\frac{\Psi(t,y)t}{(1+|y|)^{d+\alpha}}
e^{-c_{2.9.2}t}\left(e^{-c_{2.9.2}t}s_2(t)+
\int_{s_2(t)}^{\infty}\frac{t+\log(2+s)}{(1+|s|)^{d+\alpha-1}}ds\right)\\
&\le c_{45}
\phi(x)\phi(y)t^2
e^{-c_{2.9.2}t}\left(e^{-c_{2.9.2}t}s_2(t)+
(t+\log s_2(t))s_2(t)^{-d-\alpha+2}\,\right)\le c_{46}
\phi(x)\phi(y)e^{-c_{47}t},
\end{align*}
where in the last inequality we have used the fact that $s_2(t)\le
e^{t/C_{5.14.1}}$ for large $t$
and
we have chosen
$C_{5.14.1} > 4/c_{2.9.2}$.
On the other hand,
\begin{align*}
&H_2\le c_{48}\Psi(t,y)\int_{\{u\in D:  |x|/2\le |u|\le 2|x|\}}
p_D(Mt,x,u)\frac{t}{|u-y|^{d+\alpha}}\,du\le c_{49}\Psi(t,y)\frac{t}{(1+|x|)^{d+\alpha}}\Pp^x(\tau_D> t)\\
&\le c_{50}\Psi(t,x)\Psi(t,y)\frac{t}{(1+|x|)^{d+\alpha}}e^{-c_{51}f(x_1)^{-\alpha}t}\le c_{52}
\phi(x)\phi(y)
e^{-c_{53}t},
\end{align*}
where the first inequality follows from Lemma \ref{l:new} and the fact that
$|y|\le s_3(t)\le |x|/4$ and so
$|u-y|\ge |u|-|y|\ge |x|/4 \ge s_3(t)
\ge t^{1/\alpha}$
for all $u\in D$ with $|x|/2\le |u|\le 2|x|$ (by taking $c_1$
large
enough if necessary), in the second inequality we have used again the fact $|u-y|\ge |x|/4$,
in the third inequality we have applied \eqref{l4-3-4}, and
in the last inequality we have used the facts that $f(x_1)\le f(y_1)$, and
 for every
$y\in D$ and $t\ge c_1t_0(y)$ with large enough $c_1>0$,
\begin{equation}\label{l4-3-9}
e^{-c_{51}f(x_1)^{-\alpha}t/2}\le e^{-c_1c_{51}f(y_1)^{-\alpha}t_0(y)/2}=
\left(\frac{t_0(y)
}{(1+|y|)^{d+\alpha-1}}\right)^{
c_1c_{51}/(2c_{2.9.2})
}
\le \frac{(c_1^{-1}t)^{
c_1c_{51}/(2c_{2.9.2})
}}{(1+|y|)^{d+\alpha}}.
\end{equation}
Furthermore, applying \eqref{l4-3-7} and \eqref{l4-3-11}, we can
easily verify
\begin{align*}
H_3&\le c_{54}\Psi(t,x)\Psi(t,y)\int_{\{u\in D: |u|\ge 2|x|\}}L_2(y,u,t)L_1(x,u,t)\,du\\
&\le c_{55}\frac{\Psi(t,x)}{(1+|x|)^{d+\alpha}}
\frac{\Psi(t,y)}{(1+|y|)^{d+\alpha}} t^2e^{-c_{56}f(x_1)^{-\alpha}t}\int_{\{u\in D: |u|\ge 2|x| \ge 8 s_2(t)\}}\frac{(t+\log (2+|u|))}{(1+|u|)^{d+\alpha-1}}du\\
&\le c_{57}
\phi(x)\phi(y)
e^{-c_{58}t}\int_{2|x|}^{\infty}\frac{1+\log(2+s)}{(1+|s|)^{d+\alpha-1}}
\,ds\le c_{59}
\phi(x)\phi(y)
e^{-c_{58}t},
\end{align*} where in the third inequality we used the facts that
$t^2e^{-c_{56}f(x_1)^{-\alpha}t}
\le c_{60} e^{-c_{58}t}.$
Therefore, according to all estimates for $H_1$, $H_2$ and $H_3$, we
can  obtain the desired conclusion in this case.

(iii) Case 3: $|y|\le s_2(t)$ and $s_2(t)/2\le |x|\le 4s_3(t)$.
According to
\eqref{l4-3-8}, \eqref{e:notess} and \eqref{l4-3-11}, we have
\begin{align*}
p_D(2Mt,x,y)&=\int_D p_D(Mt,x,u)p_D(Mt,u,y)\,du\\
&\le c_{61}\Psi(t,y)e^{-c_{2.9.2}t/2}\Psi(t,x)\frac{t}{(1+|y|)^{d+\alpha}}
\int_D L_2(x,u,t)\,du\le c_{62}
\phi(x)\phi(y)
e^{-c_{63}t},
\end{align*}
where in the last inequality we used the fact
that
$\int_D L_2(x,u,t)\,du\le c_{64}{\Psi(t,x)}{(1+|x|)^{-d-\alpha}}e^{-c_{65}t}$
(that has been verified
in the proof of cases (i) and (ii) above).

(iv) Case 4: $|y|\le s_2(t)$ and $|x|\ge 4s_3(t)$. Define $H_1$, $H_2$ and  $H_3$ as those in case (ii). According to
\eqref{l4-3-7}, we arrive at
\begin{align*}
H_1&\le c_{66}\Psi(t,x)\int_{\{u\in D: |u|\le |x|/2\}}L_1(x,u,t)p_D(Mt,u,y)
\,du\le c_{67}\frac{\Psi(t,x)}{(1+|x|)^{d+\alpha}}te^{-c_{2.9.2}t}\Pp^y(\tau_D>t)\\
&\le c_{68}
\phi(x)
\Psi(t,y) te^{-2c_{2.9.2}t}
\le
c_{69}
\phi(x)\phi(y)
t e^{-c_{2.9.2}t} \le
c_{70}
\phi(x)\phi(y)
e^{-c_{2.9.2}t/2},
\end{align*}
where the third inequality is due to
\eqref{l3-1-0}, and in the fouth inequality we have used
the fact that given $\frac{d+\alpha}{C_{5.14.1}}\le c_{2.9.2}$ (by choosing
$C_{5.14.1}$ large enough if necessary), it holds
\begin{equation}\label{l4-3-12}
\frac{1}{(1+|y|)^{d+\alpha}}\ge c_{71}s_2(t)^{-d-\alpha}\ge c_{71}e^{-\frac{d+\alpha}{C_{5.14.1}}t}\ge c_{71}e^{
-c_{2.9.2}t} \quad \text{for every }|y|\le s_2(t).
\end{equation}

Following
the arguments in case (ii)
and using \eqref{l4-3-8}, \eqref{l4-3-12} instead of \eqref{l4-3-11},
we also can obtain the desired estimates
for $H_2$ and $H_3$.

(v) Case 5: $|y|\le s_2(t)$ and $|x|\le s_2(t)$. According to
\eqref{e:notess} and \eqref{l3-1-0}, we arrive at
\begin{align*}
&p_D(2Mt,x,y)=\int_D p_D(Mt,x,u)p_D(Mt,u,y)\,du\\
&\le c_{72}\Psi(t,x)e^{-c_{2.9.2}t/2}\frac{t}{(1+|x|)^{d+\alpha}}\Pp^y\left(\tau_D>t\right)\le c_{73}
\phi(x)\Psi(t,y)e^{-c_{2.9.2}t}\le c_{74}\phi(x)\phi(y)e^{-c_{75}t},
\end{align*}
where the last step follows from \eqref{l4-3-12}.

Therefore, by all the conclusions above and the definition of $\Psi(t,x)$, we complete the proof.
\end{proof}

Putting Lemmas \ref{l4-4} and \ref{l4-3} together, we obtain
\begin{proposition}\label{p5-6}
Suppose that $g$ is
non-decreasing on $(0,\infty)$. Then,
there exists a constant $c_{\ac{1}}>0$ large enough
such that for all $x,y\in D$ and $t\ge
c_{\ac{1}}t_0(y)\ge 1$,
\begin{align*}
c_{\ac{2}}
\phi(x)\phi(y)
e^{-c_{\ac{3}}t}
\le p_D(t,x,y)\le
c_{\ac{4}}
\phi(x)\phi(y)
e^{-c_{\ac{5}} t},
\end{align*} where $c_{\ac{i}}$ $(i=2,\cdots, 5)$ are independent of
$t$, $x$ and $y$.
\end{proposition}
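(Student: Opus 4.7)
The proposition is a straightforward combination of Lemma \ref{l4-4} (lower bound) and Lemma \ref{l4-3} (upper bound), both of which were already established under exactly the same hypothesis that $g$ is non-decreasing on $(0,\infty)$. My plan is therefore simply to harvest those two lemmas and reconcile the constants.

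More precisely, I would fix $c_{\ac{1}}$ to be the maximum of the constant $c_{5.14.1}$ appearing in Lemma \ref{l4-3} and the constant $c_{5.15.1}$ appearing in Lemma \ref{l4-4}, chosen large enough so that $c_{\ac{1}} t_0(y) \ge 1$ for all $y \in D$ (this is possible because $\inf_{y\in D} t_0(y) > 0$, which follows from $t_0(y) \simeq f(y_1)^{\alpha} \log(2+|y|)$ together with the non-decreasing property of $g$; indeed this gives $t_0(y) \ge g(0) \wedge \inf_{|y|\le 1} t_0(y) > 0$). With this choice, for any $(x,y,t)$ satisfying $t \ge c_{\ac{1}} t_0(y)\ge 1$, Lemmas \ref{l4-3} and \ref{l4-4} both apply and yield the desired two-sided bound with constants $c_{\ac{2}}, c_{\ac{3}}$ (lower) and $c_{\ac{4}}, c_{\ac{5}}$ (upper) independent of $x, y, t$.

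One last point to address is the standing convention from the beginning of Section 3 that $x_1 \ge y_1$: since $p_D(t,x,y)$ is symmetric in $(x,y)$, the conclusion extends to arbitrary $x,y\in D$ by swapping the two variables if necessary. Because the conclusion is stated in terms of $t_0(y)$ alone while the hypothesis $t \ge c_{\ac{1}} t_0(y)$ fixes a choice of reference point, no additional argument is needed: for any ordered pair one simply applies the result to that pair, and the only requirement on the two lemmas was the lower bound $t \ge c_{\ac{1}} t_0(y)$ with $y$ being the point relative to which survival probability estimates were carried out. There is essentially no obstacle at this stage; all the analytical work (the iteration scheme in Lemma \ref{l4-3}, the chaining through $V_x,V_y$ and $B(x_0,\lambda_1)$ in Lemma \ref{l4-4}) has already been completed.
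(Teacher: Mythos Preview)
Your proposal is correct and matches the paper's own proof essentially verbatim: the paper simply states ``Putting Lemmas \ref{l4-4} and \ref{l4-3} together, we obtain'' Proposition \ref{p5-6}, with no further argument. Your additional remarks on reconciling the threshold constants and on the $(x,y)$-symmetry are harmless elaborations of what the paper leaves implicit.
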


\section{Further remarks for Theorem \ref{Main}}

Theorem \ref{Main} immediately follows from
Proposition \ref{p2-1}, Proposition \ref{p3-1}, Proposition \ref{l4-3o} and Proposition \ref{p5-6}
in the previous three sections.

Below, we present one more example to further illustrate Theorem \ref{Main}.

\begin{example} Let $f(s)=(1+s)^{-\theta}$ with $\theta>0$ for all $s\in[0,\infty)$.  For any $x,y\in D$, set $t_1(x,y)=(1+(|x|\wedge |y|))^{-\theta \alpha}$ and $t_2(x,y)= (1+(|x|\wedge |y|))^{-\theta \alpha}\log(2+(|x|\wedge |y|))$. Then
there exist positive constants $c_{6.1.1}$, $c_{6.1.2}$ and $c_{6.1.3}$ such that for all $x,y\in D$,
\begin{align*}
&p_D(t,x,y)\asymp  \\
 &\begin{cases}\displaystyle p(t,x,y)\left( \frac{\delta_D(x)^{\alpha/2}}{\sqrt t} \wedge 1  \right)
\left( \frac{\delta_D(y)^{\alpha/2}}{\sqrt t} \wedge 1  \right)\\
\qquad\qquad\qquad\qquad \qquad\qquad\qquad\qquad\qquad\qquad\qquad\qquad \text{for all }0<t\le c_{6.1.1}t_1(x,y); \\[2pt]
\displaystyle p(t,x,y) \frac{\delta_D(x)^{\alpha/2}(1+|x|)^{-\theta\alpha/2}}{t}\frac{\delta_D(y)^{\alpha/2}(1+|y|)^{-\theta\alpha/2}}{t} \exp(-t(1+(|x|\wedge|y|))^{\theta\alpha})&\\
\qquad\qquad\qquad\qquad \qquad\qquad\qquad\qquad\qquad\qquad\qquad\qquad \text{for all } c_{6.1.1}t_1(x,y)< t\le c_{6.1.2}t_2(x,y);\\[2pt]
\displaystyle  \frac{\delta_D(x)^{\alpha/2}(1+|x|)^{-\theta\alpha/2}}{(1+|x|)^{d+\alpha}}\frac{\delta_D(y)^{\alpha/2}
(1+|y|)^{-\theta\alpha/2}}{(1+|y|)^{d+\alpha}}F(t)
,& \\
\qquad\qquad\qquad\qquad \qquad\qquad\qquad\qquad\qquad\qquad\qquad\qquad \text{for all } c_{6.1.2}t_2(x,y)< t\le c_{6.1.3};\\[2pt]
\displaystyle
\frac{\delta_D(x)^{\alpha/2}(1+|x|)^{-\theta\alpha/2}}{(1+|x|)^{d+\alpha}}\frac{\delta_D(y)^{\alpha/2}
(1+|y|)^{-\theta\alpha/2}}{(1+|y|)^{d+\alpha}}
\exp(-t), \\
\qquad\qquad \qquad\qquad\qquad\qquad\qquad\qquad\qquad\qquad\qquad\qquad \text{for all } t> 1,
\end{cases}
\end{align*}where
$F(t)=
(1 \vee t^{-\frac{1+\theta(1-d)}{\theta \alpha}})\I_{ \{\theta \not=\frac{1}{d-1}\}}+
  \log (1+t^{-1})\I_{\{\theta=\frac{1}{d-1}\}}.$
\end{example}
\begin{proof}
For the reference $f$ given in the example, the associated Dirichlet semigroup $(P_t^D)_{t\ge0}$ is intrinsically ultracontractive. We note that for
$s_0(t)$ and $s_1(t)$ defined in the proof of Example \ref{exam},
  $s_0(t)\simeq
 t^{-1/(\theta\alpha)}$ {and} $s_1(t)\simeq
 t^{-1/(\theta\alpha)}\log^{1/(\theta\alpha)}(1+t^{-1}).$
  Hence,
$$
  \int_0^{c_1s_0(t)}f(s)^{d-1}\,ds \simeq
  F(t) \quad
 \text{ and }\quad
 \int_{c_1s_0(t)}^{c_2s_1(t)} f(s)^{d-1}e^{-c_3tf(s)^{-\alpha}}\,ds
 \le c_4F(t).$$ Then, the assertion follows from Theorem \ref{Main}.\end{proof}

Finally, we present one additional remark on the reference function $f$ in Theorem \ref{Main}.

\begin{remark} In the proof of Theorem \ref{Main}(2)(i), the condition $f(s)\ge c(1+s)^{-p}$
is only required to derive upper bounds of $p_D(t,x,y)$ when $c_2 (t_0(x)\vee t_0(y))\le t \le c_3$ involved in the estimate \eqref{e:hk3}.
Indeed, by carefully tracking  the proofs in Section
5.1,
without
the conditions $f(s)\ge c(1+s)^{-p}$ and $\lim_{s\to \infty}f(s)^\alpha\log(2+s)=0$,
one can still obtain  two sided bounds for $p_D(t,x,y)$ in this special time-space region
with the assumption  $\lim_{y\in D, |y|\to \infty}t_0(y)=0$.
For example, if $f(s)=\exp(-c_0(1+s)^\kappa)$ for
 some $c_0>0$ and $\kappa>0$, then for any $x,y\in D$ and $c_2 (t_0(x)\vee t_0(y))\le t \le c_3$,
 \begin{align*}
 p_D(t,x,y)&\asymp \phi(x)\phi(y)\int_{0}^{s_1(t)}
f(s)^{d-1}e^{-tf(s)^{-\alpha}}\,ds \asymp \delta_D(x)^{\alpha/2}f(x_1)^{\alpha/2}
\delta_D(y)^{\alpha/2}f(y_1)^{\alpha/2}.
 \end{align*}
In particular, the term  $(1+|x|)^{-d-\alpha}
 (1+|y|)^{-d-\alpha}$ arising from $\phi(x)\phi(y)$ in \eqref{e:hk3} and respecting the spatial decay disappears in this case, since it is
 absorbed into
 the boundary decay term $f(x_1)^{\alpha/2}f(y_1)^{\alpha/2}$.
  \end{remark}
\ \

\noindent \textbf{Acknowledgements.}
 We thank the anonymous referee for his/her helpful comments.
The research of Xin Chen is supported by the
National Natural Science Foundation of China (Nos.\ 11501361 and 11871338).\
The research of Panki Kim is supported by
 the National Research Foundation of Korea (NRF) grant funded by the Korea government (MSIP)
(No.\ 2016R1E1A1A01941893).\
 The research of Jian Wang is supported by the National
Natural Science Foundation of China (Nos.\ 11831014 and 12071076), the Program for Probability and Statistics: Theory and Application (No.\ IRTL1704) and the Program for Innovative Research Team in Science and Technology in Fujian Province University (IRTSTFJ).

\ \


\begin{thebibliography}{99}
\bibitem{B}  Ba\~nuelos, R.:
Sharp estimates for Dirichlet eigenfunctions in simply connected domains,
\emph{J. Differential Equations} \textbf{125} (1996), 282--298.

\bibitem{BB}
Ba\~{n}uelos, R. and van den Berg, M.: Dirichlet eigenfunctions for horn-shaped regions and Laplacians on cross sections, \emph{J. London Math. Soc.} \textbf{53} (1996), 503--511.

\bibitem{BD} Ba\~{n}uelos, R. and Davis, B.: Sharp estimates for Dirichlet eigenfunctions in horn-shaped regions, \emph{Comm. Math. Phys.} \textbf{150} (1992), 209--215. Erratum: \emph{Comm. Math. Phys.} \textbf{162} (1994), 215--216.

\bibitem{Be}
 van den Berg, M: On the spectrum of the Dirichlet Laplacian for horn-shaped regions in $\R^n$ with infinite volume, \emph{J. Funct. Anal.} \textbf{58} (1984), 150--156.

\bibitem{BG} Blumenthal, R.M. and  Getoor, R.K.: Some theorems on stable processes, \emph{Trans. Amer. Math.
Soc.} \textbf{9} (1960), 263--273.

\bibitem{BGR0} Bogdan, K., Grzywny, T. and Ryznar, M.: Heat kernel estimates for the fractional Laplacian
with Dirichlet conditions, \emph{Ann. Probab.} \textbf{3} (2010), 1901--1923.


\bibitem{BK} Bogdan, K., Grzywny, T. and Ryznar, M.: Dirichlet heat kernel for unimodal L\'evy processes,
\emph{Stoch. Proc. Appl.} \textbf{124} (2014), 3612--3650.

\bibitem{CKW}
Chen, X., Kim, P. and Wang, J.: Intrinsic ultracontractivity and ground state estimates
of non-local Dirichlet forms on unbounded open sets,  \emph{Comm. Math. Phys.} \textbf{366} (2019) 67--117.


\bibitem{CWi14} Chen, X. and Wang, J.: Intrinsic ultracontractivity of general L\'{e}vy processes on bounded open sets, \emph{Illinois J. Math.} \textbf{58} (2014), 1117--1144.

\bibitem{CW16}  Chen, X. and Wang, J.: Intrinsic ultracontractivity of Feynman-Kac semigroups for symmetric jump processes,
 \emph{J. Funct. Anal.} \textbf{270} (2016), 4152--4195.



\bibitem{CKS1} Chen, Z.-Q., Kim, P. and Song, R.: Heat kernel estimates for Dirichlet fractional Laplacian,
\emph{J. Eur. Math.
Soc.} \textbf{12} (2010), 1307--1329.

\bibitem{CKS-1} Chen, Z.-Q., Kim, P. and Song, R.:
Two-sided heat kernel estimates for censored stable-like processes,
\emph{Probab. Theory Relat. Fields} \textbf{146} (2010), 361--399.

\bibitem{CKS0} Chen, Z.-Q., Kim, P. and Song, R.:
 Sharp heat kernel estimates for relativistic stable
processes in open sets, \emph{Ann. Probab.} \textbf{40} (2012), 213--244.

\bibitem{CKS-2} Chen, Z.-Q., Kim, P. and Song, R.: Dirichlet heat kernel estimates for $\Delta^{\alpha/2}+\Delta^{\beta/2}$,
\emph{Ill. J. Math.} \textbf{54} (2010), 1357--1392.

\bibitem{CKS-3} Chen, Z.-Q., Kim, P. and Song, R.: Heat kernel estimates for $\Delta+\Delta^{\alpha/2}$ in $C^{1,1}$ open sets,
\emph{J. London Math. Soc.} \textbf{84} (2011), 58--80.


\bibitem{CKS-u1} Chen, Z.-Q., Kim, P. and Song, R.: Global heat kernel estimate for relativistic stable processes in exterior open sets,
\emph{J. Funct. Anal.} \textbf{263} (2012), 448--475.

\bibitem{CKS-u2} Chen, Z.-Q., Kim, P. and Song, R.: Global heat kernel estimate for relativistic stable processes in half-space-like open sets,
\emph{Potential Anal.} \textbf{36} (2012), 235--261.

\bibitem{CKS-u3} Chen, Z.-Q., Kim, P. and Song, R.: Global heat kernel estimates for $\Delta+\Delta^{\alpha/2}$ in half-space-like domains,
\emph{Eletronic J. Probab.} \textbf{17} (2012), Paper 32, 1--31.

\bibitem{CKS} Chen, Z-.Q., Kim, P. and Song, R.:
Dirichlet heat kernel estimates for rotationally symmetric L\'evy processes,
\emph{Proc. Lond. Math. Soc.} \textbf{109} (2014), 90--120.


\bibitem{CKS3} Chen, Z-.Q., Kim, P. and Song, R.:
Dirichlet heat kernel estimates for subordinate Brownian motions
with Gaussian components, \emph{J. reine angew. Math.} \textbf{711}
(2016), 111--138.


\bibitem{CKS-u4} Chen, Z-.Q. and Kim, P.: Global Dirichlet heat kernel estimates for symmetric L\'evy processes in half-space,
\emph{Acta Appl. Math.} \textbf{146} (2016), 113--143.

\bibitem{CK} Chen, Z.-Q. and Kumagai, T.: Heat kernel estimates for stable-like processes on $d$-sets,
\emph{Stoch. Proc. Appl.} \textbf{108} (2003), 27--62.

\bibitem{CS98} Chen, Z.-Q. and Song, R.:
Intrinsic ultracontractivity and conditional gauge for symmetric stable processes,
\emph{J. Funct. Anal.} \textbf{150} (1997), 204--239.

\bibitem{CT} Chen, Z.-Q. and Tokle, J.:
Global heat kernel estimates for fractional Laplacians in unbounded open sets,
\emph{Probab. Theory Relat. Fields} \textbf{149} (2011), 373--395.


\bibitem{CKSV} Cho, S., Kim, P., Song, R. and Vondra\v{c}ek, Z.: Factorization and estimates of Dirichlet heat kernels for non-local operators with critical killings.
\emph{J. Math. Pures Appl.} \textbf{143} (2020), 208--256.


\bibitem{CL}
Cranston, M. and Li, Y.:
 Eigenfunction and harmonic function estimates in domains with horns and cusps,  \emph{Comm. Partial Differential Equations}
  \textbf{22} (1997), 1805--1836.


\bibitem{Da} Davies, E.B.: \emph{Heat Kernels and Spectral Theory},
Cambridge Univ. Press, Cambridge, 1998.


\bibitem{DS} Davies, E.B. and
Simon, B.: Ultracontractivity and heat kernels for Schr\"odinger
operators and Dirichlet Laplacians, \emph{J. Funct. Anal.}
\textbf{59} (1984), 335--395.

\bibitem{GKK}
Grzywny, T., Kim, K. and Kim, P.:
Estimates of Dirichlet heat kernel for symmetric Markov processes,
\emph{Stoch. Proc. Appl.} \textbf{130} (2020), 431--470.

\bibitem{GS}
Gyrya, P. and Saloff-Coste, L.: Neumann and Dirichlet heat kernels in inner uniform domains, \emph{Ast\'{e}risque} \textbf{336} (2011), viii+144 pp


\bibitem{Ki} Kim, K.: Global heat kernel estimates for symmetric Markov processes dominated by stable-like processes in exterior $C^{1,\eta}$ open sets,
\emph{Potential Anal.} \textbf{43} (2015), 127--148.


\bibitem{KK} Kim, K. and Kim, P.: Two-sided estimates for the transition densities of
symmetric Markov processes dominated by stable-like
processes in $C^{1,\eta}$ open sets, \emph{Stoch. Proc. Appl.} \textbf{124} (2014), 3055--3083.

\bibitem{K}
Kulczycki, T.: Intrinsic ultracontractivity for symmetric stable processes, \emph{Bull. Polish Acad. Sci. Math.} \textbf{46} (1998), 325--334.

\bibitem{Kw}
Kwa\'{s}nicki, M.:
Intrinsic ultracontractivity for stable semigroups on unbounded open sets,
\emph{Potential Anal.} \textbf{31} (2009), 57--77.



\bibitem{LPZ}
Lindeman, A., Pang, M.H. and Zhao, Z.: Sharp bounds for ground state eigenfunctions on domains
with horns and cusps, \emph{J. Math. Anal. Appl.} \textbf{212} (1997), 381--416.


\bibitem{Var}
Varopoulos N.T.:
Gaussian estimates in Lipschitz domains,
\emph{Canad. J. Math.} \textbf{55} (2003), 401--431.


\end{thebibliography}
\end{document}